\newtheorem{teo}{Theorem}[section]
\newtheorem{lem}[teo]{Lemma} 
\newtheorem{cor}[teo]{Corollary}
\newtheorem{defn}[teo]{Definition} 
\newtheorem{ex}[teo]{Example}
\newtheorem*{claim*}{Claim}
\newtheorem{rem}[teo]{Remark}
\begin{document}

\title{On algebraic extensions and algebraic closures of superfields}

\author{Kaique Matias de Andrade Roberto\thanks{Institute of Mathematics and Statistics, University of São Paulo, Brazil. kaique.roberto@usp.br} \\  Hugo Rafael de Oliveira Ribeiro\thanks{Institute of Mathematics and Statistics, University of São Paulo, Brazil. hugor@ime.usp.br} \\
Hugo Luiz Mariano\thanks{Institute of Mathematics and Statistics, University of São Paulo, Brazil. hugomar@ime.usp.br}}

\maketitle

\begin{abstract}
    Building over recent results, we expand the basic theory of algebraic extensions to the realm of superfields -a field with multivalued sum and product-, showing that every superfield has a (unique up to isomorphism) full algebraic extension to a superfield that is algebraically closed. Moreover we show that  every infinite algebraically closed superfield admits quantifier elimination procedure.
\end{abstract}

\section{Introduction}

The concept of multialgebraic structure -- an ``algebraic like'' structure but endowed with  multiple valued operations -- has 
been studied since the 1930's; in particular, the concept of hyperrings was introduced by Krasner in the 1950's. Some general 
algebraic study has been made on multialgebras: see for instance \cite{golzio2018brief} and \cite{pelea2006multialgebras}.

Since the middle of the 2000s decade, the notion of multiring have obtained more attention: a multiring is a lax hyperring, satisfying an weak distributive law, but  hyperfields and multifields coincide. Multirings   has been studied for  applications many areas: in abstract quadratic forms theory 
(\cite{marshall2006real}, \cite{worytkiewiczwitt2020witt}, \cite{roberto2021quadratic}), tropical geometry (\cite{viro2010hyperfields}, \cite{jun2015algebraic}), algebraic geometry ((\cite{jun2021geometry}, \cite{baker2021descartes}), valuation theory (\cite{jun2018valuations}), Hopf algebras (\cite{eppolito2020hopf}), etc (\cite{baker2021structure}, \cite{ameri2020advanced}, \cite{ameri2017multiplicative}, \cite{bowler2021classification}).

A more detailed account of 
variants of concept of polynomials over hyperrings is even more recent (\cite{jun2015algebraic}, \cite{ameri2019superring}, \cite{baker2021descartes}). In \cite{roberto2021superrings}
 we start a model-theoretic oriented analysis of multialgebras  introducing the class of algebraically closed hyperfields and 
providing variant proof of quantifier elimination flavor,   based on new results on  superring of polynomials.

In the present work we provide new steps the program of studying the hyperfields (and natural variants: superfields) under a natural notion of algebraic extension and roots of polynomials - this shares some common features with the recent work in \cite{baker2021descartes} - we show that every superfield has a (unique up to isomorphism) full algebraic extension to a superfield that is algebraically closed (Theorems \ref{uniq}, \ref{algclos}).

The next steps in this program are a development of Galois theory and Galois cohomology theory, envisaging application to other mathematical theories as abstract structures of quadratic forms and real algebraic geometry (\cite{ribeiro2016functorial},\cite{roberto2021quadratic},\cite{roberto2021ktheory}, \cite{roberto2021hauptsatz}).


\section{Multirings, Hyperfields}

\begin{defn}[Adapted from definition 1.1 in \cite{marshall2006real}]\label{defn:multimonoid}
 An \textbf{abelian or commutative multigroup} is a first-order structure  $(G,\cdot,r,1)$ where $G$ is a non-empty set, 
$r:G\rightarrow G$ is a function, $1$ is an element of $G$, $\cdot \subseteq G\times G\times G$ is a ternary relation 
(that will play the role of binary multioperation, we denote $d\in a\cdot b$ for $(a,b,d)\in\cdot$) such that for all 
$a,b,c,d\in G$:
 \begin{description}
 \item [M1 - ] If $c\in a\cdot b$ then $a\in c\cdot(r(b))\wedge b\in(r(a))\cdot c$. We write $a\cdot b^{-1}$ to simplify 
$a\cdot(r(b))$.
 \item [M2 - ] $b\in a\cdot1$ iff $a=b$.
 \item [M3 - ] If $\exists\,x(x\in a\cdot b\wedge t\in x\cdot c)$ then
$\exists\,y(y\in b\cdot c\wedge t\in a\cdot y)$.
 \item [M4 - ] $c\in a\cdot b$ iff $c\in b\cdot a$.
\end{description}
The structure $(G,\cdot,1)$ is a \textbf{commutative multimonoid (with unity)} if satisfy M3 and M4 and the condition 
$a\in1\cdot a$ for all $a\in G$.
\end{defn}

\begin{defn}[Adapted from Definition 2.1 in \cite{marshall2006real}]\label{defn:multiring}
 A multiring is a sextuple $(R,+,\cdot,-,0,1)$ where $R$ is a non-empty set, $+:R\times 
R\rightarrow\mathcal P(R)\setminus\{\emptyset\}$,
 $\cdot:R\times R\rightarrow R$
 and $-:R\rightarrow R$ are functions, $0$ and $1$ are elements of $R$ satisfying:
 \begin{enumerate}[i -]
  \item $(R,+,-,0)$ is a commutative multigroup;
  \item $(R,\cdot,1)$ is a commutative monoid;
  \item $a.0=0$ for all $a\in R$;
  \item If $c\in a+b$, then $c.d\in a.d+b.d$. Or equivalently, $(a+b).d\subseteq a.d+b.d$.
 \end{enumerate}

Note that if $a \in R$, then $0 = 0.a \in (1+ (-1)).a \subseteq 1.a + (-1).a$, thus $(-1). a = -a$.
 
 $R$ is said to be an hyperring if for $a,b,c \in R$, $a(b+c) = ab + ac$. 
 
 A multring (respectively, a hyperring) $R$ is said to be a multidomain (hyperdomain) if it hasn't zero divisors. A multring 
$R$ will be a 
multifield if every non-zero element of $R$ has 
multiplicative inverse; note that hyperfields and multifields coincide.
\end{defn}

\begin{ex}\label{ex:1.3}
$ $
 \begin{enumerate}[a -]
  \item Suppose that $(G,\cdot,1)$ is a group. Defining $a \ast b = \{a \cdot b\}$ and $r(g)=g^{-1}$, 
we have that $(G,\ast,r,1)$ is a multigroup. In this way, every ring, domain and field is a multiring, 
multidomain and multifield, respectively.
  
  \item Let $K=\{0,1\}$ with the usual product and the sum defined by relations $x+0=0+x=x$, $x\in K$ and 
$1+1=\{0,1\}$. This is a multifield  called Krasner's multifield \cite{jun2015algebraic}. 
  
  \item $Q_2=\{-1,0,1\}$ is ``signal'' multifield with the usual product (in $\mathbb Z$) and the multivalued sum defined by 
relations
  $$\begin{cases}
     0+x=x+0=x,\,\mbox{for every }x\in Q_2 \\
     1+1=1,\,(-1)+(-1)=-1 \\
     1+(-1)=(-1)+1=\{-1,0,1\}
    \end{cases}
  $$
  \end{enumerate}
\end{ex}

\begin{ex}[Tropical Hyperfield \cite{viro2010hyperfields}]\label{tropical-ex}

For a fixed totally ordered abelian group $(G, +, -, 0, \leq)$ we can construct a  {\em tropical multifield} $T_G = (G \cup \{\infty\}, \otimes, \odot, \ominus, 0, 1) $ where:
\begin{multicols}{2}
\begin{enumerate}[i -]
    \item $\forall g \in G, g < \infty$;
    \item $g \otimes h := g+h$;
    \item $0 := \infty$;
    \item $1 := 0$;
    \item $\forall g \in G$, $g^{-1} = -g$;
    \item if $g \neq h$, $g \oplus h = \{min\{g,h\}\}$;
    \item $g \oplus g = \{h \in G \cup \{\infty\} : g \leq h\}$; 
    \item $\ominus g = g$.
\end{enumerate}
\end{multicols}

\end{ex}

In the sequence, we provide examples that generalizes the previous ones.

\begin{ex}[H-multifield, Example 2.8 in \cite{ribeiro2016functorial}]\label{H-multi}
Let $p\ge1$ be a prime integer and $H_p:=\{0,1,...,p-1\} \subseteq \mathbb{N}$. Now, define the binary multioperation and 
operation in $H_p$ as 
follow:
\begin{align*}
 a+b&=
 \begin{cases}H_p\mbox{ if }a=b,\,a,b\ne0 \\ \{a,b\} \mbox{ if }a\ne b,\,a,b\ne0 \\ \{a\} \mbox{ if }b=0 \\ \{b\}\mbox{ if 
}a=0 \end{cases} \\
 a\cdot b&=k\mbox{ where }0\le k<p\mbox{ and }k\equiv ab\mbox{ mod p}.
\end{align*}
$(H_p,+,\cdot,-, 0,1)$ is a hyperfield such that for all $a\in H_p$, $-a=a$. In fact, these $H_p$ is a kind of generalization of $K$, in the sense that $H_2=K$.
\end{ex}

\begin{ex}[Kaleidoscope, Example 2.7 in \cite{ribeiro2016functorial}]\label{kaleid}
 Let $n\in\mathbb{N}$ and define 
 $$X_n=\{-n,...,0,...,n\} \subseteq \mathbb{Z}.$$ 
 We define the \textbf{$n$-kaleidoscope multiring} by 
$(X_n,+,\cdot,-, 0,1)$, where $- : X_n \to X_n$ is restriction of the  opposite map in$\mathbb{Z}$,  $+:X_n\times 
X_n\rightarrow\mathcal{P}(X_n)\setminus\{\emptyset\}$ is given by the rules:
 $$a+b=\begin{cases}
    \{a\},\,\mbox{ if }\,b\ne-a\mbox{ and }|b|\le|a| \\
    \{b\},\,\mbox{ if }\,b\ne-a\mbox{ and }|a|\le|b| \\
    \{-a,...,0,...,a\}\mbox{ if }b=-a
   \end{cases},$$
and $\cdot:X_n\times X_n\rightarrow X_n$ is is given by the rules:
 $$a\cdot b=\begin{cases}
    \mbox{sgn}(ab)\max\{|a|,|b|\}\mbox{ if }a,b\ne0 \\
    0\mbox{ if }a=0\mbox{ or }b=0
   \end{cases}.$$
With the above rules we have that $(X_n,+,\cdot, -, 0,1)$ is a multiring which is not a hyperring for $n\ge2$ because $$n(1-1)=b\cdot\{-1,0,1\}=\{-n,0,n\}$$
and $n-n=X_n$. Note that $X_0=\{0\}$ and $X_1=\{-1,0,1\} =  Q_2$. 
\end{ex}

\begin{ex}[Multigroup of a Linear Order, 3.4 of \cite{viro2010hyperfields}]
Let $(\Gamma,\cdot,1,\le)$ be an ordered abelian group. We have an associated hyperfield structure $(\Gamma\cup\{0\},+,-\cdot,0,1)$ with the rules $-a:=a$, $a\cdot0=0\cdot a:=0$ and
$$a+b:=\begin{cases}a \mbox{ if }a<b \\ b\mbox{ if }b<a\\
[0,a]\mbox{ if }a=b\end{cases}$$
Here we use the convention $0\le a$ for all $a\in\Gamma$. 
\end{ex}

 Now, we treat about morphisms:

\begin{defn}\label{defn:morphism}
 Let $A$ and $B$ multirings. A map $f:A\rightarrow B$ is a morphism if for all $a,b,c\in A$:
 \begin{multicols}{2}
 \begin{enumerate}[i -]
  \item $f(1)=1$ and $f(0)=0$;
  \item $f(-a)=-f(a)$;
  \item $f(ab)=f(a)f(b)$;
  \item $c\in a+b\Rightarrow f(c)\in f(a)+f(b)$.
 \end{enumerate}
 \end{multicols}
  A morphism $f$ is \textbf{a full morphism} if for all $a,b\in A$, $$f(a+b)=f(a)+f(b)\mbox{ and }f(a\cdot b)=f(a)\cdot f(b).$$
\end{defn}

\begin{ex}
$ $
\begin{enumerate}[i -]  
\item The prime ideals  of a commutative ring (its Zariski spectrum) are classified by equivalence classes of morphisms into  algebraically closed  fields,  but they can be {\em uniformly classified} by a multiring morphism into the Krasner multifield $K = \{0,1\}$.

\item The orderings of a commutative ring (its real spectrum) are classified by classes of equivalence of ring homomorphims into  real closed fields, but they can be {\em uniformly classified} by a multiring morphism into the signal multifield  $Q_2 = \{-1,0,1\}$. 

\item A Krull valuation on a commutative ring with group of values $(G, +, -, 0, \leq)$  is just a morphism into the multifield $T_G=G \cup\{\infty\}$.

\end{enumerate}

\end{ex}

\begin{lem}[Facts about full morphisms of multirings]\label{factstrong}
Let $f:A\rightarrow B$ be a full morphism of multirings. Then
\begin{enumerate}[a -]
    \item For all $a_1,...,a_n\in A$,
    $$f(a_1+...+a_n)=f(a_1)+...+f(a_n).$$
    \item For all $a_1,...,a_n,b_1,...,b_n\in A$,
    $$f[(a_1+b_1)(a_2+b_2)...(a_n+b_n)]=[f(a_1)+f(b_1)][f(a_2)+f(b_2))...(f(a_n)+f(b_n)].$$
    \item For all $c_1,...,c_n,d_1,...,d_n\in A$,
    $$f[c_1d_1+c_2d_2+...+c_nd_n]=f(c_1)f(d_1)+f(c_2)f(d_2)+...+f(c_n)f(d_n).$$
    
    \item For all $a_0,...,a_n,\alpha\in A$,
    $$f[a_0+a_1\alpha+...+a_n\alpha^n]=f(a_0)+f(a_1)f(\alpha)+...+f(a_n)f(\alpha)^n.$$
    
    \item Let $A_1,A_2,A_3$ be multirings with injective morphisms (embeddings) $i_{12}:A_1\rightarrow A_2$, $i_{13}:A_1\rightarrow A_3$ and $i_{23}:A_2\rightarrow A_3$. 
    $$\xymatrix@!=4pc{A_1\ar[r]^{i_{12}}\ar[dr]_{i_{13}} & A_2\ar[d]^{i_{23}} \\ & A_3}$$
    Suppose that $i_{13}=i_{23}\circ i_{12}$ is a full embedding. If $i_{23}$ is a full embedding then $i_{12}$ is a full embedding.
    
\end{enumerate}
\end{lem}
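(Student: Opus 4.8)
The plan is to derive parts (a)--(d) in succession, all of them resting ultimately on the single identity $f(a+b)=f(a)+f(b)$ defining a full morphism; note first that the multiplicative clause $f(a\cdot b)=f(a)\cdot f(b)$ required by fullness is already part of being a morphism (Definition~\ref{defn:morphism}(iii)), since multiplication in a multiring is single-valued. Throughout I would use freely that the image of a union of sets is the union of the images, and that iterated multivalued sums $a_1+\dots+a_n$ and iterated products $(a_1+b_1)\cdots(a_n+b_n)$ are well-defined subsets of $A$: the former by the associativity axiom M3 together with M4, the latter because multiplication is an associative single-valued operation, so that $S\cdot T:=\{st:s\in S,\ t\in T\}$ makes sense for subsets $S,T\subseteq A$.

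For (a) I would induct on $n$, the cases $n=1,2$ being respectively trivial and exactly the defining property of a full morphism. For the inductive step, writing $a_1+\dots+a_{n+1}=\bigcup_{s\in a_1+\dots+a_n}(s+a_{n+1})$, one gets
$$f(a_1+\dots+a_{n+1})=\bigcup_{s\in a_1+\dots+a_n}f(s+a_{n+1})=\bigcup_{s\in a_1+\dots+a_n}\bigl(f(s)+f(a_{n+1})\bigr),$$
and since $\{f(s):s\in a_1+\dots+a_n\}=f(a_1+\dots+a_n)=f(a_1)+\dots+f(a_n)$ by the induction hypothesis, the last union equals $\bigcup_{t\in f(a_1)+\dots+f(a_n)}(t+f(a_{n+1}))=f(a_1)+\dots+f(a_{n+1})$.

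Parts (b), (c), (d) then come cheaply. For (b) I would again induct on $n$, using at each step the elementary identity $f(S\cdot T)=\{f(st):s\in S,t\in T\}=\{f(s)f(t):s\in S,t\in T\}=f(S)\cdot f(T)$ for subsets $S,T$, together with $f(a_i+b_i)=f(a_i)+f(b_i)$. Part (c) is the instance of (a) obtained by setting $a_i:=c_id_i$ and then applying $f(c_id_i)=f(c_i)f(d_i)$; part (d) is the instance of (c) with $c_0:=a_0$, $d_0:=1$, $c_i:=a_i$, $d_i:=\alpha^i$, using $f(1)=1$ and $f(\alpha^i)=f(\alpha)^i$.

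Finally, for (e): as noted, $i_{12}(a\cdot b)=i_{12}(a)i_{12}(b)$ holds automatically, and $i_{12}(a+b)\subseteq i_{12}(a)+i_{12}(b)$ is immediate from Definition~\ref{defn:morphism}(iv), so it remains to prove the reverse inclusion $i_{12}(a)+i_{12}(b)\subseteq i_{12}(a+b)$. Take $w\in i_{12}(a)+i_{12}(b)$ in $A_2$; applying the morphism $i_{23}$ gives $i_{23}(w)\in i_{23}(i_{12}(a))+i_{23}(i_{12}(b))=i_{13}(a)+i_{13}(b)$, which equals $i_{13}(a+b)=i_{23}(i_{12}(a+b))$ because $i_{13}$ is full. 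Hence $i_{23}(w)=i_{23}(i_{12}(c))$ for some $c\in a+b$, and injectivity of $i_{23}$ forces $w=i_{12}(c)\in i_{12}(a+b)$. I do not expect a real obstacle here: the only delicate points are ensuring that the multivalued sums and products are legitimate subsets before manipulating them, and, in (e), keeping track of which arrows are merely morphisms and which are full --- the argument in fact uses only that $i_{13}$ is full and that $i_{23}$ is an injective morphism, so the full hypothesis on $i_{23}$ is somewhat more than is strictly needed.
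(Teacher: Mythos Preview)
Your proposal is correct and follows essentially the same route as the paper. The only cosmetic differences are that the paper handles (a) and (b) by an explicit two-sided element chase for the base case (with $n=3$ in (a), $n=2$ in (b)) rather than your union formulation, and it phrases (e) as a proof by contradiction rather than your direct argument; these are contrapositives of one another. Your closing remark that (e) needs only injectivity of $i_{23}$, not its fullness, is also correct and slightly sharper than what the paper states.
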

\begin{proof}
$ $
\begin{enumerate}[a -]
    \item By induction we only need to prove the case $n=3$. By the very definition of morphisms and multirings we get
    $$f(a_1+a_2+a_3)\subseteq f(a_1)+f(a_2)+f(a_3).$$
    To prove the another inclusion, let $d\in f(a_1)+f(a_2)+f(a_3)$. Since $f$ is full, $d\in f(a_1)+e$ for some $e\in f(a_2)+f(a_3)=f(a_2+a_3)$. Then $e=f(b)$ for some $b\in a_1+a_2$. Hence 
    $$d\in f(b)+f(a_3)=f(b+a_3)\subseteq f(a_1+a_2+a_3).$$
    Therefore $f(a_1)+f(a_2)+f(a_3)\subseteq f(a_1+a_2+a_3)$.
    
    \item By induction we only need to prove the case $n=2$. Let $y\in f[(a_1+b_1)(a_2+b_2)]$. Then $y=f(x)$ for some $x\in (a_1+b_1)(a_2+b_2)$. Moreover $x=x_1x_2$ for some $x_1\in a_1+b_1$ and $x_2\in a_2+b_2$. Then
    $f(x_1)\in f(a_1+b_1)=f(a_1)+f(b_1)$, $f(x_2)\in f(a_2+b_2)=f(a_2)+f(b_2)$ and
    $$y=f(x)=f(x_1x_2)=f(x_1)f(x_2)\in(f(a_1)+f(b_1))(f(a_2)+f(b_2)).$$
    Now let $z\in (f(a_1)+f(b_1))(f(a_2)+f(b_2))$. Then $z=z_1z_2$ for some
    $z_1\in f(a_1)+f(b_1)=f(a_1+b_1)$ and $z_2\in f(a_2)+f(b_2)=f(a_2+b_2)$. So there exist $x_1\in a_1+b_1$ and $x_2\in a_2+b_2$ with $z_1=f(x_1)$ and $z_2=f(x_2)$. Therefore
    $$z=z_1z_2=f(x_1)f(x_2)\in f[(a_1+b_1)(a_2+b_2)].$$
    
    \item Just apply item (b) changing the "variable" $a_i$ by $c_id_i$ and choosing $b_i=0$, $i=1,...,n$.
    
    \item Just apply item (c) with $c_i=a_i$ and $d_i=\alpha^i$, $i=0,...,n$.
    
    \item Suppose $i_{23}$ is a full embedding and that exist $d\in (i_{12}(a)+i_{12}(b))\setminus i_{12}(a+b)$ for some $a,b\in A_1$. Since $i_{23}$ is a full embedding, $i_{23}(d)\notin i_{23}(i_{12}(a+b))$. But
\begin{align*}
    i_{23}(d)&\in i_{23}(i_{12}(a)+i_{12}(b))=i_{23}\circ i_{12}(a)+i_{23}\circ i_{12}(b)=i_{13}(a)+i_{13}(b) \\
    &=i_{13}(a+b)=i_{23}\circ i_{12}(a+b)=i_{23}(i_{12}(a+b)),
\end{align*}
contradiction. Then $(i_{12}(a)+i_{12}(b))=i_{12}(a+b)$ and $i_{12}$ is a full embedding.
\end{enumerate}
\end{proof}

\section{Superrings, Superfields}

 The concept of suppering first appears in  (\cite{ameri2019superring}). There are many important advances and results in 
hyperring theory, and 
for instance, we recommend for example, the following papers: \cite{al2019some}, \cite{ameri2017multiplicative}, 
\cite{ameri2019superring}, 
\cite{ameri2020advanced}, \cite{massouros1985theory}, \cite{nakassis1988recent}, \cite{massouros1999homomorphic}, 
\cite{massouros2009join}.
 
\begin{defn}[Definition 5 in \textup{\cite{ameri2019superring}}]
 A superring is a structure $(S,+,\cdot, -, 0,1)$ such that:
 \begin{enumerate}[i -]
  \item $(S,+, -, 0)$ is a commutative multigroup.
  
  \item $(S,\cdot,1)$ is a commutative multimonoid. 
  
  \item $0$ is an absorbing element: $a\cdot0= \{0\} = 0 \cdot a$, for all $a\in S$.
  
  \item The weak/semi distributive law holds: 
   if $d\in c.(a+b)$ then $d\in (ca+cb)$,
  for all $a,b,c,d\in S$.  
  
  \item  The rule of signals holds:  $-(ab)=(-a)b=a(-b)$, for all $a,b\in S$.
 \end{enumerate}
 A superdomain is a non-trivial superring without zero-divisors in this new context, i.e. whenever
 $$0\in a\cdot b \mbox{ iff }a=0 \mbox{ or } b=0$$
 A quasi-superfield is a non-trivial superring such that every nonzero element is invertible in this new context\footnote{
 For a quasi-superfield $F$, we \textbf{are not imposing} that $(S\setminus\{0\},\cdot,1)$ will be a commutative multigroup, i.e, 
that if $d\in a\cdot b$ then $b^{-1}\in a\cdot d^{-1}$.}, i.e. 
whenever
 $$\mbox{ For all }a \neq 0 \mbox{ exists }b\mbox{ such that }1\in a\cdot b.$$
 A superfield is a quasi-superfield which is also a superdomain. A superring is full if for all $a,b,c,d\in S$, $d\in c\cdot(a+b)$ iff $d\in ca+cb$.
\end{defn}

\begin{ex}
Every multiring can be seen as a superring, in the very same fashion of \ref{ex:1.3}(a). Our main example of superring is the superring of multipolynomials $R[X]$ over a multiring $R$. The construction will be presented in short in Section \ref{secpol}. For more details, see \cite{roberto2021superrings}, \cite{ameri2019superring} or \cite{davvaz2016codes}.
\end{ex}

Now we treat about morphisms.

\begin{defn}
 Let $A$ and $B$ superrings. A map $f:A\rightarrow B$ is a morphism if for all $a,b,c\in A$:
 \begin{multicols}{2}
  \begin{enumerate}[i -]
  \item $f(0)=0$;
  \item $f(1)=1$;
  \item $f(-a)=-f(a)$;
  \item $c\in a+b\Rightarrow f(c)\in f(a)+f(b)$;
  \item $c\in a\cdot b\Rightarrow f(c)\in f(a)\cdot f(b)$.
 \end{enumerate} 
 \end{multicols}
 A morphism $f$ is \textbf{a full morphism} if for all $a,b\in A$, $$f(a+b)=f(a)+f(b)\mbox{ and }f(a\cdot b)=f(a)+f(b).$$
\end{defn}


From now on, we use the following conventions: Let $(R,+,\cdot, -, 0,1)$ be a superring, $p\in\mathbb N$ and consider a $p$-tuple $\vec a=(a_0,a_1, ..., a_{p-1})$. We define the finite sum by:
\begin{align*}
 x\in\sum_{i<0}a_i&\mbox{ iff }x=0, \\
 x\in\sum_{i<p}a_i&\mbox{ iff }x\in y+a_{p-1}\mbox{ for some }y\in\sum_{i<p-1}a_i, \text{if} \ p \geq 1.
\end{align*}
and the finite product by:
\begin{align*}
 x\in\prod_{i<0}a_i&\mbox{ iff }x=1, \\
 x\in\prod_{i<p}a_i&\mbox{ iff }x\in y\cdot a_{p-1}\mbox{ for some }y\in\prod_{i<p-1}a_i, \text{if} \ p \geq 1.
\end{align*}

Thus, if $(\vec{a}_0, \vec{a}_1,...,\vec{a}_{p-1})$ is a $p$-tuple of tuples $\vec{a}_i = (a_{i0}, a_{i1},..., a_{i{m_i}})$, 
then 
we have the finite sum of finite products:
\begin{align*}
 x\in\sum_{i<0}\prod_{j<{m_i}}a_{ij}&\mbox{ iff }x=0, \\
 x\in\sum_{i<p}\prod_{j<{m_i}}a_{ij}&\mbox{ iff }x\in y+z\mbox{ for some }y\in
 \sum_{i<{p-1}}\prod_{j<{m_i}}a_{ij}
\mbox{ 
and } z\in\prod_{j<m_{p-1}}a_{{p-1},j}, \text{if} \ p \geq 1.
\end{align*}

\begin{lem}[Basic Facts]\label{lembasic1}
    Let $A$ be a superring.
    \begin{enumerate}[a -]
        \item For all $n\in\mathbb N$ and all $a_1,...,a_n\in A$, the sum $a_1+...+a_n$ and product $a_1\cdot...\cdot  a_n$ does not depends on the order of the entries.
        \item If $A$ is a full superdomain, then $ax=ay$ for some $a\ne0$ imply $x=y$.
        \item If $A$ is full, then for all $d,a_1,...,a_n\in A$
        $$d(a_1+...+a_n)=da_1+...+da_n.$$
        \item Suppose $A$ is a full superdomain and let $a\in A\setminus\{0\}$. If $1\in (a\cdot b)\cap(a\cdot c)$ then $b=c$.
        \item (Newton's Binom Formula) For $n\ge1$ and $X\subseteq A$ denote
            $$nX:=\sum^n_{i=1}X.$$
            Then for $A,B\subseteq A$,
            $$(A+B)^n\subseteq\sum^n_{j=0}\binom{n}{j}A^jB^{n-j}.$$
    \end{enumerate}
\end{lem}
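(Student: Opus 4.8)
The plan is to prove the five items of Lemma \ref{lembasic1} essentially in the order stated, each one feeding into the next, since they are all consequences of the superring axioms (especially the full distributivity and the absence of zero divisors).

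For item (a), the plan is to reduce to a transposition argument: any permutation of $\{1,\dots,n\}$ is a composition of adjacent transpositions, so it suffices to show $a_1 + \dots + a_i + a_{i+1} + \dots + a_n = a_1 + \dots + a_{i+1} + a_i + \dots + a_n$ as subsets of $A$. First I would handle the case $n=2$, which is exactly the commutativity axiom M4 for the multigroup $(A,+,-,0)$ (and, multiplicatively, M4 for the multimonoid $(A,\cdot,1)$). Then I would use axiom M3 (the "mixed associativity" axiom) together with the inductive definition of finite sums to move a single entry past its neighbor; the bookkeeping is a routine induction on $n$ using the recursive definition $x \in \sum_{i<p} a_i$ iff $x \in y + a_{p-1}$ for some $y \in \sum_{i<p-1}a_i$. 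The main subtlety is that sums are multivalued, so "does not depend on the order" means equality of the resulting subsets of $A$; this is where M3 does the real work, playing the role of associativity. The product case is strictly easier since $\cdot$ on a multimonoid still satisfies M3 and M4.

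For item (b): given $ax = ay$ with $a \neq 0$ in a full superdomain, I would pass to $0 \in a(x-y)$ — more precisely, since $A$ is full, $a x + (-(a y)) = a x + a(-y) = a(x + (-y))$ using the rule of signals and item (c) (or directly distributivity), and $0 \in ax + (-(ay))$ because $ay = ax$ and $0 \in z + (-z)$ for any $z$ by M1. Hence $0 \in a(x-y)$ with $a \neq 0$, and the superdomain property forces some element of $x + (-y)$ to be $0$; by M1/M2 this means $x = y$. Item (c) is just item (a) of Lemma \ref{factstrong} read internally — actually it is even more basic: fullness gives $d(a_1 + \dots + a_n) = d a_1 + d(a_2 + \dots + a_n)$ directly for $n=2$, and then I would induct, exactly mirroring the argument for Lemma \ref{factstrong}(a) but with the multiplication $d \cdot (-)$ in place of a morphism $f$; the same "peel off one term, use fullness, recombine" scheme works because $x \mapsto d\cdot x$ distributes over $+$ by hypothesis. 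Item (d) is the multiplicative analogue of (b): from $1 \in ab$ and $1 \in ac$ with $a \neq 0$, I would multiply through: $b \in b \cdot 1 \subseteq b(ac) = (ba)c$, and $1 \in ab$ gives $1 \in ba$, so $b \in c$ — wait, more carefully, $b \in 1 \cdot b$ and $1 \in ab = ba$ yield $b \in (ab) b \subseteq$ ... the cleanest route is to use that $a \neq 0$ has an inverse $a^{-1}$ with $1 \in a a^{-1}$, derive $b = c$ by showing both equal the unique-up-to-the-domain-property element forced by cancelling $a$, invoking (b)'s multiplicative twin; I would state and prove a one-line multiplicative cancellation sublemma first if needed.

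For item (e), Newton's binomial inclusion: the plan is induction on $n$. The base case $n=1$ is $A + B \subseteq A + B$, trivial. For the inductive step, $(A+B)^{n+1} \subseteq (A+B)^n (A+B) \subseteq \left(\sum_{j=0}^n \binom{n}{j} A^j B^{n-j}\right)(A+B)$, and then I would expand using item (c) (distributivity over the two-term sum $A+B$) to get $\sum_{j=0}^n \binom{n}{j} A^j B^{n-j} A + \sum_{j=0}^n \binom{n}{j} A^j B^{n-j} B = \sum_{j=0}^n \binom{n}{j} A^{j+1} B^{n-j} + \sum_{j=0}^n \binom{n}{j} A^j B^{n-j+1}$, reindex, and collect terms using Pascal's rule $\binom{n}{j} + \binom{n}{j-1} = \binom{n+1}{j}$, where the coefficient $\binom{n+1}{j}$ on $A^j B^{n+1-j}$ records how many subsets of the sum that term appears in (so $\binom{n}{j} A^j B^{n+1-j} + \binom{n}{j-1} A^j B^{n+1-j} \subseteq \binom{n+1}{j} A^j B^{n+1-j}$ as sets, since adding more copies of a set to an iterated sum only enlarges it). The one point requiring care is that these are set inclusions, not equalities, and that the coefficient arithmetic is really a statement about $mX \subseteq (m+k)X$ for $k \geq 0$, which follows because $0 \in X + (-X) \ni 0$ lets us absorb extra summands — I would record this monotonicity fact ($X \subseteq X + Y$ whenever $0 \in Y$, hence $mX \subseteq m'X$ for $m \leq m'$) explicitly before running the induction. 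The main obstacle across the whole lemma is item (a): making the permutation-invariance argument for multivalued sums fully rigorous via M3 is more delicate than it looks, and everything downstream (b)–(e) quietly depends on being able to rearrange and regroup sums freely.
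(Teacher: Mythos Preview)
Your approach to items (a), (b), (c), and (e) is essentially the same as the paper's, and is correct. The paper is considerably more terse---for (a) it simply says ``immediate consequence of associativity and induction,'' for (c) it does the step from two to three terms explicitly, and for (e) it only writes out the case $n=2$---but your more detailed plans match. One small remark on (e): you do not need the monotonicity $mX \subseteq m'X$ at all (indeed it is false in general unless $0 \in X$); Pascal's identity $\binom{n}{k-1} + \binom{n}{k} = \binom{n+1}{k}$ gives you the exact coefficient, and $mC + m'C = (m+m')C$ is a definitional equality once (a) lets you rearrange the summands.

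Item (d), however, has a genuine gap. Your ``cleanest route'' assumes $a$ has an inverse, but $A$ is only a full superdomain, not a superfield, so inverses are not available. Your alternative, a ``multiplicative twin of (b),'' does not apply either: (b) starts from the set equality $ax = ay$, whereas here you only know $1 \in (ab) \cap (ac)$, which is strictly weaker. The associativity manoeuvre you begin---$b \in b\cdot 1 \subseteq b(ac) = (ba)c$---only gives $b \in z\cdot c$ for \emph{some} $z \in ba$, and you cannot force $z=1$. The paper instead routes through the \emph{additive} structure: from $1 \in ab$ and $1 \in ac$ one gets
\[
0 \in 1 - 1 \subseteq (ab) - (ac) = ab + a(-c) = a(b-c),
\]
the last equality using fullness and the rule of signals. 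Then $0 \in a \cdot z$ for some $z \in b - c$, and the superdomain hypothesis with $a \neq 0$ forces $z = 0$, hence $0 \in b - c$ and $b = c$. This is exactly the template of (b), but applied to the containment $0 \in a(b-c)$ rather than to a set equality; you should replace your argument for (d) with this one.
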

\begin{proof}
 $ $
 \begin{enumerate}[a -]
  \item It is an immediate consequence of associativity and induction. 
  \item Let $ax=ay$ for some $a\ne0$. Then $ax-ay=ay-ay$. Since $A$ is full, $a(x-y)=ay-ay$, and then, 
  $$0\in ay-ay=a(x-y).$$
  Moreover, $0\in az$ for some $z\in x-y$. Since $A$ is a superdomain and $a\ne0$, $z=0$. Then $0\in x-y$, which imply $x=y$.
  \item By induction, we only need to proof the case $n=2$. Let $a,b,c,d\in A$. We already know that $d(a+b+c)\subseteq da+db+dc$. Now consider $x\in da+db+dc$. Then $x\in e+dc$ for some $e\in da+db=d(a+b)$. Then $e\in de'$ with $e'\in a+b$ and $x\in e+dc\subseteq de'+dc=d(e'+c)$. Hence
  $$x\in d(e'+c)\subseteq d(a+b+c).$$
  \item Let $1\in (a\cdot b)\cap(a\cdot c)$. Then
  $$0\in1-1\subseteq(a\cdot b)-(a\cdot c)=a\cdot(b-c).$$
  Since $0\in a\cdot(b-c)$ and $a\ne0$ we have $0\in b-c$, which imply $b=c$.
  \item By induction is enough to prove the case $n=2$. We have
  \begin{align*}
    (A+B)^2&:=(A+B)(A+B)\subseteq A(A+B)+B(A+B)\subseteq A^2+AB+BA+B^2 \\
    &=A^2+AB+AB+B^2=A^2+2AB+B^2:=\sum^2_{j=0}\binom{n}{j}A^jB^{n-j}.
  \end{align*}
 \end{enumerate}
\end{proof}

\begin{lem}[Facts about full morphisms of superrings]\label{factstrong2}
Let $f:A\rightarrow B$ be a full morphism of superrings. Then
\begin{enumerate}[a -]
    \item For all $a_1,...,a_n\in A$,
    $$f(a_1+...+a_n)=f(a_1)+...+f(a_n).$$
    \item For all $a_1,...,a_n,b_1,...,b_n\in A$,
    $$f[(a_1+b_1)(a_2+b_2)...(a_n+b_n)]=(f(a_1)+f(b_1))(f(a_2)+f(b_2))...(f(a_n)+f(b_n)).$$
    \item For all $c_1,...,c_n,d_1,...,d_n\in A$,
    $$f(c_1d_1+c_2d_2+...+c_nd_n)=f(c_1)f(d_1)+f(c_2)f(d_2)+...+f(c_n)f(d_n).$$
    
    \item For all $a_0,...,a_n,\alpha\in A$,
    $$f(a_0+a_1\alpha+...+a_n\alpha^n)=f(a_0)+f(a_1)f(\alpha)+...+f(a_n)f(\alpha)^n.$$
    
    \item Let $A_1,A_2,A_3$ be superrings with injective morphisms (embeddings) $i_{12}:A_1\rightarrow A_2$, $i_{13}:A_1\rightarrow A_3$ and $i_{23}:A_2\rightarrow A_3$. 
    $$\xymatrix@!=4pc{A_1\ar[r]^{i_{12}}\ar[dr]_{i_{13}} & A_2\ar[d]^{i_{23}} \\ & A_3}$$
    Suppose that $i_{13}=i_{23}\circ i_{12}$ is a full embedding. If $i_{23}$ is a full embedding then $i_{12}$ is a full embedding.
\end{enumerate}
\end{lem}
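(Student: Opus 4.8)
The plan is to mirror the proof of Lemma \ref{factstrong}, proving each item by induction on $n$ and replacing the single-valued multiplication of the multiring setting by the ``union over representatives'' bookkeeping forced by the multivalued $\cdot$ of a superring. The workhorse throughout is the observation that, for subsets $S,T\subseteq A$, one has $f(S+T)=\bigcup_{s\in S,\,t\in T}f(s+t)$ and $f(S\cdot T)=\bigcup_{s\in S,\,t\in T}f(s\cdot t)$ (since $S+T$ and $S\cdot T$ are, by convention, those unions and images commute with unions), so that the two fullness equations $f(s+t)=f(s)+f(t)$ and $f(s\cdot t)=f(s)\cdot f(t)$ upgrade to $f(S+T)=f(S)+f(T)$ and $f(S\cdot T)=f(S)\cdot f(T)$.

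For (a) I would induct on $n$: the cases $n\le 2$ are the definition and fullness, and for the step I unfold $a_1+\dots+a_n=\bigcup_{y\in a_1+\dots+a_{n-1}}(y+a_n)$, apply $f$, invoke fullness on each $y+a_n$, and then rewrite the index set $\{f(y):y\in a_1+\dots+a_{n-1}\}$ as $f(a_1)+\dots+f(a_{n-1})$ by the induction hypothesis. Item (b) is the same induction on the number of factors: using the ``finite product'' convention one unfolds $(a_1+b_1)\cdots(a_n+b_n)=\bigcup\{y\cdot z: y\in(a_1+b_1)\cdots(a_{n-1}+b_{n-1}),\ z\in a_n+b_n\}$, pushes $f$ through with multiplicative fullness and the already-proven additive case applied to $a_n+b_n$, and closes with the induction hypothesis.

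The one genuine difference from Lemma \ref{factstrong} is that in a superring the expressions $c_id_i$ and $\alpha^i$ are \emph{sets} rather than elements, so (c) and (d) are not literal specialisations of (b). I would therefore prove (c) by a direct induction along the ``sum of finite products'' convention, applying $f$, using $f(c_nd_n)=f(c_n)\cdot f(d_n)$ and the induction hypothesis on the truncated sum. For (d) I would first record, by a one-line induction on $i$ using multiplicative fullness, that $f(\alpha^i)=f(\alpha)^i$ as subsets of $B$, and then deduce (d) either from (c) after writing $a_0+a_1\alpha+\dots+a_n\alpha^n=\bigcup\{a_0+a_1e_1+\dots+a_ne_n: e_i\in\alpha^i\}$, or by repeating the induction of (c) verbatim. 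Item (e) is verbatim the argument of Lemma \ref{factstrong}(e) run separately for $\ast=+$ and $\ast=\cdot$: assuming $i_{23}$ a full embedding and some $d\in(i_{12}(a)\ast i_{12}(b))\setminus i_{12}(a\ast b)$, injectivity of $i_{23}$ gives $i_{23}(d)\notin i_{23}(i_{12}(a\ast b))$, whereas fullness of $i_{23}$ and of $i_{13}=i_{23}\circ i_{12}$ gives $i_{23}(d)\in i_{23}(i_{12}(a))\ast i_{23}(i_{12}(b))=i_{13}(a)\ast i_{13}(b)=i_{13}(a\ast b)=i_{23}(i_{12}(a\ast b))$, a contradiction.

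I do not expect a real obstacle: the content is the same as in the multiring case, and the only work is the notational discipline of keeping the iterated unions honest in (b)--(d) — each product of two subsets being itself a union, one is really manipulating nested unions — which is exactly why the order-independence of the iterated sum and product (Lemma \ref{lembasic1}(a)) must be invoked to make the unfoldings well posed. No hypotheses on $A$, $B$ beyond being superrings and $f$ being a full morphism (and, in (e), injectivity of the embeddings) are needed.
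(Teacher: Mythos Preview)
Your proposal is correct and follows exactly the approach the paper intends: its entire proof is the single line ``Similar to Lemma \ref{factstrong}.'' You have actually filled in more than the paper does, and your observation that in the superring setting $c_id_i$ and $\alpha^i$ are sets (so (c) and (d) are not literal substitutions into (b) as they were for multirings) is a genuine point the paper glosses over; likewise, running the argument of (e) once for $+$ and once for $\cdot$ is necessary here and only implicit in the paper.
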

\begin{proof}
Similar to Lemma \ref{factstrong}.
\end{proof}

\begin{defn}\label{char}
$ $
 \begin{enumerate}[i -]
  \item The \textbf{characteristic} of a superring is the smaller integer $n \geq 1$ such that
  $$0\in\sum_{i<n}1,$$
  otherwise the characteristic is zero.
  For full superdomains, this is equivalent to say that $n$ is the smaller integer such that
  $$\mbox{For all }a,\,0\in\sum_{i<n}a.$$
 
  \item An \textbf{ideal} of a superring $A$ is a non-empty subset $\mathfrak{a}$ of $A$ such that 
$\mathfrak{a}+\mathfrak{a}\subseteq\mathfrak{a}$ and $A\mathfrak{a}\subseteq\mathfrak{a}$. We denote
  $$\mathfrak I(A)=\{I\subseteq A:I\mbox{ is an ideal}\}.$$
  
  \item  Let $S$ be a subset of a superring $A$. We define the \textbf{ideal generated by} $S$ as 
  $$\langle S\rangle:=\bigcap\{\mathfrak{a}\subseteq A\mbox{ ideal}:S\subseteq\mathfrak{a}\}.$$
  If $S=\{a_1,...,a_n\}$, we easily check that
  $$\langle a_1,...,a_n\rangle=\sum Aa_1+...+\sum Aa_n,\,\mbox{where }\sum 
Aa=\bigcup\limits_{n\ge1}\{\underbrace{Aa+...+Aa}_{n\mbox{ times}}\}.$$ 
  Note that if $A$ is a full superring, then $\sum Aa=Aa$.
  
  \item An ideal $\mathfrak{p}$ of $A$ is said to be \textbf{prime} if $1\notin\mathfrak{p}$ and 
$ab\subseteq\mathfrak{p}\Rightarrow a\in\mathfrak{p}$ or $b\in\mathfrak{p}$. We denote 
  $$\mbox{Spec}(A)=\{\mathfrak{p}\subseteq A:\mathfrak{p}\mbox{ is a prime ideal}\}.$$
  
  \item An ideal $\mathfrak{p}$ of $A$ is said to be \textbf{strongly prime} if $1\notin\mathfrak{p}$ and 
$ab\cap\mathfrak{p}\ne\emptyset\Rightarrow a\in\mathfrak{p}$ or $b\in\mathfrak{p}$. We denote 
  $$\mbox{Spec}_s(A)=\{\mathfrak{p}\subseteq A:\mathfrak{p}\mbox{ is a strongly prime ideal}\}.$$
  Note that every strongly prime ideal is prime. 
  
  \item An ideal $\mathfrak{m}$ is maximal if it is proper and for all ideals $\mathfrak{a}$ with 
  $\mathfrak{m}\subseteq\mathfrak{a}\subseteq 
  A$ then $\mathfrak{a}=\mathfrak{m}$ or $\mathfrak{a}=A$.
  
  \item For an ideal $I\subseteq A$, we define operations in the quotient $A/I=\{x+I:x\in A\}=\{\overline x:x\in A\}$, by the 
rules
  \begin{align*}
      \overline x+\overline y&=\{\overline z:z\in x+y\}\\
      \overline x\cdot\overline y&=\{\overline z:z\in xy\}
  \end{align*}
    for all $\overline x,\overline y\in A/I$.
 \end{enumerate}
\end{defn}

\begin{rem}
$ $
 \begin{enumerate}[a -]
     \item If $A$ is a multiring, then every prime ideal is strongly prime. We do not know if this is the case for general superrings.
     
     \item If $A$ is a multiring, then every maximal ideal is prime (Proposition 1.7 of \cite{ribeiro2021anel}). For a general superring $A$, we do not know if a maximal ideal is prime.
     
     \item In his Ph.D Thesis \cite{ribeiro2021anel}, H. Ribeiro deals with elements \emph{weakly invertible} on a multiring $A$. This could be an anternative in dealing with the above questions.
 \end{enumerate}
\end{rem}

With all conventions and notations above, we obtain the following Lemma, which recover for superrings some properties holding for rings (and multirings).

\begin{lem}\label{lem1}
 Let $A$ be a superring and $I$ an ideal.
 \begin{enumerate}[i -]
  \item $I=A$ if and only if $1\in I$.
  \item $A/I$ is a superring. Moreover, if $A$ is full then $A/I$ is also full.
  \item $I$ is strongly prime if and only if $A/I$ is a superdomain.
 \end{enumerate}
 If $A$ is full, then
 \begin{enumerate}
     \item [iv -] $I=A$ if and only if $1\in I$, which occurs if and only if $A^*\cap I\ne\emptyset$ (in other words, if and only if 
$I$ contains an invertible element).
     \item [v -] $A$ is a superfield if and only if $\mathfrak I(A)=\{0,A\}$.
     \item [vi -] $I$ is maximal if and only if $A/I$ is a superfield.
 \end{enumerate}
\end{lem}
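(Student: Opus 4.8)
The plan is to prove the six items more or less in the order stated, using the quotient operations from Definition \ref{char}(vii) and the basic facts from Lemma \ref{lembasic1}. For (i), one direction is trivial; for the converse, if $1\in I$ then for any $a\in A$ we have $a\in a\cdot 1\subseteq A\cdot I\subseteq I$ (using the multimonoid axiom $a\in 1\cdot a$), so $I=A$. For (ii), I would check each superring axiom for $A/I$ by lifting representatives: the multigroup axioms M1--M4 for $(A/I,+,-,\bar 0)$ transfer because membership $\bar z\in\bar x+\bar y$ unwinds to $z'\in x'+y'+I$ for suitable representatives, and the coset structure of $I$ (namely $I+I\subseteq I$) makes this well-behaved; the absorbing property of $\bar 0$ and the rule of signs are immediate from those in $A$; weak distributivity $\bar d\in\bar c(\bar a+\bar b)\Rightarrow\bar d\in\bar c\bar a+\bar c\bar b$ is inherited term-by-term. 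For fullness: if $A$ is full, then $\bar c(\bar a+\bar b)=\bar c\bar a+\bar c\bar b$ because the corresponding equality holds upstairs and passes to cosets; here I would be careful that "full" for the quotient means the inclusion in the other direction also holds, which is exactly where the upstairs equality $c(a+b)=ca+cb$ (Lemma \ref{lembasic1}(c)) gets used.

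For (iii), this is the standard correspondence. If $I$ is strongly prime and $\bar 0\in\bar a\cdot\bar b$ in $A/I$, then there is $z\in a\cdot b$ with $\bar z=\bar 0$, i.e. $z\in I$; since $ab\cap I\ne\emptyset$ and $I$ is strongly prime, $a\in I$ or $b\in I$, so $\bar a=\bar 0$ or $\bar b=\bar 0$; also $\bar 1\ne\bar 0$ since $1\notin I$, so $A/I$ is non-trivial, hence a superdomain. Conversely, if $A/I$ is a superdomain, non-triviality gives $1\notin I$, and $ab\cap I\ne\emptyset$ forces (via the same unwinding) $\bar 0\in\bar a\bar b$, hence $\bar a=\bar 0$ or $\bar b=\bar 0$, i.e. $a\in I$ or $b\in I$.

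Now assume $A$ is full. Item (iv) adds to (i) only the observation that if $u\in A^*\cap I$ with $1\in u\cdot v$, then $1\in u\cdot v\subseteq I\cdot A\subseteq I$, so $I=A$; the reverse implications are clear. For (v): if $\mathfrak I(A)=\{0,A\}$, then any nonzero $a$ generates $\langle a\rangle=Aa$ (using $\sum Aa=Aa$ for full $A$, as noted in Definition \ref{char}(iii)), which is a nonzero ideal, hence $=A$, so $1\in Aa$, giving an inverse of $a$; thus $A$ is a quasi-superfield, and I must also check it is a superdomain --- here I would use fullness plus Lemma \ref{lembasic1}(b),(d)-style cancellation, or argue that if $0\in ab$ with $a\ne0$ then $\langle a\rangle=A$ gives $1\in Aa$, say $1\in ca$, whence $b\in cab\subseteq c\cdot\{0\}$-ish, pushing $b=0$. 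Conversely, if $A$ is a superfield and $I\ne 0$ is an ideal, pick $a\in I\setminus\{0\}$; invertibility gives $1\in ca\subseteq I$, so $I=A$ by (iv). Finally (vi) combines (v) applied to $A/I$ with the lattice isomorphism between ideals of $A/I$ and ideals of $A$ containing $I$ (which follows from (ii) together with the coset description of the quotient operations): $I$ is maximal iff $A/I$ has only the trivial ideals iff, by (v) applied to the full superring $A/I$ (fullness from (ii)), $A/I$ is a superfield.

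The main obstacle I anticipate is item (v), and more precisely the superdomain half: showing a full superring with no proper nonzero ideals has no zero divisors. In the classical and multiring settings this is automatic, but for superrings the subtleties flagged in the Remark (maximal need not be prime, prime need not be strongly prime) mean the argument cannot be purely formal; the cancellation machinery of Lemma \ref{lembasic1}(b),(d), which genuinely uses fullness, is what makes it go through. A secondary care point is the ideal-correspondence used in (vi): I would verify that $\mathfrak a\mapsto\mathfrak a/I$ is an inclusion-preserving bijection onto ideals of $A$ containing $I$, which is routine from the definition of the quotient operations but should be stated.
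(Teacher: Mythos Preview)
The paper states Lemma \ref{lem1} without proof, so there is no argument to compare against; the lemma is presented as recovering for superrings properties that hold for rings and multirings. Your plan for items (i)--(iv) and (vi), and for the direction ``superfield $\Rightarrow$ only trivial ideals'' in (v), follows the standard lines and is fine.

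The genuine gap is exactly where you flag it: the superdomain half of (v). Your sketch ``$1\in ca$, whence $b\in cab\subseteq c\cdot\{0\}$'' does not work, because $0\in a\cdot b$ does not force $a\cdot b=\{0\}$ in a superring; associativity only gives $b\in (ca)b=c(ab)$, and since $ab$ may properly contain $0$ the set $c(ab)$ need not reduce to $\{0\}$. Your fallback to Lemma \ref{lembasic1}(b),(d) is circular: both of those items already \emph{assume} that $A$ is a superdomain, which is precisely what you are trying to establish. Note also that for the multiplicative multimonoid the axioms only guarantee $b\in 1\cdot b$, not $1\cdot b=\{b\}$, so one cannot cancel on that side either. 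In short, the obstacle you anticipated is real and your proposal does not resolve it; a correct argument for this direction (if one exists at the level of generality claimed) needs an idea beyond the cancellation facts available in Lemma \ref{lembasic1}. This is consistent with the paper's own caution in the preceding Remark that, for general superrings, it is not known whether maximal ideals are prime.
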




\section{Multipolynomials}\label{secpol}

Even if the rings-like multi-algebraic structure have been studied for more than 70 years, the developments of notions of polynomials in the ring-like multialgebraic structure seems to have a more significant development only from the last decade: for instance in \cite{jun2015algebraic} some notion of multi polynomials is introduced to obtain some applications to algebraic and tropical geometry, in \cite{ameri2019superring} a more detailed account of variants of concept of multipolynomials over hyperrings is applied to get a form of Hilbert's Basissatz.

Here we will stay close to the perspective in  \cite{ameri2019superring}: let $(R,+,-,\cdot,0,1)$ be a superring and set
$$R[X]:=\{(a_n)_{n\in\omega}\in R^\omega:\exists\,t\,\forall n(n\ge t\rightarrow a_n=0)\}.$$
Of course, we define the \textbf{degree} of $(a_n)_{n\in\omega}\ne\bm0$ to be the smallest $t$ such that $a_n=0$ for all $n>t$. 

Now define the binary multioperations $+,\cdot :  R[X]\times R[X] \to  \mathcal P^*(R[X])$, a unary operation $-:R[X]\rightarrow R[X]$ and elements $0,1\in R[X]$ by
\begin{align*}
 (c_n)_{n\in\omega}\in (a_n)_{n\in\omega}+(b_n)_{n\in\omega}&\mbox{ iff }\forall\,n(c_n\in a_n+b_n) \\
  (c_n)_{n\in\omega}\in((a_n)_{n\in\omega}\cdot (b_n)_{n\in\omega}&\mbox{ iff }\forall\,n
  (c_n\in a_0\cdot b_n+a_1\cdot b_{n-1}+...+a_n\cdot b_0) \\
  -(a_n)_{n\in\omega}&=(-a_n)_{n\in\omega} \\
  0&:=(0)_{n\in\omega} \\
  1&:=(1,0,...,0,...)
\end{align*}
For convenience, we denote elements of $R[X]$ by $\alpha=(a_n)_{n\in\omega}$. Beside this, we denote
\begin{align*}
 1&:=(1,0,0,...), \\
 X&:=(0,1,0,...), \\
 X^2&:=(0,0,1,0,...)
\end{align*}
etc. In this sense, our ``monomial'' $a_iX^i$ is denoted by $(0,...0,a_i,0,...)$, where $a_i$ is in the $i$-th position; in 
particular, we will denote ${\underline{b}} = (b,0,0,...)$ and we frequently identify $b \in R \leftrightsquigarrow 
{\underline{b}} \in R[X]$.

The properties stated  in the Lemma below immediately follows from the definitions involving $R[X]$:

\begin{lem}\label{lemperm}
 Let $R$ be a superring and $R[X]$ as above and $n,m\in \mathbb N$.
 \begin{enumerate}[a -]
  \item $\{X^{n+m}\}=X^n\cdot X^m$.
  \item For all $a\in R$, $\{aX^n\}= {\underline{a}}\cdot X^n$.
  \item Given $\alpha=(a_0,a_1,...,a_n,0,0,...)\in R[X]$, with with $\deg\alpha \leq n$ and $m\ge1$, we have
  $$\alpha X^m=(0,0,...,0,a_0,a_1,...,a_n,0,0,...)=a_0X^m+a_1X^{m+1}+...+a_nX^{m+n}.$$
  \item For $\alpha=(a_n)_{n\in\omega}\in R[X]$, with $\deg\alpha=t$, 
  $$\{\alpha\}=a_0\cdot1+a_1\cdot X+...+a_t\cdot X^t=a_0+X(a_1+a_2X+...+a_nX^{t-1}).$$
  \item $R[X]$ is a superdomain iff $R$ is a superdomain.
  \item $R[X]$ is a superring.
  \item The map $a \in R \mapsto {\underline{a}} = (a,0, \cdots,0, \cdots)$ defines a full embedding $R\rightarrowtail R[X]$.
  \item For an ordinary ring $R$ (identified with a strict suppering), the superring $R[X]$ is naturally isomorphic to (the superring associated to) the ordinary ring of polynomials in one variable over $R$.
   \end{enumerate}
\end{lem}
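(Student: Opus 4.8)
The plan is to prove Lemma \ref{lemperm} item by item, since each assertion follows quite directly from the definition of the multioperations on $R[X]$ together with the basic facts about superrings already established in Lemmas \ref{lembasic1} and \ref{factstrong2}. For items (a)--(d) I would compute directly with the coefficient sequences. For (a), note that $X^n$ is the sequence with $1$ in position $n$ and $0$ elsewhere; multiplying $X^n\cdot X^m$ via the convolution formula $c_k\in\sum_{i+j=k}a_ib_j$ forces every term to be $0$ except when $i=n,j=m$, where it is $1\cdot 1=1$, so the only element of the product is $X^{n+m}$. Item (b) is the same computation with one factor scaled by $\underline a$, using that $0\cdot b=\{0\}$. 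Item (c) is again a convolution computation: shifting by $X^m$ moves each coefficient $a_i$ from position $i$ to position $i+m$, and the displayed ``expanded'' form is then just the definition of the finite sum of the monomials $a_iX^{m+i}$, where one uses Lemma \ref{lembasic1}(a) to reorder freely. Item (d) follows from (b), (c) and the conventions on finite sums; the two displayed expressions agree by Lemma \ref{lembasic1}(c) (distributing $X$), which applies once we know $R[X]$ is full — but that fullness, for the sub-superring generated by $R$ and $X$, is itself elementary, so there is no circularity.

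For item (e), the direction ``$R$ a superdomain $\Rightarrow R[X]$ a superdomain'' is the classical leading-coefficient argument adapted to the multivalued setting: if $\alpha,\beta\ne\bm 0$ have degrees $s,t$ with leading coefficients $a_s,b_t\ne 0$, then every element $\gamma\in\alpha\cdot\beta$ has its degree-$(s+t)$ coefficient $c_{s+t}\in a_s\cdot b_t$, and since $R$ is a superdomain $0\notin a_s\cdot b_t$, so $c_{s+t}\ne 0$ and hence $\gamma\ne\bm 0$; thus $0\notin\alpha\cdot\beta$. The converse is immediate since $R\hookrightarrow R[X]$ (item (g)) and a sub-superring of a superdomain that contains $0,1$ and is closed under the operations inherits the no-zero-divisor property — or one simply notes $0\in a\cdot b$ in $R$ gives $0\in\underline a\cdot\underline b$ in $R[X]$.

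Item (f) — verifying that $R[X]$ is a superring — is the main obstacle, since one must check all five axioms: $(R[X],+,-,\bm 0)$ a commutative multigroup, $(R[X],\cdot,\bm 1)$ a commutative multimonoid, $\bm 0$ absorbing, weak distributivity, and the rule of signs. The commutative multigroup axioms for $+$ are checked coordinatewise and reduce directly to the corresponding axioms in $R$; the rule of signs and absorption are likewise coordinatewise and immediate. The delicate points are commutativity and especially associativity of $\cdot$ (multigroup/multimonoid axiom M3) and weak distributivity, where one must manipulate the nested convolution sums $c_n\in\sum_{i+j=n}a_ib_j$; here I would invoke Lemma \ref{lembasic1}(a),(c) and the ``generalized associativity'' packaging of finite sums of finite products set up just before Lemma \ref{lembasic1} to reduce everything to rearrangements of finitely many terms governed by the superring axioms of $R$. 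I expect this bookkeeping to be the only laborious part. Finally, item (g) is exactly Lemma \ref{factstrong2} applied to the coordinate map $a\mapsto\underline a$ once one checks it is injective and satisfies $f(a+b)=f(a)+f(b)$, $f(ab)=f(a)f(b)$ directly from the definitions (the $n=0$ coordinate carries the data and all higher coordinates are $0$); and item (h) is the observation that when $R$ is a strict ring all the multioperations above are single-valued and the convolution formula is the usual one, so the map identifying sequences with formal polynomials is an isomorphism of the associated superrings.
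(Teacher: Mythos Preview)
The paper does not actually prove this lemma: the sentence preceding it says only that the properties ``immediately follow from the definitions involving $R[X]$,'' and no proof environment appears. Your proposal is therefore far more detailed than anything in the paper, and your overall plan---coordinatewise computation for (a)--(d), the leading-coefficient argument for (e), axiom-by-axiom verification for (f), and direct checks for (g) and (h)---is the right one.

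There is one point to correct. For the second equality in item (d) you invoke Lemma~\ref{lembasic1}(c), which requires the ambient superring to be full, and you then claim the needed fullness ``for the sub-superring generated by $R$ and $X$'' is elementary. But $R[X]$ is \emph{not} full in general: the paper says so explicitly in the remark immediately following this lemma (with $R=K$ or $R=Q_2$ as counterexamples), and this is precisely why Lemma~\ref{lemfator} is needed later. The clean argument for (d) avoids distributivity altogether: by the first equality of (d) applied inductively to the shorter polynomial $\beta=(a_1,\dots,a_t,0,\dots)$, the inner sum $a_1+a_2X+\cdots+a_tX^{t-1}$ is already the singleton $\{\beta\}$; then item (c) with $m=1$ gives $X\cdot\beta$ as the shifted sequence, and adding $a_0$ recovers $\alpha$. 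The same caution applies in (f): do not lean on Lemma~\ref{lembasic1}(c) there either, since $R$ is not assumed full; the associativity of the convolution product and weak distributivity must be pushed through using only the superring axioms for $R$ (in particular only the inclusion $c(a+b)\subseteq ca+cb$), together with Lemma~\ref{lembasic1}(a) for reordering sums.

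A minor remark on (g): Lemma~\ref{factstrong2} records consequences of a map being a full morphism, not a criterion for it; what you actually need---and correctly describe---is the direct check that $\underline{a+b}=\underline a+\underline b$ and $\underline{ab}=\underline a\cdot\underline b$ as set equalities, which is immediate since only the degree-$0$ coordinate is in play.
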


Lemma \ref{lemperm} allow us to deal with the superring $R[X]$ as usual. In other words, we can assume that for $\alpha\in R[x]$, there exists 
$a_0,a_1,...,a_n\in R$ such that $\alpha=a_0+a_1X+...+a_nX^n$, and then, we can work simply denoting $\alpha=f(X)$, as usual. For example, combining the definitions and all facts above we get
$$(x-a)(x-b)=x^2+(a-b)x+ab=\{x^2+dx+e:d\in a-b\mbox{ and }e\in ab\}.$$

\begin{rem} 
If $R$ is a full superdomain, does not hold in general that $R[X]$ is also a full superdomain. In fact, even if $R$ is a 
hyperfield, there are examples, e.g. $R = K, Q_2$, such that $R[X]$ is not a full superdomain (see \cite{ameri2019superring}).
\end{rem}

\begin{defn}
 The superring $R[X]$ will be called the \textbf{superring of polynomials} with one variable over $R$. The elements of $R[X]$ will be called 
 polynomials. We denote $R[X_1,...,X_n]:=(R[X_1,...,X_{n-1}])[X_n]$.
\end{defn}

\begin{lem}[Adapted from Theorem 5 of \cite{ameri2019superring}]\label{degreelemma}
Let $R$ be a superring and $f,g\in R[X]\setminus\{0\}$.
\begin{enumerate}[i -]
    \item If $t(X)\in f(X)+g(X)$ and $f\ne-g$ then $$\min\{\deg(f),\deg(g)\}\le\deg(t)\le\max\{\deg(f),\deg(g)\}.$$
    \item If $R$ is a superdomain and $t(X)\in f(X)g(X)$, then $\deg(t)=\deg(f)+\deg(g)$. In particular, if $f_1(X),f_2(X),...,f_n(X)\ne0$ and $t(X)\in f_1(X)f_2(X)...f_n(X)$, then
    $$\deg(t)=\deg(f_1)+\deg(f_2)+...+\deg(f_n).$$
    \item (Partial Factorization) Let $R$ be a superdomain, $\deg(f)=n$ and $f\in (X-a_1)(X-a_n)...(X-a_p)$. Then $p=n$.
\end{enumerate}
\end{lem}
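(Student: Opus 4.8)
The plan is to handle the three items in order, using the degree behaviour of the multioperations in $R[X]$ together with the Basic Facts from Lemmas \ref{lembasic1} and \ref{degreelemma} itself in an inductive bootstrap. For item (i), write $f=(a_n)$, $g=(b_n)$ with $\deg f = s$, $\deg g = m$; I would first treat the case $s\ne m$, say $s<m$: then for $n>m$ we have $c_n\in a_n+b_n=0+0=\{0\}$, so $\deg t\le m$; and the $m$-th coordinate satisfies $c_m\in a_m+b_m=0+b_m=\{b_m\}$ with $b_m\ne 0$ by M2, so $\deg t=m=\max\{s,m\}$, and in particular $\deg t\ge\min\{s,m\}$. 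When $s=m$, the bound $\deg t\le\max$ is immediate from $c_n\in a_n+b_n$ for $n>s$; the lower bound $\deg t\ge\min\{s,m\}=s$ is exactly where the hypothesis $f\ne -g$ enters: if $\deg t<s$ then in particular $c_s\in a_s+b_s$ with $c_s=0$, i.e. $0\in a_s+b_s$, so $b_s=-a_s$; iterating this coordinatewise (using M1/M2 to extract the forced value) forces $b_n=-a_n$ for all $n\le s$, hence $g=-f$, a contradiction.

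For item (ii), I would reprove the leading-coefficient computation of Theorem 5 of \cite{ameri2019superring} in our notation. Write $\deg f=s$, $\deg g=m$. For $n>s+m$ every summand $a_i b_{n-i}$ in $c_n\in\sum_{i=0}^n a_i b_{n-i}$ has either $i>s$ or $n-i>m$, hence is $\{0\}$ (absorbing property (iii) of a superring), so $c_n=0$ and $\deg t\le s+m$. For $n=s+m$, the only possibly nonzero summand is $a_s b_m$, and since $R$ is a superdomain and $a_s,b_m\ne 0$ we get $0\notin a_s b_m$, so the $(s+m)$-th coordinate of $t$ is nonzero and $\deg t=s+m$. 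The $n$-fold statement follows by induction on $n$ using associativity of the product (Lemma \ref{lembasic1}(a)) and the definition of the finite product: any $t\in f_1\cdots f_n$ lies in $t'\cdot f_n$ for some $t'\in f_1\cdots f_{n-1}$, so $\deg t=\deg t'+\deg f_n=\sum_{i<n}\deg f_i$.

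Item (iii) is then essentially a corollary of (ii). Given $\deg f=n$ and $f\in (X-a_1)(X-a_2)\cdots(X-a_p)$, each factor $X-a_j$ has degree $1$ (its leading coordinate is $1\ne 0$), so by the $p$-fold version of (ii) we get $n=\deg f=\sum_{j=1}^p\deg(X-a_j)=p$. The one subtlety worth flagging is that (ii) requires the intermediate products to be nonzero, which holds here because $R$ is a superdomain, hence so is $R[X]$ by Lemma \ref{lemperm}(e), so no product of the nonzero polynomials $X-a_j$ can contain $0$; this is what lets the induction go through and pins down $p=n$.

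The main obstacle I anticipate is the lower bound in item (i) when $\deg f=\deg g$: one has to be careful that $0\in a_s+b_s$ really does force $b_s=-a_s$ (this uses axiom M1 together with M2: $0\in a_s+b_s$ gives $a_s\in 0-b_s=\{-b_s\}$), and then that the same forcing propagates to all lower coordinates to conclude $g=-f$ rather than merely agreeing in top degree. Everything else is a routine unwinding of the coordinatewise definitions of $+$ and $\cdot$ on $R[X]$ together with the superdomain hypothesis.
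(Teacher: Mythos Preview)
Your treatment of items (ii) and (iii) is correct and essentially identical to the paper's: compute the top coefficient of a product as $a_s b_m$, invoke the superdomain hypothesis to see it is nonzero, and then induct; item (iii) is the direct corollary you describe.

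The genuine gap is in item (i), precisely at the point you yourself flagged as the main obstacle. Your ``iteration'' does not go through: from $\deg t<s$ you correctly get $c_s=0\in a_s+b_s$, hence $b_s=-a_s$, and more generally $b_n=-a_n$ for all $n$ with $\deg t<n\le s$. But for $n\le\deg t$ you have no information that $c_n=0$, so nothing forces $b_n=-a_n$ there; you cannot conclude $g=-f$. In fact the lower bound in (i) is simply false as stated, already over ordinary fields: with $f(X)=X^2+X$ and $g(X)=-X^2$ one has $f\ne -g$ yet $t=f+g=X$ has degree $1<\min\{\deg f,\deg g\}=2$. The paper's own one-line argument (``since $f\ne -g$, $0\notin a_n+b_n$'') makes the same unjustified leap from $f\ne -g$ to non-cancellation of leading coefficients, so you are not doing worse than the source here; but your proposed fix via coordinatewise propagation cannot work, because the claim it is meant to establish is false. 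Only the upper bound in (i), and the equality $\deg t=\max\{\deg f,\deg g\}$ in the unequal-degree case, survive.
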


Let $f(X)=a_0+...+a_nX^n$ and $g(X)=b_0+...+b_mX^m$ with $a_n,b_m\ne0$. We establish the following notation: for $k\in\mathbb N$ with $k\le\deg(f)$ we define $(f)_k:=a_k$ (the $k$-th coefficient of $f$).

\begin{proof}[Proof of Lemma \ref{degreelemma}]
For item (i), we have
$$f(X)+g(X)=(a_0+b_0)X+...+(a_n+b_m)X^m.$$
Since $f(X)\ne-g(X)$, $0\notin a_n+b_n$, establishing item (i).

Now, suppose without loss of generality that $m\ge n$ and in this case, write
$$f(X)=a_0+...+a_mX^m$$
with $a_k=0$ for $n<k\le m$. We have $(fg)_{m+n}\in a_nb_m$ and since $R$ is a superdomain, $(fg)_{m+n}\ne0$. This and induction proves item (ii).

For item (iii), let $g\in(X-a_1)(X-a_n)...(X-a_p)$. By item (ii) and induction, $\deg(g)=p$. Then $n=\deg(f)=p$.
\end{proof}

Despite the fact that $R[X]$ is not full in general, we have a powerful Lemma to get around this situation.

\begin{lem}\label{lemfator}
 Let $R$ be a superring and $f\in R[X]$ with $f(X)=a_nX^n+...+a_1X+a_0$. Then:
 \begin{enumerate}[i -]
     \item For all $b,c\in R$, $(b+cX)f(X)=bf(X)+cXf(X)$.
     \item For all $b,c\in R$ and all $p,q\in\omega$ with $p<q$, $$(bX^p+cX^q)f(X)=bX^pf(X)+cX^pf(X).$$
     \item For all $b,c,d\in R$ and all $p,q,r\in\omega$ with $p<q<r$, $$(bX^p+cX^q+dX^r)f(X)=bX^pf(X)+cX^pf(X)+dX^rf(X).$$
     \item For all $b_0,....,b_m\in R$,
     \begin{align*}
       (b_0+b_1X+b_2X^2+...+b_mX^m)f(X)=\\
     b_0f(X)+(b_1X+b_2X^2+...+b_mX^m)f(X).  
     \end{align*}
     \item For all $b_0,....,b_m\in R$,
     \begin{align*}
       (b_0+b_1X+b_2X^2+...+b_{m-1}X^{m-1}+b_mX^m)f(X)=\\
     (b_0+b_1X+b_2X^2+...+b_{m-1}X^{m-1})f(X)+b_mX^mf(X).  
     \end{align*}
     \item For all $b_0,....,b_m\in R$,
     \begin{align*}
       (b_0+b_1X+...+b_jX^j+b_{j+1}X^{j+1}+...+b_mX^m)f(X)=\\
     (b_0+b_1X+...+b_jX^j)f(X)+(b_{j+1}X^{j+1}+...+b_mX^m)f(X).  
     \end{align*}
     In particular, if $d\in R$, $g(X)\in R[X]$ and $r>\deg(g(X))$, then
     $$(g(X)+dX^r)f(X)=g(X)f(X)+dX^rf(X).$$
 \end{enumerate}
\end{lem}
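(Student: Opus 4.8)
The plan is to establish the items in order, reducing each to its immediate predecessors, so that the only genuinely new content is item (i); items (ii) and (iii) are variants of (i) obtained by multiplying through by powers of $X$, and items (iv)--(vi) are inductive assemblies of (i)--(iii). Throughout, one inclusion is free: by the weak distributive law for superrings, $(b+cX)f(X)\subseteq bf(X)+cXf(X)$ always holds, so in every item it is the reverse inclusion that must be proven. Note also that $bf(X)$ really means $\underline b\cdot f(X)$ and, since $\underline b$ is a single polynomial (a ``monomial'' of degree $0$), the product $\underline b\cdot f(X)$ is computed coefficientwise: $(\underline b\cdot f)_k\in b\cdot a_k$ for each $k$. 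This is the key simplification — multiplication by a monomial is ``almost single-valued'' in a controlled, coordinatewise way, even though $R[X]$ is not full.

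For item (i), I would argue coefficientwise. Take $h(X)\in bf(X)+cXf(X)$; write $h=p+q$ with $p\in\underline b\cdot f$ and $q\in\underline c X\cdot f$. Then $p_k\in b\cdot a_k$ for all $k$, while $q_0=0$ and $q_k\in c\cdot a_{k-1}$ for $k\ge 1$, so $h_k\in p_k+q_k\subseteq b a_k+c a_{k-1}$ (with the convention $a_{-1}=0$). On the other side, the coefficient of $X^k$ in $(b+cX)f(X)$ is, by the definition of the product in $R[X]$, exactly $(b+cX)_0\cdot a_k+(b+cX)_1\cdot a_{k-1}=b\cdot a_k+c\cdot a_{k-1}$ — here I use that the polynomial $b+cX$ has coefficient set $\{b\}$ in degree $0$ (a singleton, since $R$'s product with $1$... wait, rather: the coefficients of the \emph{polynomial} $\underline b+\underline c X$ are the singletons $\{b\}$ and $\{c\}$, because addition in $R[X]$ here is just placing $b$ and $cX$ in separate slots). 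Hence $h_k$ lies in the $k$-th coefficient set of $(b+cX)f(X)$ for every $k$, which by the coefficientwise definition of the multioperation on $R[X]$ means $h\in(b+cX)f(X)$. This gives the reverse inclusion and proves (i).

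Items (ii) and (iii) follow from (i) by a shift: multiply (i) through by $X^p$ on both sides using Lemma \ref{lemperm}(a),(c) to absorb the $X^p$ into the monomials, and for (iii) apply the two-term case twice (first split off $dX^r$, then handle the remaining $bX^p+cX^q$ by (ii)). Items (iv) and (v) are the special cases of (vi) with $j=0$ and $j=m-1$, so it suffices to prove (vi), which I would do by induction on the number of terms: group $b_0+\cdots+b_jX^j$ as a single polynomial $g_1$ and $b_{j+1}X^{j+1}+\cdots+b_mX^m$ as $g_2$, peel off the top monomial $b_mX^m$ using the two-term splitting (iii) (valid since $\deg$ of the rest is $<m$), and recurse. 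The final ``in particular'' clause is exactly this with $g_1=g(X)$, $g_2=dX^r$, using $r>\deg g$ so the degrees don't overlap.

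The main obstacle I anticipate is purely bookkeeping rather than conceptual: making the coefficientwise argument in (i) airtight requires being careful that the coefficient sets of the \emph{polynomial} $\underline b+\underline c X$ (as opposed to the multiset sum $b+cX$ in $R$) are genuine singletons, so that the product $(\underline b+\underline c X)\cdot f$ has coefficient set $b\cdot a_k + c\cdot a_{k-1}$ \emph{exactly} (not a proper subset), and then that membership in $R[X]$'s multioperation really is decided slot-by-slot — both of which are immediate from the definitions of $+,\cdot$ on $R[X]$ but must be invoked explicitly. The shifts in (ii)--(iii) and the inductions in (iv)--(vi) are then routine, modulo keeping the degree inequalities straight so that no two monomial slots collide.
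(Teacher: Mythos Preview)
Your proposal is correct and follows essentially the same approach as the paper: the core is the coefficientwise verification of (i), with (ii)--(vi) obtained as variants and inductive consequences. The only cosmetic difference is that the paper simply re-runs the coefficientwise argument of (i) directly for (ii) and (iii) rather than deriving them by shifting (i) by $X^p$ (note your shift as stated only literally gives $q=p+1$, but the same computation handles any $q>p$, so this is not a substantive gap).
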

\begin{proof}
 $ $
 \begin{enumerate}[i -]
     \item We can suppose without loss of generality that $b,c\ne0$. Here is convenient keep in mind that an element in $R[X]$ is a sequence of elements in $R$. Denote $b+cX=(b_n)_{n\in\omega}\in R[X]$ with $b_0=b$, $b_1=c$ and $b_n=0$ for all $n\ge2$. By definition, for an element $h(X)\in R[X]$, say
     $$h(X)=e_0+e_1X+...+e_{n+1}X^{n+1}=(e_n)_{n\in\omega}\in R[X],$$
     we have
     $$h(X)\in(b+cX)f(X)\mbox{ iff }e_p\in\sum^p_{j=0}a_jb_{p-j},\,p\in\omega.$$
     Since $a_j=0$ for all $j>n$ and $b_j=0$ for all $j\ge2$, we have $e_p=0$ for all $p>n+1$. Moreover, by the same reason we have that $e_0\in a_0b_0$, $e_{n+1}\in a_nb_1$ and for $0< p< n+1$, that
     $$e_p\in\sum^p_{j=0}a_jb_{p-j}=a_pb_0+a_{p-1}b_1.$$
     Summarizing, we conclude that 
     \begin{align*}
         h(X)\in(b+cX)f(X)&\mbox{ iff }e_0\in a_0b_0,\,e_{n+1}\in a_nb_1\mbox{ and }
         e_p\in a_pb_0+a_{p-1}b_1\mbox{ for }0< p< n+1.\tag{*}
     \end{align*}
     On the other hand, we have that
     \begin{align*}
         bf(X)+cXf(X)&=
         b[a_nX^n+...+a_1X+a_0]+cX[a_nX^n+...+a_1X+a_0] \\
         &=(a_nbX^n+...+a_1bX+a_0b)+(a_ncX^{n+1}+...+a_1cX^2+a_0X) \\
         &=a_ncX^{n+1}+(a_nb+a_{n-1}c)X^n+...+(a_2b+a_1c)X^2+(a_1b+a_0c)X+a_0b\tag{**}.
     \end{align*}
     Joining ($*$) and ($**$) we conclude that
     $$h(X)\in(b+cX)f(X)\mbox{ iff }h(X)\in bf(X)+cXf(X).$$
     
     \item Just use the same reasoning of item (i).
     
     \item Using distributivity, item (i) and (ii) we conclude that
     \begin{align*}
         \tag{***}(bX^p+cX^q+dX^r)f(X)\subseteq bX^pf(X)+cX^pf(X)+dX^rf(X)
     =(bX^p+cX^p)f(X)+dX^rf(X).
     \end{align*}
     Now with ($***$) on hand, just proceed with the same reasoning of ($*$) and ($**$) to obtain the desired. 
     
     \item This is an immediate consequence of item (iii) and a convenient induction.
     
     \item This is an immediate consequence of item (iii) and a convenient induction.
     
     \item This is just the combination of previous items.
 \end{enumerate}
\end{proof}

\begin{teo}[Euclid's Division  Algorithm (3.4 in \cite{davvaz2016codes})]\label{euclid}
 Let $K$ be a superfield. Given polynomials $f(X),g(X)\in K[X]$ with $g(X)\ne0$, there exists $q(X),r(X)\in 
K[X]$ such that $f(X)\in q(X)g(X)+r(X)$, with $\deg r(X)<\deg g(X)$ or $r(X)=0$.
\end{teo}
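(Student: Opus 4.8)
The plan is to mimic the classical Euclidean algorithm proof by induction on $\deg f(X)$, being careful about the fact that sums and products in $K[X]$ are multivalued and that $K[X]$ need not be full. First I would dispose of the trivial cases: if $f(X)=0$ or $\deg f(X)<\deg g(X)$, take $q(X)=0$ and $r(X)=f(X)$, since then $f(X)\in 0\cdot g(X)+f(X)$ by the multigroup axioms (recall $0\in q(X)g(X)$ when $q=0$ and $f\in 0+f$). So assume $n:=\deg f(X)\ge m:=\deg g(X)$, and write $f(X)=a_nX^n+\dots+a_0$ with $a_n\ne 0$, $g(X)=b_mX^m+\dots+b_0$ with $b_m\ne 0$. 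Since $K$ is a superfield, $b_m$ is invertible; fix $c\in K$ with $1\in b_m\cdot c$, so that $a_n\in c\,b_m\cdot a_n$ and the ``leading monomial'' $c\,a_n X^{n-m}$ multiplied by $g$ produces, in its top-degree coefficient, the value $a_n$.

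The core step is to subtract off this leading term. I would consider the polynomial $h(X):=c\,a_n X^{n-m}\,g(X)$, which by Lemma \ref{degreelemma}(ii) has degree exactly $n$ (here using that $K$ is a superdomain), and whose degree-$n$ coefficient lies in $c\,a_n b_m\ni a_n$; choose the representative $h(X)$ whose degree-$n$ coefficient is precisely $a_n$. Then I want to find some $f_1(X)\in f(X)-h(X)$ with $\deg f_1(X)<n$: since the degree-$n$ coefficients of $f$ and $h$ are both $a_n$, and $0\in a_n-a_n$, I can pick $f_1$ with vanishing degree-$n$ coefficient; Lemma \ref{degreelemma}(i) guarantees no higher degree appears, so $\deg f_1(X)<n=\deg f(X)$ (or $f_1=0$). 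By the induction hypothesis applied to $f_1$ and $g$, there are $q_1(X),r(X)$ with $f_1(X)\in q_1(X)g(X)+r(X)$ and $\deg r(X)<\deg g(X)$ or $r(X)=0$. Now set $q(X):=c\,a_n X^{n-m}+q_1(X)$. It remains to verify $f(X)\in q(X)g(X)+r(X)$, and this is exactly where fullness fails and care is needed.

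The main obstacle is precisely this last verification: I would like to expand $q(X)g(X)=(c\,a_nX^{n-m}+q_1(X))g(X)$ as $c\,a_nX^{n-m}g(X)+q_1(X)g(X)$, but in a non-full superring the distributive law only gives the inclusion $\supseteq$ in general, not equality; fortunately the weak distributive law gives $q(X)g(X)\subseteq c\,a_nX^{n-m}g(X)+q_1(X)g(X)$, which is the wrong direction. The fix is to use Lemma \ref{lemfator}(vi): since $\deg q_1(X)<n-m$ (shrink $q_1$'s degree if necessary — it controls degrees below $n$) we are in the situation ``$g(X)+dX^r$'' with the lower-degree part $q_1$ and the genuinely higher monomial $c\,a_nX^{n-m}$, and Lemma \ref{lemfator} gives the honest equality $(q_1(X)+c\,a_nX^{n-m})g(X)=q_1(X)g(X)+c\,a_nX^{n-m}g(X)=q_1(X)g(X)+h(X)$. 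Therefore, chasing memberships: $f(X)\in f_1(X)+h(X)$ (by choice of $f_1$ and multigroup axiom M1, since $f_1\in f-h$ gives $f\in f_1+h$), and $f_1(X)\in q_1(X)g(X)+r(X)$, so $f(X)\in q_1(X)g(X)+r(X)+h(X)=q_1(X)g(X)+h(X)+r(X)$ (using Lemma \ref{lembasic1}(a) to reorder), and this equals $q(X)g(X)+r(X)$ by the Lemma \ref{lemfator} identity. Hence $f(X)\in q(X)g(X)+r(X)$ with $r$ of the required degree, completing the induction.

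One subtlety worth flagging in the write-up: to guarantee $\deg q_1(X)<n-m$ so that Lemma \ref{lemfator}(vi) applies cleanly, I should run the induction on the statement ``for all $f$ with $\deg f\le N$ there exist $q,r$ with $f\in qg+r$, $\deg r<\deg g$ or $r=0$, and $\deg q\le N-\deg g$ or $q=0$'', i.e. carry the degree bound on $q$ along in the inductive hypothesis; this is automatic in the classical case and costs nothing here beyond bookkeeping. With that strengthened hypothesis, $\deg q_1\le (n-1)-m<n-m$, exactly as needed, and everything goes through.
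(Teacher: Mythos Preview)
Your proof is correct and follows essentially the same approach as the paper: induction on $\deg f$, subtracting the leading monomial $a_nb_m^{-1}X^{n-m}g(X)$, and invoking Lemma~\ref{lemfator}(vi) to recombine the quotient. The only cosmetic difference is that you explicitly strengthen the induction hypothesis to carry the bound $\deg q\le \deg f-\deg g$, whereas the paper extracts this a posteriori from $\deg t(X)=\deg q(X)+m$ via Lemma~\ref{degreelemma}(ii).
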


\begin{proof}
This is a generalized version of Theorem 3.4 in \cite{davvaz2016codes}, which states Euclid's Algorithm for hyperfields.
Write
\begin{align*}
 f(X)&=a_nX^n+ \cdots +a_1X+a_0\\
 g(X)&=b_mX^m+ \cdots +b_1X+b_0
\end{align*}
with $a_n,b_m\ne0$ and let $b_m^{-1} \in K$ be an element satisfying $1 \in b_m\cdot b_m^{-1}$.

We proceed by induction on $n$. Note that if $m\ge n$, then is sufficient take $q(X)=0$ 
and $r(X)=f(X)$, so we can suppose $m\le n$. If $m=n=0$, then $f(X)=a_0$ and $g(X)=b_0$ are both non zero constants, so is sufficient take $q(X)\in a_0\cdot b_0^{-1}$ and $r(X)=0$.

Now, suppose $n\ge1$. Then, since $0\in a-a$, there exist some $t(X)\in f(X)-a_nb_m^{-1}X^{n-m}g(X)$ with $\deg t(X)<n$. So, by induction hypothesis,
$$t(X)\in q(X)g(X)+r(X)\mbox{ for some }q(X),r(X)\in R[X]\mbox{ with }\deg r(X)<\deg g(X)\mbox{ or }r(X)=0.$$
Therefore, $\deg t(X) = \deg q(X) + m$ and since $f(X)\in t(X)+a_nb_m^{-1}X^{n-m}g(X)$, we have
\begin{align*}
    f(X)&\in t(X)+a_nb_m^{-1}X^{n-m}g(X) \\
    &\subseteq q(X)g(X)+a_nb_m^{-1}X^{n-m}g(X)+r(X).
\end{align*}

But since $\deg q(X) = \deg t(X) - m < n - m$, we have (see Lemma \ref{lemfator} (vi)) that
$$[q(X)+a_nb_m^{-1}X^{n-m}]g(X) = q(X)g(X)+a_nb_m^{-1}X^{n-m}g(X).$$
So there exist some $q'(X)\in q(X)+a_nb_m^{-1}X^{n-m}$ with $f(X)\in q'(X)g(X)+r(X)$ and $\deg r(X)<\deg g(X)$ or $r(X)=0$, completing the proof.
\end{proof}

\begin{rem}
$ $
 \begin{enumerate}[i -]
  \item Note that the polynomials $q$ and $r$ of Theorem \ref{euclid} are not unique in general: if $f\in gq+r$, then 
$f\in g(q+1-1)+r$ and $f\in gq+(r+1-1)$, then, if $\{0\}\ne1-1$, we have many $q$'s and $r$'s.

 \item However, if $R$ is a ring, then Theorem \ref{euclid} provide the usual Euclid Algorithm, with the uniqueness of the quotient and remainder.
\end{enumerate}
\end{rem}

\begin{teo}[Adapted from Theorem 6 of \cite{ameri2019superring}]\label{teoPID}
 Let $F$ be a full superfield. Then $F[X]$ is a principal ideal superdomain.
\end{teo}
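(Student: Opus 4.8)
The plan is to mimic the classical argument that a field's polynomial ring is a PID, using the Euclidean algorithm (Theorem \ref{euclid}) to run the standard "pick a nonzero element of minimal degree" argument, while being careful about the fact that sums are multivalued. Since $F$ is a full superfield, Lemma \ref{lem1} and Lemma \ref{lembasic1} apply, and by Lemma \ref{lemperm}(e) together with the hypothesis that $F$ is a superdomain, $F[X]$ is a superdomain; so the only content is showing every ideal is principal.

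First I would dispose of the trivial ideals: $\langle 0 \rangle = \{0\}$ and, if $I$ contains an invertible element, then $I = F[X]= \langle 1 \rangle$ (here one needs that the units of $F[X]$ are exactly the $\underline{a}$ with $a \in F^*$, which follows from the degree formula in Lemma \ref{degreelemma}(ii): a factorization $1 \in fg$ forces $\deg f = \deg g = 0$). So assume $I \neq \{0\}$ is a proper ideal, pick $g \in I \setminus \{0\}$ of minimal degree, and set $J := \langle g \rangle$. Since $F$ is full, by Definition \ref{char}(iii) we have $J = F[X]\cdot g$, and clearly $J \subseteq I$. For the reverse inclusion, take any $f \in I$; applying Theorem \ref{euclid} write $f(X) \in q(X)g(X) + r(X)$ with $r = 0$ or $\deg r < \deg g$. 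Then by axiom M1 for the multigroup $(F[X],+,-,0)$, there is some $s \in q(X)g(X)$ with $r \in f + (-s)$; since $s \in F[X]\cdot g = J \subseteq I$ and $f \in I$, ideal closure gives $r \in I$. Minimality of $\deg g$ forces $r = 0$, hence $f \in q(X)g(X) \subseteq F[X]\cdot g = J$. Therefore $I \subseteq J$, and $I = \langle g \rangle$ is principal.

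The step I expect to be the main obstacle — and the place where the multivalued setting genuinely differs from the classical proof — is the manipulation "$f \in qg + r \Rightarrow r \in f - s$ for some $s \in qg$, and $r \in I$": one must extract, via the multigroup axioms (M1, M3) and the ideal closure conditions $I + I \subseteq I$, $F[X]\cdot I \subseteq I$, that $r$ lands back in $I$, rather than merely that "$r = f - qg$" as an equality. Fullness of $F$ is what makes $\langle g \rangle = F[X]\cdot g$ (no infinite sums needed), which keeps this bookkeeping manageable; without fullness the generated ideal would be $\sum F[X]\,g$ and the argument would need an extra layer. A secondary point to check carefully is that the minimal-degree choice is legitimate — i.e. that $\deg$ takes values in $\mathbb{N}$ on $I \setminus \{0\}$, which is immediate from the definition — and that the degree-$0$ base case (a nonzero constant generates everything) really does use invertibility in $F$.
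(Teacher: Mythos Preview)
Your proof is correct and follows essentially the same route as the paper's: handle the trivial ideals, pick a nonzero element $g\in I$ of minimal degree, apply Euclid (Theorem \ref{euclid}), extract the remainder via the multigroup axiom M1, show $r\in I$ by ideal closure, and conclude $r=0$ by minimality so that $f\in qg\subseteq\langle g\rangle$.

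One small caveat: your appeal to Definition \ref{char}(iii) to get $\langle g\rangle=F[X]\cdot g$ from fullness of $F$ is not quite right as stated, since that remark requires the ambient superring (here $F[X]$) to be full, and the paper explicitly warns that $F[X]$ need not be full even when $F$ is. Fortunately this equality is never actually needed in your argument: you only use $qg\subseteq I$ (which follows from $g\in I$ and the ideal axiom $F[X]\cdot I\subseteq I$) and $qg\subseteq\langle g\rangle$ (same reason). In fact the equality $\langle g\rangle=F[X]\cdot g$ drops out as a \emph{consequence} of the proof, exactly as the paper records in its final line.
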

\begin{proof}
Let $I$ be a ideal of $F[X]$. If $I=0$ then $I=\langle0\rangle$ and if there is some $a\in F\setminus\{0\}$ with $a\in I$, then $I=F[X]=\langle 1\rangle$ (because $F$ is full).

Now let $p(X)\in I$ be a polynomial with minimal degree $m\ge1$. Let $f(X)\in I$ be another polynomial. By Euclid's Algorithm, there exists $q(X),r(X)\in F[X]$ with $f(X)\in p(X)q(X)+r(X)$ and $r(X)=0$ or $\deg(r)<\deg(p)=m$. Since $f,p\in I$ and $r(X)\in f(X)-p(X)q(X)$, we have $r\in I$. Note that by the minimality of $m$, all nonzero polynomial in $f(X)-p(X)q(X)$ has degree at least $m$. If $r\ne0$ then 
$$\min\{\deg f,\deg(p)+\deg(q)\}\le\deg r\le\max\{\deg f,\deg(p)+\deg(q)\}.$$
In particular $\deg(r)\ge m$ (because $\deg(f)\le m$), contradicting $deg(r)<m$. Hence $r=0$ and $I=\langle p\rangle$. In particular, $I=F[X]\cdot p(X)$.
\end{proof}

\section{Evaluation and Roots}

Let $R, S$ be  superrings and $h : R \to S$ be a morphism. Then $h$ extends naturally to a morphism in the superrings multipolynomials $h^X : 
R[X] \to S[X]$:
$$(a_n)_{n \in \mathbb{N}} \in R[X] \ \mapsto \ (h(a_n))_{n \in \mathbb{N}} \in S[X]$$

Now let $s \in S$. We define the $h$-\textbf{evaluation} of $s$ at $f(X)\in R[X]$ with $f(X)=a_0+a_1X+...+a_nX^n$ by
$$f^h(s)=ev^h(s,f):=\{s'\in S : s'\in h(a_0)+ h(a_1).s+h(a_2).s^2+...+h(a_n).s^n\}.$$
We define the $h$-\textbf{evaluation} for a subset $I\subseteq S$ by
$$f^h(I)=\bigcup_{s\in I}f^h(s).$$

In particular if $S\supseteq R$ are superrings and $\alpha\in S$, we have the \textbf{evaluation} of $\alpha$ at  $f(X)\in R[X]$ by
$$f(\alpha,S)=ev(\alpha,f,S)=\{b\in S: b \in a_0+a_1\alpha+a_2\alpha^2+...+a_n\alpha^n\}\subseteq S.$$
Note that the evaluation \textbf{depends} on the choice of $S$. When $S=R$ we just denote $f(\alpha,R)$ by $f(\alpha)$. 

A \textbf{root} of $f$ in $S$ is an element $\alpha\in S$ such that $0\in ev(\alpha,f,S)$. In this case we say that $\alpha$ is \textbf{$S$-algebraic} over $R$. An \textbf{effective root} of $f$ in $S$ is an element $\alpha\in S$ such that $f\in(X-\alpha)\cdot g(X)$ for some $g(X)\in R[X]$. A superring $R$ is \textbf{algebraically closed} if every non constant polynomial in $R[X]$ has a root in $R$. 

Observe that, if $F$ is a field, the evaluation of $F[X]$ as a ring coincide with the usual evaluation, and, of course, root and effective roots are the same thing. Therefore, if $F$ is algebraically closed as hyperfield and superfield, then will be algebraically closed in the usual sense. 

\begin{rem}  \label{unexpected-rem}
The  expansion of the above field-theoretical concepts to the multialgebraic theory  of superfields (hyperfields, in particular)  brings new phenomena:

\begin{enumerate}[i-]

\item (Polynomials can have infinite roots):
Let $F$ be a infinite pre-special hyperfield  (\cite{ribeiro2016functorial}). Then $F$ has characteristic $0$, $a^2=1$ for all $a\ne0$ so the polynomial $f(X)=X^2-1$ has infinite roots (i.e, $0\in ev(f,\alpha)$ for all $\alpha\in\dot F$).

\item (Finite hyperfields can be algebraically closed). The hyperfield $K=\{0,1\}$ is algebraically closed. In fact, if $p(X)=a_0+a_1X+a_2X^2+...+a_nX^n\in K[X]$, with $a_n\ne0$, then $0\in p(0)$ (if $a_0=0$) or $p(1)=K$, since $1+1=\{0,1\}$.

\end{enumerate}
\end{rem}

We have good results concerning irreducibly (see for instance, Theorem \ref{lemquadext} below). These results are the key to the development of superfields extensions, which leads us to some kind of algebraic closure.

\begin{defn}[Irreducibility]
Let $R$ be a superfield and $f,d\in R[X]$. We say that $d$ divides $f$ if and only if $f\in\langle d\rangle$, and denote $d|f$. 
We say that $f$ is \textbf{irreducible} if $\deg f\ge1$ and $u|f$ for some $u\in R[X]$ (i.e, $f\in\langle u\rangle$), then 
$\langle f\rangle=\langle u\rangle$. 
\end{defn}

\begin{teo}\label{lemquadext2}
Let $F$ be a full superfield and $p(X)\in F[X]$ be an irreducible polynomial. Then $\langle p(X)\rangle$ is a maximal ideal.
\end{teo}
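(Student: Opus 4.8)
\textbf{Proof plan for Theorem \ref{lemquadext2}.}

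The plan is to mimic the classical argument that an irreducible element in a principal ideal domain generates a maximal ideal, using the results already established for superfields. First I would invoke Theorem \ref{teoPID}: since $F$ is a full superfield, $F[X]$ is a principal ideal superdomain, so every ideal $\mathfrak{a}$ with $\langle p(X)\rangle \subseteq \mathfrak{a} \subseteq F[X]$ is of the form $\mathfrak{a} = \langle u(X)\rangle$ for some $u(X) \in F[X]$. The containment $\langle p(X)\rangle \subseteq \langle u(X)\rangle$ says precisely that $p \in \langle u \rangle$, i.e. $u \mid p$. By the definition of irreducibility applied to $p$, this forces either $\langle u \rangle = \langle p \rangle$ (so $\mathfrak{a} = \langle p(X)\rangle$) or else we must rule out the remaining case and conclude $\mathfrak{a} = F[X]$.

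The delicate point is the second branch: the definition of irreducibility as stated only guarantees that $u \mid p$ implies $\langle f \rangle = \langle u \rangle$, so I would first argue that whenever $u \mid p$ we get $\langle u \rangle = \langle p \rangle$, and separately handle the units. Concretely, I would show that if $u$ is a nonzero constant (equivalently, an invertible element of $F[X]$, since $F$ is full so every nonzero element of $F$ is a unit), then $\langle u \rangle = F[X]$ by Lemma \ref{lem1}(iv). So the real task is to show there is no intermediate ideal $\langle u \rangle$ strictly between $\langle p \rangle$ and $F[X]$ with $\langle u \rangle \neq \langle p \rangle$: but any such $\langle u \rangle$ has $u \mid p$, and irreducibility of $p$ gives $\langle u \rangle = \langle p \rangle$, a contradiction unless $\langle u \rangle = F[X]$. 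To pin down that $\langle u \rangle = F[X]$ precisely when $u$ is a unit, I would use the degree bounds from Lemma \ref{degreelemma}: from $p \in \langle u \rangle = F[X]\cdot u$ (the last line of Theorem \ref{teoPID}) we get $p \in h(X)u(X)$ for some $h$, and part (ii) of Lemma \ref{degreelemma} gives $\deg p = \deg h + \deg u$; combined with $u \mid p$ and $p \mid u$ failing, one gets $\deg u = 0$, hence $u$ is a unit.

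I also need $1 \notin \langle p(X) \rangle$, i.e. that $\langle p(X) \rangle$ is a proper ideal, which follows from $\deg p \ge 1$: if $1 \in \langle p \rangle = F[X] p$ then $1 \in h(X)p(X)$ for some $h$, forcing $0 = \deg 1 = \deg h + \deg p \ge 1$ by Lemma \ref{degreelemma}(ii), a contradiction (here I use that $F$, being a superfield, is a superdomain, so Lemma \ref{degreelemma}(ii) applies). Assembling these: $\langle p(X)\rangle$ is proper, and any ideal containing it is either $\langle p(X)\rangle$ itself or all of $F[X]$, which is exactly maximality.

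The step I expect to be the main obstacle is the careful bookkeeping in the second branch — translating "$u \mid p$ but $\langle u \rangle \neq \langle p \rangle$" into "$u$ is a unit" — because the definition of irreducibility given is phrased slightly awkwardly (it asserts $\langle f \rangle = \langle u \rangle$ outright rather than a dichotomy), so I must make sure the degree argument via Lemma \ref{degreelemma}(ii) together with the principality from Theorem \ref{teoPID} genuinely closes the gap without secretly assuming $F[X]$ is full (which the Remark after Lemma \ref{lemperm} warns it need not be).
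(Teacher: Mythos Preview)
Your overall strategy is exactly the paper's: invoke Theorem \ref{teoPID} to write any ideal $\mathfrak{a}\supseteq\langle p\rangle$ as $\langle u\rangle$, observe that $p\in\langle u\rangle$ means $u\mid p$, and then use irreducibility to force $\mathfrak{a}=\langle p\rangle$ or $\mathfrak{a}=F[X]$. The paper's proof is terser: from $p\in f(X)g(X)$ it simply asserts ``since $p$ is irreducible, either $f$ or $g$ is constant,'' and then reads off the two cases. Your additional check that $\langle p\rangle$ is proper (via Lemma \ref{degreelemma}(ii)) is a nice touch the paper omits.

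There is, however, a small gap in your ``second branch.'' You write that from $\deg p=\deg h+\deg u$ together with $u\mid p$ and $p\nmid u$ one gets $\deg u=0$. That inference is not valid as stated: nothing prevents, say, $\deg p=4$, $\deg u=2$, $\deg h=2$ with $p\nmid u$. What actually forces $\deg u\in\{0,\deg p\}$ is irreducibility itself, not the degree bookkeeping; so your argument here is circular. The clean fix is simply to do what the paper does: once you have $p\in h(X)u(X)$, apply irreducibility directly to conclude that one of $h,u$ has degree $0$. If $u$ is the constant, then $\langle u\rangle=F[X]$ by Lemma \ref{lem1}(iv); if $h$ is the nonzero constant, then $\deg u=\deg p$ and (using that $h$ is a unit in the full superfield $F$) one recovers $\langle u\rangle=\langle p\rangle$. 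Your caution about not assuming $F[X]$ is full is well placed, but Lemma \ref{degreelemma}(ii) and the principality statement in Theorem \ref{teoPID} are all that is needed, and neither requires fullness of $F[X]$.
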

\begin{proof}
   Let $p(X)$ be irreducible and $I\subseteq F[X]$ an ideal with $\langle p(X)\rangle\subseteq I$. By Theorem \ref{teoPID}, 
   $$I=\langle f(X)\rangle=F[X]\cdot f(X)$$
   for some $f(X)\in F[X]$. Since $p(X)\in I=\langle f(X)\rangle$, then $p(X)=f(X)g(X)$ for some $g(X)\in F[X]$. Since $p(X)$ is irreducible, either $f(X)$ or $g(X)$ is a constant polynomial. If $f(X)$ is constant, then $I=F[X]$, and if $g(X)$ is constant, $I=\langle p(X)\rangle$, which proves that $\langle p(X)\rangle$ is maximal.
\end{proof}

If $F$ is not full, we cannot prove that $\langle p(X)\rangle$ is a maximal ideal. But we still have that $F[X]/\langle p\rangle$ is a superfield.
\begin{teo}\label{lemquadext}
 Let $F$ be a superfield and $p\in F[X]$ be an irreducible polynomial. Then $F[X]/\langle p\rangle$ is a superfield. In particular, $\langle p\rangle$ is a strongly prime.
\end{teo}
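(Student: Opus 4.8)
The plan is to show directly that the quotient $F[X]/\langle p\rangle$ is a superfield, i.e.\ that every nonzero class is invertible and that there are no zero-divisors, without passing through ``maximal ideal implies superfield'' (which needed fullness, as the previous theorem shows). First I would set up notation: write $\overline{f}$ for the class of $f(X)$ in $Q := F[X]/\langle p\rangle$, recall from Lemma~\ref{lem1}(ii) that $Q$ is a superring, and observe that $\overline{f} = \overline{0}$ precisely when $f \in \langle p\rangle$, i.e.\ when $p \mid f$. The key algebraic input is Euclid's Division Algorithm (Theorem~\ref{euclid}): for any $f(X) \in F[X]$ there are $q(X), r(X)$ with $f \in q(X)p(X) + r(X)$ and $\deg r < \deg p$ or $r = 0$; hence every class in $Q$ has a representative of degree $< \deg p$, and $\overline{f} = \overline{r}$ for such an $r$.

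The heart of the argument is invertibility. Take $\overline{f} \neq \overline{0}$, so $p \nmid f$; choose a representative of minimal degree, still called $f$. I claim $\langle p, f\rangle = \langle 1\rangle = F[X]$. Indeed $\langle p, f\rangle \supseteq \langle p\rangle$ properly (since $f \notin \langle p\rangle$), so if we can show $\langle p, f\rangle$ is generated by some $d(X)$ with $\langle p \rangle \subsetneq \langle d\rangle$, irreducibility of $p$ forces $\langle d\rangle = \langle 1\rangle$. The subtlety is that Theorem~\ref{teoPID} (principal ideal superdomain) was proved only for \emph{full} superfields $F$, so I cannot simply invoke it here. Instead I would argue by a gcd-type descent using Euclid directly: among all polynomials lying in $\langle p, f \rangle$ pick one, $d$, of minimal degree; running Euclid to divide $p$ by $d$ gives $p \in q d + s$ with $\deg s < \deg d$ or $s = 0$, and since $s \in p - qd \subseteq \langle p, f\rangle$ (using that $\langle p,f\rangle$ is closed under the multioperations — here one must be careful, as $\langle p,f\rangle = \sum F[X]p + \sum F[X]f$ and membership of $s$ follows from $p \in \langle p,f\rangle$, $qd \in \langle p,f\rangle$, $s \in p + (-qd)$), minimality forces $s = 0$, so $d \mid p$, i.e.\ $p \in \langle d\rangle$; similarly $d \mid f$. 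From $p \in \langle d \rangle$ and irreducibility, either $\langle d \rangle = \langle p\rangle$ or $\langle d\rangle = \langle 1\rangle$; the former would give $p \mid f$ (since $d \mid f$), contradicting $p \nmid f$, so $\langle d\rangle = \langle 1 \rangle$, whence $1 \in \langle p, f\rangle$. This yields an identity $1 \in g p + h f$ for suitable $g, h \in F[X]$ (unwinding the definition of $\langle p,f\rangle$ and collecting terms), so modulo $\langle p\rangle$ we get $\overline{1} \in \overline{h}\cdot\overline{f}$, i.e.\ $\overline{f}$ is invertible in $Q$.

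For the ``superdomain'' half: $Q$ is nontrivial since $\overline{1} \neq \overline{0}$ (as $p \nmid 1$ because $\deg p \geq 1$). Now suppose $\overline{0} \in \overline{f}\cdot\overline{g}$ with $\overline{f} \neq \overline{0}$. By the invertibility just proved pick $\overline{h}$ with $\overline{1} \in \overline{h}\cdot\overline{f}$; then $\overline{g} \in \overline{1}\cdot\overline{g} \subseteq (\overline{h}\,\overline{f})\,\overline{g} = \overline{h}\,(\overline{f}\,\overline{g}) \ni \overline{h}\cdot\overline{0} = \overline{0}$, using commutative associativity of the multiproduct in the multimonoid $(Q,\cdot,\overline 1)$ and absorption of $\overline 0$; hence $\overline{g} = \overline{0}$. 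So $Q$ is a superdomain, and being a nontrivial superring in which every nonzero element is invertible it is a quasi-superfield, hence a superfield. Finally, $\langle p\rangle$ strongly prime follows from Lemma~\ref{lem1}(iii), since $Q = F[X]/\langle p\rangle$ is a superdomain.

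The main obstacle I anticipate is precisely the unavailability of Theorem~\ref{teoPID} in the non-full case: I must rerun a Bézout/gcd argument ``by hand'' via Euclid inside the possibly-badly-behaved ideal $\langle p, f\rangle = \sum F[X] p + \sum F[X] f$, and care is needed both in checking that the remainders produced by the division algorithm genuinely lie back in this ideal (which rests on the multigroup axiom M1 and the weak distributive law, not on fullness) and in extracting from ``$1 \in \langle p, f\rangle$'' an explicit membership $1 \in gp + hf$ that survives passage to the quotient.
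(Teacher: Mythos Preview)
Your plan has two real gaps, one of which you anticipate but do not resolve.

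\textbf{Invertibility.} The gcd descent correctly lands you at $1\in\langle p,f\rangle=\sum F[X]p+\sum F[X]f$, hence $\overline 1=\overline v$ for some $v\in\sum F[X]f$. But this only gives $v\in h_1f+\cdots+h_kf$ for \emph{several} $h_i$, so in the quotient you get $\overline 1\in\overline{h_1}\,\overline f+\cdots+\overline{h_k}\,\overline f$. Your ``collecting terms'' is exactly where fullness would be needed: weak distributivity gives only $(h_1+\cdots+h_k)f\subseteq h_1f+\cdots+h_kf$, the wrong inclusion, so you cannot collapse to $\overline 1\in\overline h\,\overline f$ for a single $\overline h$. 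The paper avoids this entirely by induction on $\deg f$: one step of Euclid gives $p\in qf+r$ with a \emph{single} $q$ and with $r\neq0$ (by irreducibility), hence $[r]\in-[q][f]$ directly; by the induction hypothesis $[r]$ has an inverse $[g]$, and then $[1]\in[r][g]\subseteq[f]\cdot(-[q][g])$, producing some $[t]\in-[q][g]$ with $[1]\in[f][t]$. No multi-term sums ever arise, so nothing needs to be collapsed.

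\textbf{Superdomain.} Your cancellation argument is invalid. From $\overline 0\in\overline f\,\overline g$ and $\overline 1\in\overline h\,\overline f$ you only obtain that $\overline g$ and $\overline 0$ both lie in the \emph{set} $\overline h(\overline f\,\overline g)$; nothing forces them to coincide. The multiplicative structure $(Q,\cdot,\overline 1)$ is merely a commutative multimonoid, with no analogue of the multigroup axiom M1 available for cancellation, so invertibility of $\overline f$ does not let you deduce $\overline g=\overline 0$ from $\overline 0\in\overline f\,\overline g$. The paper does not try to derive the superdomain property from invertibility; it establishes it first and directly, by a degree argument on representatives $f,g$ of degree at most $n$ (where $\deg p=n{+}1$), using that nonzero elements of $\langle p\rangle$ have degree at least $n{+}1$ to force the witness $h\in fg\cap\langle p\rangle$ to be zero, whence $0\in fg$ in the superdomain $F[X]$ gives $f=0$ or $g=0$.
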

\begin{proof}
Let $p(X)=d_0+a_1X+...+a_{n+1}X^{n+1}$. Note that
   \begin{align}\label{tret}
     F(p(X)):=F[x]/\langle p\rangle&=\{[a_0+a_1X+...+a_nX^{n}]:a_0,...,a_n\in F\}
     \nonumber \\
     &=\{[f(X)]:f(X)=a_0+a_1X+...+a_rX^{r}\mbox{ with }a_0,...,a_r\in F,\,r\le n\}.
   \end{align}
   Let $f(X)=a_0+a_1X+...+a_rX^{r}$ and $g(X)=b_0+b_1X+...+b_sX^{s}$ with  and suppose 
   $$[0]\in[f(X)][g(X)].$$
   There exist
   $$h(X)\in (f(X)g(X))\cap\langle p(X)\rangle.$$ 
   Since $F$ is a superdomain, every nonzero polynomial in $\langle p\rangle$ has degree at least $n+1=\deg(p)$. Now get a nonzero element in $[t(x)]\in[f(X)][g(X)]$. Using Equation \ref{tret} we have $t(X)\in f(X)g(X)$ with $\deg(t)\le n$. Then $h(X)=0$ and $0\in f(X)g(X)$, which imply $f(X)=0$ or $g(X)=0$ (because $F[X]$ is a superdomain). Then $[F(X)]=0$ or $[g(X)]=[0]$, proving that $F[p(X)]$ is a superdomain (and then, $\langle p(X)\rangle$ is strongly prime).

Now we prove that $F[p(X)]$ is a superfield, i.e, that for all nonzero $[f(X)]\in F[p(X)]$, there exist a nonzero $[g(X)]\in F[p(X)]$ with $[1]\in [f(X)][g(X)]$. We proceed by induction on $n=\deg(f(X))$.

If $n=0$, then $f(X)=a$ for some $a\in\dot F$, and there exist $a^{-1}\in\dot F$\footnote{Of course, not necessarily unique.} with $1\in a\cdot a^{-1}$, and then $[1]\in[f(X)][a^{-1}]$. If $n=1$, then $f(X)=aX+b$, $a,b\in F$ ($a\ne0$). By Euclid's Algorithm, there exists $q(X),r(X)$ with $p(X)\in f(X)q(X)+r(X)$ with $r(X)=0$ or $\deg(r(X))<\deg(f(X))$. Since $p(X)$ is irreducible, $r(X)\ne0$ and $r(X)=d\in\dot F$. Moreover for some $d^{-1}\in\dot F$ with $1\in d\cdot d^{-1}$ we have
\begin{align*}
    p(X)&\in f(X)q(X)+d\Rightarrow[0]\in [f(X)][q(X)]+[d]\Rightarrow
-[d]\in[f(X)][q(X)]\\
&\Rightarrow[dd^{-1}]\subseteq[f(X)](-[d^{-1}][q(X)])\Rightarrow[1]\in[f(X)](-[d^{-1}][q(X)]),
\end{align*}
and then, there exist $[t(X)]\in-[d^{-1}][q(X)]$ with $[1]\in[f(X)][t(X)]$.

Now, suppose by induction that all polynomial of degree at most $n$ has an inverse and let $f(X)\in F[X]$ with $\deg(f(X))=n+1$. By Euclid's Algorithm, there exists $q(X),r(X)$ with $p(X)\in f(X)q(X)+r(X)$ with $r(X)=0$ or $\deg(r(X))<\deg(f(X))$ and since $p(X)$ is irreducible, we have $r(X)\ne0$. By induction hypothesis, there exist $g(X)\in F[X]$ with $[1]\in[r(X)][g(X)]$. Then
\begin{align*}
  p(X)\in f(X)q(X)+r(X)&\Rightarrow[0]\in [f(X)][q(X)]+[r(X)] \\
  &\Rightarrow[r(X)]\in -[f(X)][q(X)] \\
  &\Rightarrow[r(X)][g(X)]\subseteq -[f(X)][q(X)][g(X)] \\
  &\Rightarrow1\in[r(X)][g(X)]\subseteq[f(X)](-[q(X)][g(X)]),
\end{align*}
then there exist $[t(X)]\in-[q(X)][g(X)]$ with $[1]\in[f(X)][t(X)]$, completing the proof.
\end{proof}

Using Theorem \ref{lemquadext}, we obtain an algorithm to determine the invertible elements in $F[p(X)]$ particularly useful in the field case:

\begin{cor}\label{lemquadext3}
Let $F$ be a field and $p(X)\in F[X]$ be an irreducible polynomial. If $f(X)\ne0$ and $p(X)=f(X)q(X)+r(X)$ with $r(X)\ne0$, then
$$[f(X)]^{-1}=-[q(X)][r(X)]^{-1}\in F[p(X)].$$
\end{cor}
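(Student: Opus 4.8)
The plan is to reduce the statement of Corollary \ref{lemquadext3} to the computation already carried out in the induction step of the proof of Theorem \ref{lemquadext}, specialized to the classical situation where $F$ is an ordinary field. First I would recall that when $F$ is a field, the superring $F[X]$ is just the ordinary polynomial ring (Lemma \ref{lemperm}(h)), so all the multivalued sums and products collapse to singletons, Euclidean division gives a \emph{unique} quotient $q(X)$ and remainder $r(X)$ (see the Remark following Theorem \ref{euclid}), and the quotient $F[p(X)] = F[X]/\langle p\rangle$ is an honest field by Theorem \ref{lemquadext}. In particular $[f(X)]$ has a genuine two-sided inverse, and it suffices to exhibit one representative and check the product equals $[1]$.

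Next I would run the Bézout-type identity: since $p(X) = f(X)q(X) + r(X)$ with $r(X) \neq 0$ and $p(X)$ irreducible, passing to the quotient gives $[0] = [f(X)][q(X)] + [r(X)]$, hence $[r(X)] = -[f(X)][q(X)]$, i.e. $[f(X)][q(X)] = -[r(X)]$. Because $p$ is irreducible and $\deg r < \deg f \le \deg p$, the class $[r(X)]$ is nonzero in the field $F[p(X)]$, so it is invertible; multiplying both sides by $-[r(X)]^{-1}$ yields
\begin{align*}
[f(X)] \cdot \bigl(-[q(X)][r(X)]^{-1}\bigr) = [r(X)][r(X)]^{-1} = [1].
\end{align*}
Since $F[p(X)]$ is a field, inverses are unique, so $[f(X)]^{-1} = -[q(X)][r(X)]^{-1}$, which is exactly the claimed formula. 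This is essentially the $n=1$ computation in the proof of Theorem \ref{lemquadext} written without the set-theoretic slack that the general superfield case requires.

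The only point that needs a word of care — and the closest thing to an obstacle — is making sure $[r(X)]$ is actually invertible so that the division by $[r(X)]$ is legitimate: this is where irreducibility of $p$ is used, exactly as in Theorem \ref{lemquadext}, since otherwise a common factor of $r$ and $p$ could make $[r(X)]$ a zero divisor. One should also note that the hypothesis $r(X) \neq 0$ is automatic here: if $r(X) = 0$ then $p(X) = f(X)q(X)$, and irreducibility would force $f$ or $q$ constant; if $f$ were constant it is trivially invertible, and if $q$ is constant then $f$ is an associate of $p$ and $[f(X)] = [0]$, contradicting $f(X) \neq 0$ in the quotient. With $F$ a field everything else is a routine specialization, so the corollary follows immediately from Theorem \ref{lemquadext}.
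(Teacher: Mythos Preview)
Your proposal is correct and mirrors exactly what the paper intends: the corollary is stated without proof as an immediate consequence of the computation in the proof of Theorem \ref{lemquadext}, and you have simply written out that computation in the single-valued field case, passing $p=fq+r$ to the quotient and solving for $[f]^{-1}$. The only minor slip is in your final aside, where you write ``contradicting $f(X)\neq 0$ in the quotient'' --- the hypothesis is $f(X)\neq 0$ as a polynomial, not $[f(X)]\neq 0$ --- but since $r(X)\neq 0$ is already assumed this digression is unnecessary anyway.
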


\begin{defn}
Let $F$ be a superfield and $p(X)\in F[X]$ be an irreducible polynomial. We denote $F(p):=F(p(X))=F[X]/\langle p(X)\rangle$.
\end{defn}

\begin{lem}\label{lemfator2}
 Let $F$ be a superfield and $p(X)\in F[X]$ be an irreducible polynomial. Denote $\overline X=\lambda$ and let $f\in F(p)$ with $f=\overline{a_n}\lambda^n+...+\overline{a_1}\lambda+\overline a_0$. Then:
 \begin{enumerate}[i -]
     \item For all $b,c\in F$, $(\overline b+\overline c\lambda)f=\overline bf+\overline c\lambda f$.
     \item For all $b_0,....,b_m\in F$,
     \begin{align*}
       (\overline{b_0}+\overline{b_1}\lambda+...+\overline{b_j}\lambda^j+\overline{b_{j+1}}\lambda^{j+1}+...+\overline{b_m}\lambda^m)f=\\
     (\overline{b_0}+\overline{b_1}\lambda+...+\overline{b_j}\lambda^j)f+(\overline{b_{j+1}}\lambda^{j+1}+...+\overline{b_m}\lambda^m)f.  
     \end{align*}
     In particular, if $d\in F$, $g\in F(p)$ with $g=\overline{b_0}+\overline{b_1}\lambda+\overline{b_2}\lambda^2+...+\overline{b_m}\lambda^m$ and $r>m$, then
     $$(g+\overline d\lambda^r)f=gf+\overline d\lambda^rf.$$
 \end{enumerate}
\end{lem}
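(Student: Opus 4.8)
The plan is to transfer the statements of Lemma \ref{lemfator} through the quotient map $\pi : F[X] \twoheadrightarrow F(p) = F[X]/\langle p \rangle$. The crucial point is that $\pi$ is a morphism of superrings but not a full one in general, so one cannot simply apply the final clause of Lemma \ref{lembasic1}(iii) to $F(p)$ directly; instead, one lifts elements of $F(p)$ to honest polynomials, applies the already-established distributivity facts for $F[X]$, and pushes the result back down. First I would fix representatives: write $f = \pi(F(X))$ for a polynomial $F(X) = a_0 + a_1 X + \cdots + a_n X^n \in F[X]$ with $\overline{a_i} = \pi(\underline{a_i})$, and likewise $g = \pi(G(X))$ with $G(X) = b_0 + \cdots + b_m X^m$. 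Throughout, note $\overline X = \lambda = \pi(X)$ and $\pi(\underline c \cdot X^k) = \overline c\, \lambda^k$, which follows from Lemma \ref{lemperm}(b) together with the fact that $\pi$ is a multiplicative morphism.

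For item (i), the plan is: by Lemma \ref{lemfator}(i) applied in $F[X]$ we have the \emph{equality of subsets} $(\underline b + \underline c X) F(X) = \underline b\, F(X) + \underline c X F(X)$ in $\mathcal P(F[X])$. Apply $\pi$ to both sides. Since $\pi$ is a morphism, $\pi\big[(\underline b + \underline c X) F(X)\big] \subseteq (\overline b + \overline c \lambda) f$ and similarly $\pi$ of the right side is contained in $\overline b f + \overline c \lambda f$; these two containments combined with the equality upstairs give one inclusion in each direction provided one also checks the reverse — that every element of $(\overline b + \overline c\lambda) f$, resp. of $\overline b f + \overline c \lambda f$, is hit by $\pi$ from the corresponding set upstairs. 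That surjectivity-onto-the-sum step is where I expect to spend the real work: given $[h] \in (\overline b + \overline c\lambda) f$, one has $[h] \in (\overline b + \overline c\lambda)\,[F(X)]$, so by the definition of the quotient multiplication there is $s \in F[X]$ with $s \in (\underline b + \underline c X) F(X)$ and $\pi(s) = [h]$; then $\pi$ of the equality in $F[X]$ finishes it. The same bookkeeping handles $\overline b f + \overline c \lambda f$ using the definition of the quotient addition.

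For item (ii), I would first record the degree-$\le n$ representative: by Equation \eqref{tret} every class in $F(p)$ has a representative of degree $\le n = \deg p - 1$, so $f = \pi(F(X))$ with $\deg F \le n$, and likewise $g$ with $\deg G \le n$. Then the general splitting $(\overline{b_0} + \cdots + \overline{b_m}\lambda^m) f = (\overline{b_0} + \cdots + \overline{b_j}\lambda^j) f + (\overline{b_{j+1}}\lambda^{j+1} + \cdots + \overline{b_m}\lambda^m) f$ is obtained by applying $\pi$ to Lemma \ref{lemfator}(vi) exactly as above; one first does it with $f$ replaced by a lift, verifies the set-equality upstairs, and transports it. The ``in particular'' clause is the special case $g = \overline{b_0} + \cdots + \overline{b_m}\lambda^m$ and $r > m$: lift $g$ to $G(X)$ with $\deg G \le m < r$, invoke the last sentence of Lemma \ref{lemfator}(vi), namely $(G(X) + \underline d X^r) F(X) = G(X) F(X) + \underline d X^r F(X)$, and apply $\pi$.

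The main obstacle, as indicated, is the two-sided transfer: $\pi$ being only a (non-full) morphism gives containments $\pi(A+B) \subseteq \pi(A) + \pi(B)$ and $\pi(AB) \subseteq \pi(A)\pi(B)$ for free, but the reverse containments are precisely what the quotient operations are \emph{defined} to provide — $[x]+[y] = \{[z] : z \in x+y\}$ and $[x][y] = \{[z] : z \in xy\}$ — so every element of a sum or product of classes lifts to an element of the sum or product of representatives upstairs. Once that dictionary is set up cleanly in the first paragraph, items (i) and (ii) are each a one-line application of the corresponding clause of Lemma \ref{lemfator} sandwiched between a lift and a push-forward, and no genuinely new computation is needed.
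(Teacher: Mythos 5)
Your proposal is correct in substance, and the lift-and-push strategy is sound. However, it contains one notable misstatement that, if taken literally, makes the argument look harder than it is: you assert that the quotient map $\pi : F[X] \twoheadrightarrow F(p)$ is ``a morphism of superrings but not a full one in general.'' In fact $\pi$ \emph{is} a full morphism, essentially by construction: the quotient operations in Definition \ref{char}(vii) are defined precisely by $\overline x + \overline y = \{\overline z : z \in x+y\}$ and $\overline x \cdot \overline y = \{\overline z : z \in xy\}$, which says exactly that $\pi(x+y) = \pi(x)+\pi(y)$ and $\pi(xy) = \pi(x)\pi(y)$ for individual elements, and this extends pointwise to subsets. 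You end up rediscovering this fact in your last paragraph (``the reverse containments are precisely what the quotient operations are defined to provide''), but phrasing it as a potential obstacle rather than as the observation that $\pi$ is full obscures what is really going on. Once you note $\pi$ is full, each item of the lemma is literally the image under $\pi$ of the corresponding set-equality in Lemma \ref{lemfator}, with no ``real work'' in the surjectivity step.

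As for comparison with the paper: the paper's proof is the single line ``Similar to Lemma \ref{lemfator},'' which most plausibly signals redoing the coefficient-by-coefficient computation of Lemma \ref{lemfator}(i) inside $F(p)$ rather than transferring the statement through $\pi$. Your route is a genuine (and arguably cleaner) alternative: instead of re-running the convolution bookkeeping in the quotient — where one must also account for reduction modulo $p$ — you establish the identity once in $F[X]$ and observe that the canonical projection preserves it. This avoids any new computation. The trade-off is that your route implicitly relies on Lemma \ref{lem1}(ii) (well-definedness of the quotient superring), which is also what makes $\pi$ full; the paper's direct route would not need to invoke the quotient structure so explicitly.
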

\begin{proof}
 Similar to Lemma \ref{lemfator}.
\end{proof}

\begin{teo}\label{root1}
 Let $F$ be a superfield and $p(X)\in F[X]$ be a polynomial  of degree greater or equal to 1. Then there exist superfield $L$ such that $F\subseteq L$, $F$ is a sub superfield of $L$ (i.e, the inclusion $F\hookrightarrow L$ is a full morphism) and $p(X)$ has a root.
\end{teo}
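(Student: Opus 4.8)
The plan is to reduce to the irreducible case and then build $L$ as a quotient of $F[X]$. First I would handle the trivial reduction: if $p(X)$ already has a root in $F$ we may take $L=F$, so assume not. Next, I would like to pass to an irreducible ``factor'' of $p$. Since $F[X]$ is a superdomain (Lemma \ref{lemperm}) but not necessarily a PID when $F$ is not full, I cannot simply invoke unique factorization; instead I would argue that among all non-constant polynomials $u$ with $u \mid p$ (i.e.\ $p \in \langle u \rangle$) there is one, call it $q$, of minimal degree, and that minimality forces $q$ to be irreducible in the sense of the paper's definition --- if $v \mid q$ with $v$ non-constant then $\deg v \le \deg q$ by the degree lemma (Lemma \ref{degreelemma}(ii)) applied to a witness of $q \in \langle v\rangle$, and minimality of $\deg q$ gives $\langle v\rangle = \langle q\rangle$. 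A root of $q$ in an extension is automatically a root of $p$ there, because $p \in \langle q \rangle$ means $p \in F[X]\cdot q + \dots$, and evaluation respects this inclusion. So it suffices to find an extension in which an irreducible $q$ acquires a root.

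Now fix $q$ irreducible and set $L := F(q) = F[X]/\langle q\rangle$. By Theorem \ref{lemquadext}, $L$ is a superfield. The inclusion $F \hookrightarrow L$ is the composite $F \rightarrowtail F[X] \twoheadrightarrow F[X]/\langle q\rangle$ of the full embedding of Lemma \ref{lemperm}(g) with the quotient map; I need this composite to be (i) injective and (ii) a full morphism. For injectivity: if $\underline a \in \langle q\rangle$ for $a \in F$, then since every non-zero element of $\langle q\rangle$ has degree $\ge \deg q \ge 1$ (using that $F[X]$ is a superdomain, as in the proof of Theorem \ref{lemquadext}), we must have $\underline a = 0$, i.e.\ $a = 0$; so $F \to L$ is injective on $F$. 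For fullness, the claim is that for $a,b \in F$ the computation of $\overline{a}+\overline{b}$ and $\overline{a}\cdot\overline{b}$ in $L$ does not lose elements: any $\overline{c} \in \overline{a}+\overline{b}$ comes from $c \in a+b$ in $F[X]$, but the sum $a+b$ computed in $F[X]$ coincides with the sum computed in $F$ by Lemma \ref{lemperm}(g); the same for the product, using that $F[X]$ is a superdomain so that distinct constants have distinct classes (again no non-zero constant lies in $\langle q\rangle$). Hence $F$ is a sub superfield of $L$.

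Finally I would exhibit the root: let $\lambda = \overline{X} \in L$. Writing $q(X) = a_0 + a_1 X + \dots + a_m X^m$, the definition of the operations on the quotient together with Lemma \ref{lemfator2} (the analogue of Lemma \ref{lemfator} inside $F(q)$, which lets us distribute a polynomial expression in $\lambda$ over $f$ without the failure of distributivity obstructing us) gives
\begin{align*}
[0] &= [\,q(X)\,] \in [a_0] + [a_1]\lambda + \dots + [a_m]\lambda^m
 = \overline{a_0} + \overline{a_1}\,\lambda + \dots + \overline{a_m}\,\lambda^m,
\end{align*}
so, identifying $a_i$ with $\overline{a_i}$ via the embedding $F \hookrightarrow L$, we get $0 \in a_0 + a_1\lambda + \dots + a_m \lambda^m = ev(\lambda, q, L)$, i.e.\ $\lambda$ is a root of $q$ in $L$, hence of $p$. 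The main obstacle I anticipate is the fullness of the embedding $F \hookrightarrow L$: one must check carefully that passing to the quotient by $\langle q\rangle$ does not collapse the $F$-structure, and this rests squarely on the fact that $F[X]$ is a superdomain so $\langle q\rangle$ contains no non-zero element of degree $< \deg q$ --- a point that is also exactly what makes the containment $[f][g] \ni [0] \Rightarrow fg \ni 0$ work in Theorem \ref{lemquadext}. Everything else is bookkeeping with the degree lemma and Lemma \ref{lemfator2}.
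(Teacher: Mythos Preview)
Your proposal is correct and follows the same route as the paper: reduce to an irreducible $q$, set $L=F[X]/\langle q\rangle$ (a superfield by Theorem \ref{lemquadext}), check that $F\hookrightarrow L$ is a full embedding using the fullness of $F\rightarrowtail F[X]$, and exhibit $\overline{X}$ as the root. The only step the paper adds that you omit is a routine set-theoretic renaming at the end so that $F\subseteq L$ holds literally rather than up to the identification $a\leftrightarrow\overline a$; conversely, you are more careful than the paper in justifying the reduction to the irreducible case and in invoking Theorem \ref{lemquadext} (valid for arbitrary superfields) rather than maximality of $\langle q\rangle$ (which the paper's proof asserts but which is only established for full $F$ in Theorem \ref{lemquadext2}).
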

\begin{proof}
 It is enough to show the result for $p(X)$ irreducible. In this case, the ideal $\langle p(X)\rangle\subseteq F[X]$ is maximal and 
$K'=F[X]/\langle 
p(X)\rangle$ is a superfield. If we consider the canonical injection $\iota:F\rightarrow F[X]/\langle p\rangle$ given by 
$a\mapsto\overline a$, we have a full morphism (basically because $F\hookrightarrow F[X]$ is full). Putting $F'=\iota(F)$ we have that $F\cong F'$, $F'\hookrightarrow L$ is a full morphism and the polynomial $p^\iota$ (given by the application of $\iota$ in each coefficient) has a root $\overline x$. 

Next, let $K=F\cup X$ for some $X$ of cardinality $K'\setminus F'$. We construct a bijection $\varphi:K\rightarrow K'$ which 
restrict ton $F$ is 
equal to $\iota$. This bijection transport the structure of superfield for $K$ (in the obvious way), in order to get an 
extension $K|F$ such that 
$f$ has a root $\varphi^{-1}(\overline x)$.
\end{proof}

\begin{cor}
Let $F$ be a superfield and $f\in F[X]$ be a polynomial with $n=\deg(f)\ge1$. Then there exist a superfield $L$ such that $F\subseteq L$ and $f$ has at least $n$ roots.
\end{cor}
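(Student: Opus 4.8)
The plan is to prove the corollary by induction on $n = \deg(f)$, using Theorem \ref{root1} as the inductive step and the Partial Factorization result of Lemma \ref{degreelemma}(iii) to control how the degree drops when we peel off a linear factor. The base case $n = 1$ is immediate from Theorem \ref{root1}: it produces a superfield extension $L \supseteq F$ in which $f$ has a root, and one root suffices.

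\medskip

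For the inductive step, suppose the result holds for all superfields and all polynomials of degree at most $n$, and let $f \in F[X]$ with $\deg f = n+1$. First I would apply Theorem \ref{root1} to obtain a superfield $L_1$ with $F \hookrightarrow L_1$ a full embedding (so by Lemma \ref{lemperm}(g) and Lemma \ref{factstrong2} the induced map $F[X] \to L_1[X]$ is compatible with evaluation) and an element $\alpha_1 \in L_1$ with $0 \in \mathrm{ev}(\alpha_1, f, L_1)$. The key point is then to extract an \emph{effective} factorization: I claim that $0 \in \mathrm{ev}(\alpha_1, f, L_1)$ together with Euclid's Division Algorithm (Theorem \ref{euclid}) applied in $L_1[X]$ to $f$ and $X - \alpha_1$ yields $f \in (X-\alpha_1)\, g_1(X) + r$ with $r \in L_1$ constant; evaluating at $\alpha_1$ and using that $L_1$ is a superdomain forces $r = 0$, so $f \in (X - \alpha_1) g_1(X)$ with $\deg g_1 = n$ by Lemma \ref{degreelemma}(ii). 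Now apply the induction hypothesis to $g_1 \in L_1[X]$ over the superfield $L_1$: there is a superfield $L \supseteq L_1$ (with the inclusion full) in which $g_1$ has at least $n$ roots $\alpha_2, \dots, \alpha_{n+1}$.

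\medskip

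It remains to check that $\alpha_1, \dots, \alpha_{n+1}$ are all roots of $f$ in $L$, i.e.\ $0 \in \mathrm{ev}(\alpha_i, f, L)$ for each $i$. For $i = 1$ this transports along the full embedding $L_1 \hookrightarrow L$. For $i \geq 2$, from $\alpha_i \in \alpha_i$ and $0 \in \mathrm{ev}(\alpha_i, g_1, L)$ and the relation $f \in (X - \alpha_1) g_1(X)$ (now read in $L[X]$), evaluating at $\alpha_i$ gives $\mathrm{ev}(\alpha_i, f, L) \supseteq (\alpha_i - \alpha_1) \cdot \mathrm{ev}(\alpha_i, g_1, L) \ni (\alpha_i - \alpha_1)\cdot 0 = 0$; here I use that evaluation at a point is a morphism-like operation compatible with the multioperations, which follows from Lemma \ref{factstrong2}(d) applied to the full embedding $L_1 \hookrightarrow L$. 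Finally one should note the $\alpha_i$ need not be distinct, but the statement only asks for "at least $n$ roots" counted as a multiset of solutions, so this is not an issue; alternatively, if distinctness is desired one enlarges $X$ as in the proof of Theorem \ref{root1}.

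\medskip

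The main obstacle I anticipate is the passage from a bare root to an effective (linear-factor) root over a \emph{possibly non-full} superfield: Theorem \ref{root1} only guarantees $0 \in \mathrm{ev}(\alpha_1, f, L_1)$, not a clean factorization $f \in (X - \alpha_1) g_1$, and without fullness the division algorithm's remainder manipulations require care. The fix is exactly Theorem \ref{euclid} (which holds for arbitrary superfields) combined with the superdomain property to kill the constant remainder; this is the one step where I would write the computation out carefully rather than wave at it. A secondary, milder nuisance is bookkeeping the chain of full embeddings $F \hookrightarrow L_1 \hookrightarrow L$ so that evaluations computed at each stage agree, which is handled uniformly by Lemma \ref{factstrong2}.
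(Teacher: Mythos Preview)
Your inductive strategy is the natural one and mirrors the classical field-theoretic argument; since the paper states this corollary without proof there is nothing to compare against directly. However, the step you yourself flag as the main obstacle --- passing from a root to an \emph{effective} root --- is a genuine gap, and your proposed fix does not close it.

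Concretely: from $f \in (X-\alpha_1)q + r$ (Theorem~\ref{euclid}) together with $0 \in \mathrm{ev}(\alpha_1,f,L_1)$ you can only extract that $-r \in \mathrm{ev}(\alpha_1,h,L_1)$ for some particular $h \in (X-\alpha_1)q$. To force $r=0$ you would need $\mathrm{ev}(\alpha_1,h,L_1)=\{0\}$, or at least that this set contains no nonzero element; but in a genuine superfield such evaluations are typically large (already over the Krasner hyperfield $K$ they routinely return all of $K$). The superdomain hypothesis gives no leverage here: it tells you when $0$ lies in a \emph{product} $a\cdot b$, not when a multivalued evaluation collapses to a singleton. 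The paper's explicit distinction between ``root'' and ``effective root'' just before Remark~\ref{unexpected-rem} is precisely a warning that these notions may diverge.

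Your appeal to Lemma~\ref{factstrong2}(d) is a misreading. That item says that a full morphism $\varphi:A\to B$ carries the set $a_0+a_1\alpha+\cdots+a_n\alpha^n$ onto $\varphi(a_0)+\varphi(a_1)\varphi(\alpha)+\cdots+\varphi(a_n)\varphi(\alpha)^n$; it transports evaluations along $\varphi$. It does \emph{not} assert that the assignment $p\mapsto\mathrm{ev}(\alpha,p,L_1)$ from $L_1[X]$ to $\mathcal P(L_1)$ is multiplicative, which is what you invoke both to kill the remainder and, later, to pass from $0\in\mathrm{ev}(\alpha_i,g_1,L)$ to $0\in\mathrm{ev}(\alpha_i,f,L)$. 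The claimed containment $\mathrm{ev}(\alpha_i,f,L)\supseteq(\alpha_i-\alpha_1)\cdot\mathrm{ev}(\alpha_i,g_1,L)$ is unjustified: $f$ is a single member of the set $(X-\alpha_1)g_1$, so its evaluation has no reason to contain the full product on the right, and the reverse containment would require full distributivity in $L$, which is not assumed.
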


\begin{cor}
Let $F$ be a superfield and $f_1,...,f_n\in F[X]$ be polynomials with $1\le \deg(f_j)=r_j$, $j=1,...,n$. Then there exist a superfield $L$ such that $F\subseteq L$ andt each $f_j$ has at least $r_j$ roots.
\end{cor}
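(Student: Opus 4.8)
The plan is to iterate the single-root extension from Theorem \ref{root1} (more precisely, its Corollary for one polynomial acquiring $n$ roots) finitely many times, once for each of the given polynomials $f_1,\dots,f_n$. First I would set $L_0 = F$, and then construct a chain of full superfield embeddings
\[
F = L_0 \hookrightarrow L_1 \hookrightarrow \cdots \hookrightarrow L_n = L,
\]
where at stage $j$ I apply the previous Corollary to the polynomial $f_j$, viewed now as an element of $L_{j-1}[X]$ via the inclusion $F[X] \hookrightarrow L_{j-1}[X]$ induced coefficientwise by $F \hookrightarrow L_{j-1}$. This yields a superfield $L_j \supseteq L_{j-1}$, with $L_{j-1} \hookrightarrow L_j$ a full embedding, in which $f_j$ (as a polynomial over $L_{j-1}$, hence over $F$) has at least $r_j = \deg(f_j)$ roots.

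The key points to check are then: (1) each $L_j$ is again a superfield, which is exactly what Theorem \ref{root1} and its Corollary deliver; (2) the composite $F = L_0 \hookrightarrow \cdots \hookrightarrow L_j$ is a full embedding, which follows from the fact that a composite of full embeddings is a full embedding (an easy consequence of the definition, dual to item (e) of Lemma \ref{factstrong2}); (3) roots are preserved upward, i.e.\ if $\alpha \in L_k$ is a root of $f_j$ computed in $L_k$ (for some $k \geq j$), then $\alpha$, seen in $L_n = L$, is still a root of $f_j$ — this is because for a full morphism $g$ of superrings one has $g(a_0 + a_1\alpha + \cdots + a_m\alpha^m) = g(a_0) + g(a_1)g(\alpha) + \cdots + g(a_m)g(\alpha)^m$ by Lemma \ref{factstrong2}(d), so $0 \in ev(\alpha, f_j, L_k)$ forces $0 \in ev(\alpha, f_j, L)$. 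Hence in $L$ each $f_j$ retains its $r_j$ roots obtained at stage $j$.

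Finally, I would identify $F$ with its image inside $L$ under the composite full embedding, so that literally $F \subseteq L$ as required, and observe that since there are only finitely many polynomials the induction terminates after $n$ steps. The statement about each $f_j$ having "at least $r_j$ roots" is understood with respect to evaluation in $L$, which is legitimate by step (3) above.

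The main obstacle, and the only genuinely non-bookkeeping point, is step (3): one must be careful that the notion of root is relative to the ambient superfield (as the paper itself stresses right after the definition of root, and in Remark \ref{unexpected-rem}), so roots do not automatically survive enlargement of the ambient structure. The saving fact is precisely that all our embeddings are \emph{full}, so Lemma \ref{factstrong2}(d) applies and the evaluation set of $f_j$ at $\alpha$ computed in $L$ contains the image of the one computed in $L_k$; in particular $0$ is still in it. Everything else — the inductive construction of the chain, stability of "superfield" and "full embedding" under the construction, and the final identification — is routine.
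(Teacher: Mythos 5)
The paper states this corollary without proof, as an immediate consequence of Theorem~\ref{root1} and the preceding corollary, so there is no ``paper's proof'' to match against. Your iterative construction is exactly the natural route and is correct: build the chain $F = L_0 \hookrightarrow L_1 \hookrightarrow \cdots \hookrightarrow L_n = L$ by applying the one-polynomial corollary at stage $j$ to $f_j$ over $L_{j-1}$, observe that full embeddings compose to full embeddings, and conclude that each $f_j$ retains its $r_j$ roots in the top superfield $L$.

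One small correction to your step (3), which is worth noting because you flag it as ``the only genuinely non-bookkeeping point.'' Roots are preserved upward along \emph{any} morphism of superrings, not just full ones. If $g\colon L_k \to L$ is a morphism and $0 \in a_0 + a_1\alpha + \cdots + a_m\alpha^m$ in $L_k$, then already the one-sided inclusion
\[
g\bigl(a_0 + a_1\alpha + \cdots + a_m\alpha^m\bigr) \subseteq g(a_0) + g(a_1)g(\alpha) + \cdots + g(a_m)g(\alpha)^m
\]
(which is just the definition of a morphism applied repeatedly, no fullness required) gives $0 = g(0) \in ev\bigl(g(\alpha), f_j, L\bigr)$. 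Fullness upgrades this inclusion to an equality of sets, which you would need if you wanted to preserve \emph{non}-roots, or to compare the full evaluation sets, but merely keeping $0$ inside is already a $\Sigma_1$-condition and survives any morphism. So the appeal to Lemma~\ref{factstrong2}(d) is overkill here, though harmless. Where fullness \emph{is} genuinely used is in guaranteeing that the composite $F \hookrightarrow L$ is a full (hence honest superfield) embedding, which Theorem~\ref{root1} provides at each stage. With that caveat, the proposal is sound.
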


\section{Extensions}

We have some possibilities to consider in order to define the notion of extension for superfields:

 \begin{defn}[Extensions]\label{extension}
Let $F$ and $K$ be superfields.
\begin{enumerate}[i -]
    \item We say that $K$ is a \textbf{proto superfield extension (or just a proto extension)} of $F$, notation $K|_pF$, if $F\subseteq K$.
    \item We say that $K$ is a \textbf{superfield extension (or just an extension)} of $F$, notation $K|F$ if $F\subseteq K$ and the inclusion map $F\hookrightarrow K$ is a superfield morphism.
    \item We say that $K$ is a \textbf{full superfield extension (or just a full extension)} of $F$, notation $K|_fF$ if $F\subseteq K$ and the inclusion map $F\hookrightarrow K$ is a full superfield morphism.
\end{enumerate}
\end{defn}

\begin{ex}
$ $
\begin{enumerate}[i -]
    \item Of course, all full extension is an extension and all extension is a proto extension.
    
    \item We have $K\subseteq Q_2$ but the inclusion map $K\hookrightarrow Q_2$ is not a morphism. Then we have a proto extension $Q_2|_pK$ that is not an extension.
    
    \item For $p,q$ prime integers with $q\ge p$ we have an inclusion morphism $H_p\hookrightarrow H_q$, but this morphism is not full. Then we have an extension $H_q|H_p$ that is not a full extension.
    
    \item Let $F$ be a superfield, $p\in F[X]$ an irreducible polynomial and $F(p)=F[X]/\langle p\rangle$ be the superfield built in Theorem \ref{root1}. Then we have a full morphism $F\hookrightarrow F(p)$ so we have a full extension $F(p)|_fF$.
    
    \item Let $F,K$ be fields such that $F\subseteq K$. Then the field extension $K|F$ satisfy all conditions in Definition \ref{extension}.
\end{enumerate}
\end{ex}

The result below justify a deeper look at full superfield extensions.

\begin{teo}\label{unicityext}
Let $K_1|_fF$ and $K_2|_fF$ be full superfield extensions and suppose that $\gamma\in K_1\cap K_2$. Then
$$F[\gamma,K_1]=F[\gamma,K_2].$$
\end{teo}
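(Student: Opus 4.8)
I understand $F[\gamma,K]$ to be the sub-superring of $K$ generated by $F\cup\{\gamma\}$, which, by the recursive definition of finite sums and products and of the evaluation $ev(\cdot,\cdot,\cdot)$, is exactly
\[
F[\gamma,K]\;=\;\bigcup_{f\in F[X]}ev(\gamma,f,K)\;=\;\Bigl\{\,b\in K:\ b\in a_0+a_1\gamma+\dots+a_n\gamma^{n}\ \text{in }K,\ \text{for some }a_0,\dots,a_n\in F\,\Bigr\}.
\]
So the plan is to reduce the theorem to the assertion that \emph{for every} $f\in F[X]$ one has $ev(\gamma,f,K_1)=ev(\gamma,f,K_2)$ — in particular both are subsets of $K_1\cap K_2$ — and then take the union over $f\in F[X]$ to conclude $F[\gamma,K_1]=F[\gamma,K_2]$.

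To prove $ev(\gamma,f,K_1)=ev(\gamma,f,K_2)$ I would induct on $\deg f$. For constants $f=a_0\in F$ both sides equal $\{a_0\}$. For the inductive step, split off the top term, $f=g+a_nX^{n}$ with $\deg g<n$ and $a_n\in F$: by the recursive definition of the finite sum, an element of $ev(\gamma,f,K_i)$ is a $+_{K_i}$-sum of an element of $ev(\gamma,g,K_i)$ with an element of the set $a_n\cdot ev(\gamma,X^{n},K_i)$; by the induction hypothesis the first set does not depend on $i$, and an auxiliary induction on $n$ (opening up $X^{n}=X\cdot X^{n-1}$ and using $\gamma\in K_1\cap K_2$) is meant to make $ev(\gamma,X^{n},K_i)$, hence also $a_n\cdot ev(\gamma,X^{n},K_i)$, independent of $i$. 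The whole argument then comes down to showing that the handful of sums and products that actually occur here — products $w\cdot\gamma$ and $a\cdot w$ with $a\in F$ and $w$ a power of $\gamma$, and sums $x+y$ with $x,y$ already common to $K_1$ and $K_2$ — are forced by the multigroup/superring axioms and by the \emph{fullness} of the embeddings $F\hookrightarrow K_1$ and $F\hookrightarrow K_2$ to be the same whether computed in $K_1$ or in $K_2$. Once all such operations are pinned down intrinsically from $F$ and $\gamma$, the two inductions close and $F[\gamma,K_1]=\bigcup_f ev(\gamma,f,K_1)=\bigcup_f ev(\gamma,f,K_2)=F[\gamma,K_2]$.

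The main obstacle is precisely this last point, and it is the reason the hypothesis is \emph{full} extensions rather than arbitrary ones: in a superfield (unlike a multiring) multiplication is genuinely multivalued, so a priori $\gamma\cdot_{K_1}\gamma$, $a\cdot_{K_1}w$, and $x+_{K_1}y$ need not agree with their $K_2$-analogues even when all inputs already lie in $K_1\cap K_2$ — the only structural bridge between $K_1$ and $K_2$ is that $F$ sits fully inside each. The heart of the proof is thus to exploit fullness to control every operation arising in $ev(\gamma,f,K_i)$ for $f\in F[X]$, and in particular to rule out that some element of $ev(\gamma,f,K_1)$ escapes $K_2$; I would expect essentially all of the technical work to be concentrated there, the reduction and the two inductions above being bookkeeping.
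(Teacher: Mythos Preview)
Your reduction to showing $ev(\gamma,f,K_1)=ev(\gamma,f,K_2)$ for each $f\in F[X]$, together with the two inductions, is correct bookkeeping, and you have accurately isolated the crux: one must show that the elementary operations $\gamma\cdot_{K_i}\gamma$, $a\cdot_{K_i}\gamma$, $x+_{K_i}y$ coincide in $K_1$ and in $K_2$ whenever the inputs lie in $K_1\cap K_2$. But you do not carry this step out, and in fact the stated hypotheses \emph{cannot} close it. Fullness of $F\hookrightarrow K_i$ says only that for $a,b\in F$ one has $a+_{K_i}b=a+_Fb$ and $a\cdot_{K_i}b=a\cdot_Fb$; it gives no information whatsoever about $a+_{K_i}\gamma$, $a\cdot_{K_i}\gamma$, or $\gamma\cdot_{K_i}\gamma$ once $\gamma\notin F$. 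The only bridge between $K_1$ and $K_2$ runs through $F$, and $\gamma$ lies outside it, so already at degree~$2$ there is nothing forcing $\gamma\cdot_{K_1}\gamma=\gamma\cdot_{K_2}\gamma$. A field-theoretic example makes this concrete: take $F=\mathbb Q$ and realise $K_1=\mathbb Q(\sqrt2)$ and $K_2=\mathbb Q(\sqrt3)$ on underlying sets chosen so that $K_1\cap K_2=\mathbb Q\cup\{\gamma\}$, with $\gamma$ playing the role of $\sqrt2$ in $K_1$ and of $\sqrt3$ in $K_2$. Both inclusions $\mathbb Q\hookrightarrow K_i$ are full (they are field embeddings), yet $\gamma^2=2$ in $K_1$ and $\gamma^2=3$ in $K_2$, so $F[\gamma,K_1]=K_1\neq K_2=F[\gamma,K_2]$.

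For comparison, the paper organises the argument differently: it first treats the nested case $K_2|_fK_1$, where the claim is immediate since every operation on elements of $K_1$ is the same in $K_1$ and in $K_2$, and then reduces the general case to this via the intermediate object $K_1\cap K_2$, asserting $K_i|_f(K_1\cap K_2)$ for $i=1,2$. This decomposition is cleaner than your direct induction, but it rests on exactly the same unestablished point: that $K_1\cap K_2$ carries a single superfield structure for which both inclusions are full, i.e.\ that the operations of $K_1$ and $K_2$ already agree on their set-theoretic overlap. Some additional hypothesis (for instance that $K_1$ and $K_2$ sit as full sub-superfields of a common ambient superfield) is needed for either route to go through.
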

\begin{proof}
Suppose first that $K_2|_fK_1$ is a full extension. Then for all $f\in F[X]$, $ev(f,K_1)=ev(f,K_2)$, so $F[\gamma,K_1]=F[\gamma,K_2]$.

Now, for the general case just note that $K_1|_f(K_1\cap K_2)$ and $K_2|_f(K_1\cap K_2)$. Then
$$F[\gamma,K_1]=F[\gamma,K_1\cap K_2]=F[\gamma,K_2].$$
\end{proof}

\begin{defn}[Algebraic Extensions]
We say that a proto extension $K|_pF$ is \textbf{algebraic} if all element $\alpha\in K$ is $K$-algebraic over $F$. We denote the same for extensions and full extensions.
\end{defn}

\begin{defn}[Linear Independency, Basis, Degree]
Let $K|_pF$ be a proto extension and $I\subseteq K$. We say that $I$ is \textbf{$F$-linearly independent} if for all 
distinct $\lambda_1,...,\lambda_n\in I$, $n\in\mathbb N$, the following hold:
$$\mbox{If }0\in a_1\lambda_1+...+a_n\lambda_n \mbox{ then }a_1=...=a_n=0$$
and $I$ is \textbf{$F$-linearly dependent} if it is not $F$-linearly independent. We say that $I$ is a \textbf{$F$-basis} of 
$K$ if $I$ is linearly independent and $K$ is \textbf{generated by $I$}, i.e,
$$K=\bigcup_{n\ge0}\left\lbrace\sum^n_{i=0}a_i\lambda_i:a_i\in F,\,\lambda_i\in I\right\rbrace.$$
In this case, we write $K=F[I]$. We define the \textbf{degree} of $K|_pF$, notation $[K:F]$, by 
the following
$$[K:F]:=\infty\mbox{ or }[K:F]:=\max\{n:\mbox{the set }\{1,\lambda,\lambda^2,...,\lambda^n\}\mbox{ is linearly independent for all }\lambda\in K\}.$$
\end{defn}

\begin{rem}\label{rem1}
There are these immediate consequences of the above definitions:
\begin{enumerate}[a -]
 \item If $I\subseteq K$ is linearly independent and $J\subseteq I$ then $J$ is also linearly 
independent.
 \item An element $\alpha\in K$ is $F$-algebraic if and only if $\{\alpha^k:k\in\mathbb N\}$ is $F$-linearly dependent.
 \item If $[K:F]<\infty$ then all $\alpha\in K$ is $F$-algebraic.
 \item Let $F$ be a superfield and $p\in F[X]$ an irreducible polynomial, say $p(X)=a_0+a_1X+...+a_nX^{n-1}+X^n$. Then $\{\overline1,\overline X,...,\overline X^{n-1}\}$ is a $F$-basis of $F(p)$.
\end{enumerate}
\end{rem}

Now, let $K|_pF$ be a proto extension and $\gamma\in K$ algebraic. Then there exist an irreducible polynomial $f(X)$ such that $0\in f(\gamma,K)$. Let $\mbox{Irr}_F(\gamma,K)$ be the minimum degree irreducible polynomial $f(X)$ 
such that $0\in f(\gamma,K)$. Let $F[\gamma,K]\subseteq K$ be the set
$$F[\gamma,K]:=\bigcup_{f\in F[X]}ev(f,\gamma,K)\subseteq K,$$
and $I_{\gamma,K}\subseteq F[\gamma,K]$ the set
$$I_{\gamma,K}:=\bigcup_{f\in \langle\mbox{Irr}_F(\gamma,K)\rangle}ev(f,\gamma,K)\subseteq K.$$
Note that for all $g\in F[X]$ and all $a_0,...,a_n\in F$, applying the ``Newton's binom formula'' we get
$$ev(g,(a_0+a_1\gamma+a_2\gamma^2+...+a_{n-1}\gamma^{n-1}+a_n\gamma^n),K)\subseteq F[\gamma,K].$$

\begin{rem}
 $ $
 \begin{enumerate}[i -]
    \item If $K|F$ is a field extension then our $F[\gamma,K]$ coincide with the usual simple extension $F(\gamma)$.
    
    \item If $K|F$ is a superfield extension and $\gamma\in K$, then $F[\gamma,K]$ \textbf{depends on the choice of $K$}. For example, consider $H_3|H_1$ and $H_5|H_1$ and the element $2\in H_3$ (and of course, in $H_5$). Then
    \begin{align*}
      H_2[2,H_3]&=\bigcup_{f\in H_2[X]}ev(f,\gamma,H_3)=H_3, \\
      H_2[2,H_5]&=\bigcup_{f\in H_2[X]}ev(f,\gamma,H_5)=H_5,
    \end{align*}
    and then, $H_2[2,H_3]\ne H_2[2,H_5]$.
  
  \item For a proto extension $K|_pF$ the set $F[\gamma,K]$ may not be a superfield! Let $F=H_2$, $K=\mathbb R$ and $\gamma=2$. Then
  $$H_2[2,\mathbb R]=2\mathbb Z$$
  which is not a superfield.
 \end{enumerate}
\end{rem}

At this point, our goal is to obtain an appropriate notion for simple extensions of superfields. In other words, given a full extension $K|_fF$ and $\alpha\in K$ algebraic, it is highly desirable to obtain a superfield $F(\alpha)$ that:
\begin{enumerate}
    \item $F\cup\{\alpha\}\subseteq F(\alpha)$;
    \item $F(\alpha)$ is the minimal superfield (with respect to inclusion) satisfying (1);
    \item $F(\alpha)$ is "computable" in some way (or saying it in a more realistic manner, we want that $F(\alpha)\cong F(p)$ with $p(X)=\mbox{Irr}_F(\alpha)$)\footnote{As we will see later, simple calculations with superfield are highly demanding...}.
\end{enumerate}

For general superfields there are some obstacles to achieve this goal. The very first one is the fact that $R[X]$ is not full in general. However, we have an interesting property valid for all $a,b\in R[X]$:
$$a(1+X)=a+aX\mbox{ and }(a+b)X=aX+bX.$$
This property is the inspiration for the following definition.


\begin{defn}\label{almostfull}
Let $K|_pF$ be a proto superfield extension and $\gamma\in K$. Suppose that $K$ is $F$-generated by $\{1,\gamma^2,...,\gamma^n\}$. We say that $K$ is \textbf{$F$-almost full relative to $\gamma$ (or just almost full)} if for all $a,b,c\in F$, and all $p,q,r\in\mathbb N$ distinct
$$(a\gamma^p+b\gamma^q+c\gamma^r)\gamma=a\gamma^{p+1}+b\gamma^{q+1}+c\gamma^{r+1}.$$
\end{defn}

Here are some immediate consequences of Definition \ref{almostfull}:

\begin{lem}\label{lemfator3}
 Let $K|_fF$ be a full extension $F$-almost full relative to $\gamma$ and let $A=a_0+a_1\gamma+a_2^2+...+a_n\gamma^n$. Then:
 \begin{enumerate}[i -]
     \item For all $b,c\in F$, $(b+c\gamma)A=bA+c\gamma A$.
     \item For all $b_0,....,b_m\in F$,
     \begin{align*}
      (b_0+b_1\gamma+...+b_j\gamma^j+b_{j+1}\gamma^{j+1}+...+b_m\gamma^m)A=\\
     (b_0+b_1\gamma+...+b_j\gamma^j)A+(b_{j+1}\gamma^{j+1}+...+b_m\gamma^m)A.
     \end{align*}
     In particular, if $d\in F$, $B\subseteq K$ with $B=b_0+b_1\gamma+b_2\gamma^2+...+b_m\gamma^m$ and $r>m$, then
     $$(B+d\gamma^r)A=AB+d\gamma^rA.$$
 \end{enumerate}
\end{lem}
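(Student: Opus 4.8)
The plan is to mimic the proof of Lemma \ref{lemfator}, transporting the argument from $F[X]$ to the simple extension $F[\gamma,K]$ via the hypothesis that $K$ is $F$-almost full relative to $\gamma$. The key point is that the defining identity of Definition \ref{almostfull}, namely $(a\gamma^p+b\gamma^q+c\gamma^r)\gamma = a\gamma^{p+1}+b\gamma^{q+1}+c\gamma^{r+1}$ for distinct $p,q,r$, together with fullness of $K$ (which gives the weak distributive law as an equality, and in particular $d(x_1+\dots+x_k)=dx_1+\dots+dx_k$ by Lemma \ref{lembasic1}(c)), plays exactly the role that Lemma \ref{lemfator}(iii) played in the polynomial setting. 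So the strategy is: first establish the three-term base identities, then bootstrap to arbitrary finitely many terms by induction.

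First I would prove item (i). Since $K$ is full, the inclusion $d(b+c\gamma)A \subseteq \dots$ direction is automatic from the weak distributive law; for the reverse, I would expand $(b+c\gamma)A$ where $A = a_0+a_1\gamma+\dots+a_n\gamma^n$, using fullness to distribute $b$ and $c\gamma$ across the $n+1$ terms of $A$, collect the resulting monomials $a_i b\,\gamma^i$ and $a_i c\,\gamma^{i+1}$, and compare with the expansion of $bA + c\gamma A$. The matching of coefficients is exactly as in the computation $(*)$ and $(**)$ in the proof of Lemma \ref{lemfator}; the almost-full hypothesis is what licenses regrouping $a_i b\,\gamma^i + a_{i-1}c\,\gamma^i$ as a genuine coefficient in degree $i$. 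I would write this out carefully for $n$ general, noting that only fullness (not almost-fullness) is strictly needed here since multiplication by the single element $\gamma$ is involved — but it is cleanest to cite Lemma \ref{lembasic1}(c) and the definition of the generated superfield directly.

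Then for item (ii), I would argue by induction on the number of terms exactly as in Lemma \ref{lemfator}(iii)--(vi): the three-term case $(a\gamma^p+b\gamma^q+c\gamma^r)A = a\gamma^p A + b\gamma^q A + c\gamma^r A$ follows by combining item (i), fullness, and the almost-full identity (to handle the pure powers of $\gamma$ multiplied into $A$), precisely as $(***)$ was derived; and the general statement $(b_0+\dots+b_m\gamma^m)A = (b_0+\dots+b_j\gamma^j)A + (b_{j+1}\gamma^{j+1}+\dots+b_m\gamma^m)A$ then follows by a straightforward induction on $m$ (or on $m-j$), splitting off one term at a time and invoking the three-term case. The ``in particular'' clause, with $B = b_0+\dots+b_m\gamma^m$ and $r>m$, is the instance $j=m$ of the general identity applied to $B + d\gamma^r$. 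I should note the typo ``$AB$'' in the displayed ``in particular'' clause ought to read $BA$ (i.e. $(B+d\gamma^r)A = BA + d\gamma^r A$), matching the rest.

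The main obstacle — really the only subtle point — is keeping track of exactly where \emph{fullness} of the extension $K|_fF$ suffices and where the \emph{almost-full} hypothesis relative to $\gamma$ is genuinely needed. Fullness alone gives distributivity of a fixed element over a finite sum (Lemma \ref{lembasic1}(c)), which handles item (i) and the ``easy'' inclusions everywhere; the almost-full identity is the extra input that lets us push $\gamma$ (and hence $\gamma^k$) \emph{into} a sum of monomials of \emph{distinct} degrees and get back a sum of monomials rather than a larger set — this is the analogue of the fact that in $F[X]$ one has $(a X^p + bX^q + cX^r)f = \dots$ even though $F[X]$ is not full. Since $A$ has monomials in all degrees $0,\dots,n$, one must be slightly careful that the degrees occurring are handled by the (distinct-exponent) almost-full hypothesis; this is managed by first distributing via fullness and only then regrouping. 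Beyond this bookkeeping the proof is a routine transcription of Lemma \ref{lemfator}, so I would simply say ``Similar to Lemma \ref{lemfator}'' after indicating these two points, as the paper does for Lemma \ref{lemfator2}.
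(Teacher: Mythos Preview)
Your plan matches the paper exactly: its entire proof is ``Similar to Lemma~\ref{lemfator}.'' One caution, though: you repeatedly invoke ``fullness of $K$'' and cite Lemma~\ref{lembasic1}(c), but the hypothesis that $K|_fF$ is a \emph{full extension} means only that the inclusion $F\hookrightarrow K$ is a full \emph{morphism} (Definition~\ref{extension}(iii)); it does \emph{not} assert that $K$ is a full superring, so Lemma~\ref{lembasic1}(c) is not directly available. The heavy lifting is carried by the almost-full hypothesis (Definition~\ref{almostfull}), which is precisely the analogue in $K$ of the coefficient-wise structure of $F[X]$ that drives Lemma~\ref{lemfator}(i)--(iii); recall that $F[X]$ itself is typically \emph{not} full, so full distributivity was never the mechanism there either. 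Once you drop the appeals to fullness of $K$ and run the $(*)$/$(**)$ comparison using the almost-full identity (and its iterates under multiplication by powers of $\gamma$) in place of the coefficient formulas, your sketch goes through as written.
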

\begin{proof}
 Similar to Lemma \ref{lemfator}.
\end{proof}

\begin{lem}\label{almostfact}
 Let $K|_fF$ be a full extension $F$-almost full relative to $\gamma$. Then:
 \begin{enumerate}[i -]
     \item $K=F[\gamma,K]$;
     \item If $K|_fF$ and $L|_fK$ are almost full then $L|F$ is almost full;
     \item If $L|_fF$ is another full extension and $\pi:K\rightarrow L$ is a full surjective morphism, then $L|_fF$ is $F$-almost full relative to $\pi(\gamma)$;
     \item For all $a_0,...,a_n,b_0,...,b_n\in F$,
     \begin{align*}
         &(a_0+a_1\gamma+a_2\gamma^2+...+a_{n-1}\gamma^{n-1}+a_n\gamma^n)
         (b_0+b_1\gamma+b_2\gamma^2+...+b_{n-1}\gamma^{n-1}+b_n\gamma^n)\subseteq\\
         &a_0b_0+\left(\sum^1_{j=0}a_jb_{1-j}\right)\gamma+...+\left(\sum^{2n-1}_{j=0}a_jb_{(2n-1)-j}\right)\gamma^{2n-1}+\left(\sum^{2n}_{j=0}a_jb_{1-j}\right)\gamma^{2n}
     \end{align*}
     with the convention $a_j=b_j=0$ if $j>n$.
 \end{enumerate}
\end{lem}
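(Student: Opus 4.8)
The plan is to prove the four items of Lemma~\ref{almostfact} in the order they are stated, each one feeding into the next, reducing everything back to the ``almost full'' identity of Definition~\ref{almostfull} and the basic facts already available (Lemma~\ref{lembasic1}, Lemma~\ref{factstrong2}, Lemma~\ref{lemfator3}).

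For item (i), I would argue that $K = F[\gamma,K]$ as follows. By hypothesis $K$ is $F$-generated by $\{1,\gamma^2,\dots,\gamma^n\}$ — note that (presumably) ``$\{1,\gamma^2,\dots,\gamma^n\}$'' should read $\{1,\gamma,\gamma^2,\dots,\gamma^n\}$ — so every element of $K$ lies in some sum $a_0 + a_1\gamma + \dots + a_m\gamma^m$ with $a_i \in F$. But such a sum is precisely an element of $ev(f,\gamma,K)$ for the polynomial $f(X) = a_0 + a_1 X + \dots + a_m X^m \in F[X]$, hence lies in $F[\gamma,K] = \bigcup_{f \in F[X]} ev(f,\gamma,K)$. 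The reverse inclusion $F[\gamma,K] \subseteq K$ is trivial since $K$ is a superfield closed under its operations. So the only subtlety is the typo in the generating set; modulo that, item (i) is immediate.

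For item (ii), suppose $K|_fF$ is $F$-almost full relative to $\gamma$ and $L|_fK$ is $K$-almost full relative to some $\delta \in L$; I want $L|_fF$ almost full relative to $\delta$. First, $L$ is $F$-generated by powers of $\delta$: by item (i) applied to $L|_fK$, $L = K[\delta,L]$, and by item (i) applied to $K|_fF$, every coefficient in $K$ is itself a polynomial expression in $\gamma$ with coefficients in $F$; substituting and using Lemma~\ref{factstrong2}(d) (or the ``Newton binom'' remark) one sees $L$ is $F$-generated by $\{1,\delta,\delta^2,\dots\}$ — here I should be careful that finiteness is not claimed in the conclusion, only generation, so this is fine. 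Then the identity $(a\delta^p + b\delta^q + c\delta^r)\delta = a\delta^{p+1}+b\delta^{q+1}+c\delta^{r+1}$ for $a,b,c \in F \subseteq K$ is just the $K$-almost-full identity for $L|_fK$ with coefficients taken in $F$. So item (ii) reduces to checking the generation claim and then quoting the hypothesis on $L|_fK$.

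For item (iii), given a full surjective morphism $\pi: K \to L$ with $L|_fF$ a full extension, I would push the almost-full identity through $\pi$. Since $\pi$ is full, $\pi\big((a\gamma^p + b\gamma^q + c\gamma^r)\gamma\big) = (\pi(a)\pi(\gamma)^p + \pi(b)\pi(\gamma)^q + \pi(c)\pi(\gamma)^r)\pi(\gamma)$ by Lemma~\ref{factstrong2}(a)--(d), and similarly $\pi(a\gamma^{p+1}+b\gamma^{q+1}+c\gamma^{r+1}) = \pi(a)\pi(\gamma)^{p+1}+\pi(b)\pi(\gamma)^{q+1}+\pi(c)\pi(\gamma)^{r+1}$; for $a,b,c \in F$ we have $\pi(a) = a$ etc.\ since $\pi$ fixes $F$ (as both are extensions of $F$ — this needs that $\pi|_F = \mathrm{id}$, which I would take as part of ``morphism of extensions''). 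Applying $\pi$ to both sides of the $F$-almost-full identity for $K$ and using surjectivity to see $L$ is generated by powers of $\pi(\gamma)$, the identity transfers verbatim. The mild obstacle here is making precise that $\pi$ restricts to the identity on $F$; if that is not built into the setup I would instead only claim the identity holds with $\pi(a),\pi(b),\pi(c)$ ranging over $F = \pi(F)$, which is enough.

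Item (iv) is the computational heart and the step I expect to be the main obstacle. I would prove the displayed containment by iterating Lemma~\ref{lemfator3}: write $B = b_0 + b_1\gamma + \dots + b_n\gamma^n$ and expand $(\sum a_i\gamma^i)\,B$ using the weak distributive law $d(x+y) \subseteq dx + dy$ of the superring together with Lemma~\ref{lemfator3}(ii) to split off one monomial of $B$ at a time, so that $(\sum_i a_i\gamma^i)(\sum_j b_j\gamma^j) \subseteq \sum_j \big[(\sum_i a_i\gamma^i)\, b_j\gamma^j\big]$; then expand each $(\sum_i a_i\gamma^i) b_j\gamma^j$ again by Lemma~\ref{lemfator3}(i)/(ii) and the almost-full identity to get $\subseteq \sum_i a_i b_j \gamma^{i+j}$; finally regroup the double sum $\sum_{i,j} a_i b_j \gamma^{i+j}$ by total degree $k = i+j$, collecting $\big(\sum_{i+j=k} a_i b_j\big)\gamma^k$ for $k = 0, \dots, 2n$, using Lemma~\ref{lembasic1}(a) to reorder freely and the convention $a_j = b_j = 0$ for $j > n$. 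The bookkeeping — getting every application of weak distributivity pointed the right way (containments, not equalities), and handling that $R[X]$-style non-fullness means we only get ``$\subseteq$'' — is the delicate part; I would do it by an explicit induction on $n$ (and possibly an inner induction on the number of monomials already peeled off), exactly parallel to the proof of Lemma~\ref{lemfator}, which is why the statement of the Lemma says ``Similar to Lemma~\ref{lemfator}'' would be the honest summary. Note there is a typo in the last summand of the display: $\big(\sum_{j=0}^{2n} a_j b_{1-j}\big)\gamma^{2n}$ should be $\big(\sum_{j=0}^{2n} a_j b_{2n-j}\big)\gamma^{2n}$, i.e.\ $a_n b_n \gamma^{2n}$.
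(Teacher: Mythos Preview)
The paper states Lemma~\ref{almostfact} without proof, so there is no argument in the paper to compare yours against. Your outline for items (i), (iii), and (iv) is sound and is exactly what one would expect: (iv) in particular is the same bookkeeping as in Lemma~\ref{lemfator}, and your identification of the two typos (the missing $\gamma$ in the generating set, and the index $1-j$ that should be $2n-j$ in the last displayed summand) is correct.

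For item (ii), however, there is a real gap in your argument. You claim $L$ is $F$-generated by powers of $\delta$ by ``substituting'' the $F$-polynomial expressions (in $\gamma$) for the $K$-coefficients appearing in $L = K[\delta,L]$. But that substitution produces $F$-combinations of mixed monomials $\gamma^i\delta^j$, not of powers of $\delta$ alone; nothing in the hypotheses forces $\gamma \in K$ to lie in an $F$-span of powers of $\delta$. Thus either Definition~\ref{almostfull} is being read more loosely in (ii) than its literal wording (e.g.\ generation by a finite set rather than by powers of a single element), or an additional hypothesis is tacitly assumed. Since the paper supplies no proof and never invokes item (ii) elsewhere, the intended content is unclear --- but as literally stated, your generation step does not go through.
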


Let $K|_fF$ be a full extension and $\alpha\in K$ algebraic over $F$. 
Our aim is to provide an almost full algebraic extension $F(\alpha)|_fF$ containing $F$ and $\alpha$. The key to that is to find a way to describe algebraic elements of $K$. Here we have a first result in this direction.

\begin{teo}[Almost Full Newton's Binom]
Let $K|F$ be an almost full superfield extension $F$-generated by $\{1,\gamma,...,\gamma^n\}$, $\gamma\in K$. Then for all $a,b\in F$,
$$(a+b\gamma)^n=\sum^n_{j=0}\binom{n}{j}a^j(b\gamma)^{n-j}.$$
\end{teo}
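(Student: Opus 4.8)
The plan is to prove the identity by induction on $n$, exactly mirroring the proof of the classical binomial theorem but using Lemma \ref{almostfact}(iv) (or its building block Lemma \ref{lemfator3}(i)) to legitimize the only step that is non-trivial in the multivalued setting: distributing $(a+b\gamma)$ over a sum of monomials in $\gamma$. The base cases $n=0$ and $n=1$ are immediate: for $n=0$ both sides equal $\{1\}$, and for $n=1$ both sides equal $a+b\gamma$ (the right-hand side is $\binom{1}{0}a^1(b\gamma)^0+\binom{1}{1}a^0(b\gamma)^1 = a+b\gamma$).

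For the inductive step, assume the identity holds for $n$, and note that since $K$ is $F$-generated by $\{1,\gamma,\dots,\gamma^n\}$, a power like $\gamma^{n+1}$ lives inside $K$ and can be rewritten as an $F$-combination of $1,\gamma,\dots,\gamma^n$; in particular $K$ is also $F$-generated by $\{1,\gamma,\dots,\gamma^{n+1}\}$ and the hypothesis ``almost full relative to $\gamma$'' continues to apply. First I would write
$$(a+b\gamma)^{n+1} = (a+b\gamma)\cdot(a+b\gamma)^n \subseteq (a+b\gamma)\cdot\sum_{j=0}^n\binom{n}{j}a^j(b\gamma)^{n-j}$$
using the induction hypothesis and monotonicity of multiplication. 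Each summand $\binom{n}{j}a^j(b\gamma)^{n-j}$ is (a subset of) an $F$-combination of the single power $\gamma^{n-j}$, so the whole sum is an $F$-combination of $1,\gamma,\dots,\gamma^n$; hence Lemma \ref{lemfator3}(i) (distribution of $(b+c\gamma)$ over such an element, applied with the roles of the coefficients) — or more directly Lemma \ref{almostfact}(iv) — gives
$$(a+b\gamma)\cdot\sum_{j=0}^n\binom{n}{j}a^j(b\gamma)^{n-j} = a\sum_{j=0}^n\binom{n}{j}a^j(b\gamma)^{n-j} + b\gamma\sum_{j=0}^n\binom{n}{j}a^j(b\gamma)^{n-j}.$$
Distributing $a$ and $b\gamma$ through (again using Lemma \ref{lembasic1}(c), valid since $K$ is full, and Lemma \ref{lemfator3}) yields $\sum_{j=0}^n\binom{n}{j}a^{j+1}(b\gamma)^{n-j} + \sum_{j=0}^n\binom{n}{j}a^j(b\gamma)^{n-j+1}$, and re-indexing and applying Pascal's rule $\binom{n}{j-1}+\binom{n}{j}=\binom{n+1}{j}$ collapses this to $\sum_{j=0}^{n+1}\binom{n+1}{j}a^j(b\gamma)^{n+1-j}$. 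This establishes the inclusion ``$\subseteq$''.

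The reverse inclusion ``$\supseteq$'' requires more care, since in a multiring equalities of sums are genuinely stronger than inclusions; here is where the full hypotheses on the extension and the almost-full condition do real work. I would argue that every chain of equalities above is in fact an equality, not merely an inclusion: Lemma \ref{lemfator3} and Lemma \ref{almostfact}(iv) are stated as equalities (the ``in particular'' clauses), and Lemma \ref{lembasic1}(c) is an equality for full superrings; the only inclusion used was the induction hypothesis $(a+b\gamma)^n = \sum\binom{n}{j}a^j(b\gamma)^{n-j}$, which by induction is an equality, and multiplying an equality of subsets on the left by the fixed set $(a+b\gamma)$ preserves equality. Thus the whole computation is a string of equalities and the two inclusions coincide. \textbf{The main obstacle} I anticipate is making Lemma \ref{almostfact}(iv) into an equality rather than the inclusion as stated in the excerpt — so in practice I would avoid invoking (iv) directly and instead build the step from Lemma \ref{lemfator3}(i), which \emph{is} an equality, by peeling off one monomial at a time from $\sum_{j=0}^n\binom{n}{j}a^j(b\gamma)^{n-j}$ (grouping it as $\big[\text{lower terms}\big] + \binom{n}{0}a^n \cdot$ etc.), so that at each stage I am distributing $(a+b\gamma)$ over a bona fide ``$g + d\gamma^r$''-shaped expression to which the ``in particular'' clause of Lemma \ref{lemfator3}(ii) applies verbatim. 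A secondary point to check is the bookkeeping when some exponents $n-j$ coincide after distribution — but since the $j$ range over distinct values and the two shifted sums interleave cleanly, Pascal's identity applies just as in the classical case.
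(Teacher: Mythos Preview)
Your proposal is correct and follows the same route as the paper: induction on $n$, with Lemma~\ref{lemfator3}(i) supplying the key distributivity $(a+b\gamma)A=aA+b\gamma A$ as an \emph{equality}, so that the whole chain is an equality and not merely an inclusion. The paper's proof is terser --- it writes out only the case $n=2$ and leaves the Pascal-rule induction you carry out as implicit; one minor caution is that your appeal to Lemma~\ref{lembasic1}(c) presumes $K$ is a full superring, which is not part of the hypothesis, but the needed distributivity is already delivered by Lemma~\ref{lemfator3} (using commutativity to swap the factors), so no harm is done.
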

\begin{proof}
By induction is enough to prove the case $n=2$. We have
  \begin{align*}
    (a+b\gamma)^2&:=(a+b\gamma)(a+b\gamma)\stackrel{\ref{lemfator3}}{=} a(a+b\gamma)+b\gamma(a+b\gamma)= a^2+ab\gamma+b\gamma a+(b\gamma)^2 \\
    &=a^2+ab\gamma+ab\gamma+(b\gamma)^2=a^2+2ab\gamma+(b\gamma)^2:=\sum^2_{j=0}\binom{n}{j}a^j(b\gamma)^{n-j}.
  \end{align*}
\end{proof}

\begin{teo}\label{teohell3}
 Let $K|F$ be an almost full superfield extension $F$-generated by $\{1,\gamma,...,\gamma^n\}$, $\gamma\in K$. If 
 $$0\in ev(g,(a_0+a_1\gamma+a_2\gamma^2+...+a_{p-1}\gamma^{p-1}+a_p\gamma^p),K)$$
 then there exist $h\in F[X]$ with
 $$0\in ev(h,a_0\gamma+a_1\gamma^2+a_2\gamma^3+...+a_{p-1}\gamma^{p}+a_p\gamma^{p+1}),K).$$
\end{teo}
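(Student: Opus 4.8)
The plan is to reduce the statement to a claim about a single element and then to build the witnessing polynomial $h$ by hand. Unwinding the hypothesis, $0\in ev(g,\delta,K)=\bigcup_{d\in\delta}ev(g,d,K)$ with $\delta:=a_0+a_1\gamma+\dots+a_p\gamma^p$, so there is a genuine element $d\in\delta$ with $0\in ev(g,d,K)$. On the other side, since $K|F$ is almost full relative to $\gamma$, Lemma \ref{lemfator3} (together with Definition \ref{almostfull}) gives $\gamma\delta=a_0\gamma+a_1\gamma^2+\dots+a_p\gamma^{p+1}$, and clearly $\gamma d\in\gamma\delta$; hence it suffices to produce $h\in F[X]$ with $0\in ev(h,\gamma d,K)$. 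So the theorem becomes: if $d\in K$ is $K$-algebraic over $F$, then $\gamma d$ is again $K$-algebraic over $F$. I would also record here that $\gamma$ is itself $K$-algebraic over $F$: as $K$ is $F$-generated by $\{1,\gamma,\dots,\gamma^n\}$, any element of the $(n{+}1)$-fold product $\gamma^{n+1}$ lies in some $t_0+t_1\gamma+\dots+t_n\gamma^n$ with $t_i\in F$, so $0\in ev(X^{n+1}-t_nX^n-\dots-t_0,\gamma,K)$; iterating, every power $\gamma^k$ (and, since everything in $K$ lies in the $F$-span of $\{1,\dots,\gamma^n\}$, also $\gamma^{-1}$) is an $F$-combination of $1,\gamma,\dots,\gamma^n$.

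Now to construct $h$: write $g(X)=c_0+c_1X+\dots+c_mX^m$ (taking $g$ of least degree with $0\in ev(g,d,K)$, so $c_m\neq 0$) and set $e:=\gamma d$. Using $(\gamma d)^i=\gamma^i d^i$ and $\gamma^a\gamma^b=\gamma^{a+b}$ (consequences of commutativity and associativity of the multioperations, in the spirit of the Lemma \ref{lemfator} computations), multiply the relation $0\in c_0+c_1d+\dots+c_md^m$ by a representative of $\gamma^m$ and distribute (weak distributivity) to obtain $0\in\sum_{i=0}^m c_i\gamma^{m-i}(\gamma d)^i$; picking representatives at the level where $0$ actually sits yields an honest relation $0\in u_0+u_1e+\dots+u_me^m$ with coefficients $u_i\in K$. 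Expanding each $u_i$ in the spanning set $\{1,\gamma,\dots,\gamma^n\}$ and regrouping the $\gamma^j$-contributions (commutativity/associativity of sums from Lemma \ref{lembasic1}, and weak distributivity) lands me at $0\in\gamma^0H_0(e)+\gamma^1H_1(e)+\dots+\gamma^nH_n(e)$ with all $H_j\in F[X]$. The decisive step is then to eliminate the $\gamma$'s: playing the monic relation for $\gamma$ against this one — multiply through by $\gamma$, rewrite the resulting $\gamma^{n+1}$, and iterate, using Lemma \ref{lemfator3} and the product-in-standard-form estimate of Lemma \ref{almostfact}(iv) to keep control — one absorbs all powers of $\gamma$ into the $F$-coefficients and arrives at a single $h\in F[X]$ with $0\in ev(h,e,K)$. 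I expect this last step to be run as an induction, most naturally on $p$ (peeling the top term $a_p\gamma^{p+1}$ and applying the inductive hypothesis to the tail) or on $n$.

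The hard part is precisely this $\gamma$-elimination, and the reason is structural: neither $K$ nor $F[X]$ is full, so the weak distributive law only ever gives inclusions $u(A+B)\subseteq uA+uB$, never equalities. Consequently the textbook shortcuts for ``algebraic over algebraic is algebraic'' all collapse here — a Sylvester resultant $\mathrm{Res}_Y(V(X,Y),E(Y))$ lands in a set of polynomials that contains $0$ after evaluation but need not contain the precise polynomial one names, and a pigeonhole/linear-dependence count in the finitely generated $K$ fails because a singular linear system over a superfield need not have vanishing determinant; in both cases ``$0\in(\text{a large, expanded sum})$'' refuses to localize back into the sum we need. The almost-full hypothesis is exactly the device that repairs this: via Lemma \ref{lemfator3} it upgrades precisely the inclusions coming from multiplying a standard form by a power of $\gamma$ to equalities, so the whole construction must be arranged so that every application of distributivity is either one of these ``safe'' equalities or is used only in the benign direction (enlarging a set already known to contain $0$). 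Carrying the bookkeeping of chosen representatives through that elimination is where essentially all of the effort in the proof goes.
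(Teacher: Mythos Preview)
Your reduction to ``$d$ algebraic over $F$ implies $\gamma d$ algebraic over $F$'' is correct, and so is your observation that $\gamma\delta = a_0\gamma+\cdots+a_p\gamma^{p+1}$ via Lemma~\ref{lemfator3}. Where your argument genuinely stalls is the $\gamma$-elimination. You arrive at a relation $0\in \sum_{j=0}^n \gamma^j H_j(e)$ with $H_j\in F[X]$ and then propose to ``play the monic relation for $\gamma$'' against it by multiplying through by $\gamma$ and rewriting $\gamma^{n+1}$. But that operation simply produces another relation of the same shape, $0\in\sum_{j}\gamma^j \widetilde H_j(e)$; it does not reduce the number of $\gamma$-powers present. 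In the classical setting one would finish with a resultant or a determinant, and you correctly note that both of those localizations fail in a superfield. So as written you have no mechanism that actually collapses the $\gamma^j$'s down to a single $F[X]$-polynomial in $e$, and the ``almost full'' hypothesis, while it upgrades certain distributivity inclusions to equalities, does not by itself supply such a mechanism. This is not a bookkeeping issue; it is the missing idea.

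The paper sidesteps this difficulty by \emph{not starting from $g$ at all}. It proceeds by induction on $p$, and in the base case $p=0$ (so $\delta=\{a\}$ with $a\in F$) it works instead from the minimal polynomial $f(X)=d_0+\cdots+d_nX^n$ of $\gamma$: since $a\in F$ is invertible, one has the inclusion
\[
d_0+d_1\gamma+\cdots+d_n\gamma^n\ \subseteq\ d_0+d_1a^{-1}(a\gamma)+\cdots+d_na^{-n}(a\gamma)^n,
\]
using $1\in a^{-i}a^i$, so $0$ lies in the right-hand side; choosing $z_i\in d_ia^{-i}$ then gives $h(X)=d_0+z_1X+\cdots+z_nX^n\in F[X]$ with $0\in ev(h,a\gamma)$. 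No $\gamma$-elimination is needed because the scaling factor $a$ lives in $F$, so the rescaled coefficients $d_ia^{-i}$ are already subsets of $F$. (The paper's written proof in fact stops after this base case, so the inductive step for $p\ge 1$ is left implicit there as well; but the point is that the intended engine is the minimal polynomial of $\gamma$ together with rescaling by an $F$-invertible element, not a resultant-style elimination from $g$.) If you want to repair your approach, the place to look is the interaction between this $p=0$ trick and the inductive structure of Theorem~\ref{teohell1}, rather than trying to eliminate $\gamma$ directly from $\sum_j\gamma^jH_j(e)$.
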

\begin{proof}
Let $f(X)=\mbox{Irr}(F,\gamma)$ with $f(X)=d_0+d_1X+...+d_nX^n$. We proceed by induction on $p$.

If $p=0$, let $a\in F$. Suppose without loss of generality that $a\ne0$. Consider the set
$$A=d_0+d_1a^{-1}X+d_2a^{-2}X^2+...+d_na^{-n}X^n.$$
Of course, since $1\in a^{-n}a^n$ for all $n\ge1$,
$$d_0+d_1\gamma+...+d_n\gamma^n\subseteq d_0+d_1a^{-1}(a\gamma)+d_2a^{-2}(a\gamma)^2+...+d_na^{-n}(a\gamma)^2$$
Moreover,
$$0\in d_0+d_1\gamma+...+d_n\gamma^n\subseteq d_0+d_1a^{-1}(a\gamma)+d_2a^{-2}(a\gamma)^2+...+d_na^{-n}(a\gamma)^2,$$
which means
$$0\in d_0+d_1a^{-1}(a\gamma)+d_2a^{-2}(a\gamma)^2+...+d_na^{-n}(a\gamma)^2.$$
Then for $i=1,...,n$ there exist $z_i\in d_ia^{-i}$ with
$$0\in d_0+z_1(a\gamma)+z_2(a\gamma)^2+...+z_n(a\gamma)^2.$$
Hence, for $h(X)=d_0+z_1X+...+z_{n-1}X^{n-1}+z_nX^n$ we have $0\in ev(h,a\gamma)$.
\end{proof}

\begin{teo}\label{teohell1}
 Let $K|F$ be an almost full superfield extension $F$-generated by $\{1,\gamma,...,\gamma^n\}$, $\gamma\in K$. Let $n\ge0$, $f(X)=\mbox{Irr}_F[\gamma,K]$ and $a_0,...,a_p\in F$. Then there exists some polynomial $g\in F[X]$ such that 
$$0\in ev(g,(a_0+a_1\gamma+a_2\gamma^2+...+a_{p-1}\gamma^{p-1}+a_p\gamma^p),K).$$
\end{teo}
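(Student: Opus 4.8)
The plan is to induct on $p$, using Theorem~\ref{teohell3} to push algebraicity through multiplication by $\gamma$, and isolating as a separate step the fact that adding a scalar of $F$ to an algebraic element produces a multisum that still contains an algebraic element. (The classical intuition — ``$F[\gamma,K]$ is finite over $F$, hence algebraic'' — does not transfer directly, since dimension counting interacts badly with multivalued operations; the almost-full hypothesis is exactly what compensates for this.)

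The base case $p=0$ is immediate: $a_0\in F$ and $0\in a_0-a_0\subseteq ev(X-\underline{a_0},a_0,K)$, so $a_0$ is a root of $X-\underline{a_0}\in F[X]$. For $p\geq 1$, assume the statement for all smaller exponents and all coefficients in $F$. Applying the induction hypothesis to the tuple $(a_1,a_2,\dots,a_p)$ yields $g_1\in F[X]$ with $0\in ev(g_1,\,a_1+a_2\gamma+\dots+a_p\gamma^{p-1},\,K)$. Now Theorem~\ref{teohell3}, applied with $a_1,\dots,a_p$ in the roles of its $a_0,\dots,a_{p-1}$, furnishes $h\in F[X]$ with $0\in ev(h,\,a_1\gamma+a_2\gamma^2+\dots+a_p\gamma^p,\,K)$; fix $t\in a_1\gamma+\dots+a_p\gamma^p$ with $0\in ev(h,t,K)$. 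Since, by Lemma~\ref{lembasic1}(a), $a_0+t\subseteq a_0+(a_1\gamma+\dots+a_p\gamma^p)=a_0+a_1\gamma+\dots+a_p\gamma^p$, it suffices to exhibit an algebraic element inside $a_0+t$.

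For that, write $h(X)=c_0+c_1X+\dots+c_mX^m$ and pick any $u\in a_0+t$, so $t\in u-a_0$ by axioms M1 and M4. For each $k\leq m$, the Newton binomial formula (Lemma~\ref{lembasic1}(e)) gives $t^k\in(u-a_0)^k\subseteq\sum_{j=0}^k\binom kj u^j(-a_0)^{k-j}$, so, absorbing $c_k$ by the weak distributive law, each $c_kt^k$ lies in an iterated sum of sets of the shape $E\cdot u^j$ with $E\subseteq F$ a multisum of products formed from the $c_k$'s, $-a_0$ and their integer multiples. Substituting these into a chain witnessing $0\in ev(h,t,K)=c_0+c_1t+\dots+c_mt^m$, reordering the resulting terms by the exponent of $u$ (Lemma~\ref{lembasic1}(a)), and selecting for each exponent one coefficient from the pertinent subset of $F$, we obtain $d_0,\dots,d_m\in F$ with $0\in d_0+d_1u+\dots+d_mu^m=ev(g,u,K)$ for $g:=d_0+d_1X+\dots+d_mX^m\in F[X]$. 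Hence $u$ is algebraic over $F$, which closes the induction.

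The delicate point is the final reorganization: because $K$ need not be full, only the weak distributive law is available, so the terms coming from the binomial expansions of distinct powers of $t$ cannot be added coefficient-wise; one has to follow a witnessing chain element by element and use the invertibility of $u$ in $K$ (the case $u=0$ being trivial) in order to merge the terms of equal degree in $u$, in the same spirit as the scaling device employed in the proof of Theorem~\ref{teohell3}. The almost-full hypothesis, through Lemmas~\ref{lemfator3} and~\ref{almostfact}, is what keeps every intermediate expression among the $F$-combinations of powers of $\gamma$ (and of $u$), so that these manipulations stay inside a controlled family of subsets of $K$ and the coefficients $d_j$ can genuinely be chosen in $F$.
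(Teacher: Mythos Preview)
Your overall inductive scheme matches the paper's: reduce to the shorter tuple $(a_1,\dots,a_p)$, invoke Theorem~\ref{teohell3} to pass to $a_1\gamma+\dots+a_p\gamma^p$, then restore the constant term $a_0$. The divergence, and the gap, lies in this last step.

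You fix $u\in a_0+t$, write $t\in u-a_0$, expand each $t^k$ via the weak Newton inclusion of Lemma~\ref{lembasic1}(e), and then claim the resulting terms can be ``reorganized by the exponent of $u$'' to produce coefficients $d_j\in F$. But after substitution you have, for each fixed $j$, several summands of the form $s_{k,j}\,u^j$ with $s_{k,j}\in F$ (one coming from each $k\ge j$). To obtain a single term $d_j u^j$ you need an element of $\sum_k s_{k,j}\,u^j$ to lie in $d_j u^j$ for some $d_j\in\sum_k s_{k,j}$; weak distributivity gives only the reverse inclusion $d_j u^j\subseteq\sum_k s_{k,j}\,u^j$. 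Your final paragraph appeals to the invertibility of $u$ and to the scaling trick of Theorem~\ref{teohell3}, but that trick replaces each $\gamma^i$ by a \emph{single} $(a\gamma)^i$ term and never has to merge like powers; invertibility of $u$ does not supply the missing inclusion either. Crucially, Lemmas~\ref{lemfator3} and~\ref{almostfact} concern powers of $\gamma$---the element with respect to which $K$ is almost full---and say nothing about distributivity over an arbitrary $u\in K$. So the reorganization step is asserted, not proved.

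The paper handles ``add $a_0$'' differently and avoids reorganizing in powers of $u$ altogether. Starting from a relation $0\in e_0+e_1\gamma+\dots+e_{n-1}\gamma^{n-1}+[\text{tail}]$, it uses the Almost Full Newton Binomial---an \emph{equality} of sets, available precisely because $K$ is almost full relative to $\gamma$---together with Lemma~\ref{lemfator3} to rewrite the highest $\gamma$-term as a power of the target expression minus lower-order $\gamma$-corrections, then folds those corrections into the existing $e_j$'s and iterates. Every intermediate expression stays written in powers of $\gamma$, which is exactly where the almost-full hypothesis operates. Your argument uses only the generic inclusion $(u-a_0)^k\subseteq\sum_j\binom{k}{j}u^j(-a_0)^{k-j}$, valid in any superring, and thereby forfeits the one structural tool that could justify the collection.
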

\begin{proof}
 Let $\gamma$, $f(X)\in F[X]$, with $f(X)=d_0+d_1\gamma+...+d_{n-1}X^{n-1}+d_nX^n$, $p\in\mathbb N$ and $a_0,...,a_p\in F$ in the hypothesis of the Theorem. If $p=0$ there is nothing to prove. for $p=1$ let $a,b\in F$. Then we need to prove that there exist a polynomial  $g(X)\in F[X]$ such that $0\in ev(g,a+b\gamma)$. We can suppose without loss of generality that $b\ne0$. Then
$$0\in d_0+d_1\gamma+...+d_{n-1}\gamma^{n-1}+d_n\gamma^n.$$
Since (using Lemma \ref{lemfator3}) we have
$$d_n(b\gamma)^n\subseteq d_n\left[(a+b\gamma)^n-\sum^{n-1}_{j=0}\binom{n}{j}(b\gamma)^ja^{n-j}\right]
\subseteq -\sum^{n-1}_{j=0}\binom{n}{j}d_n(b\gamma)^ja^{n-j}+d_n(a+b\gamma)^n,$$
we conclude
\begin{align*}
 0&\in d_0+d_1\gamma+...+d_{n-1}\gamma^{n-1}+d_n\gamma^n\Rightarrow \\
 0&\in d_0b^n+d_1b^n\gamma+...+d_{n-1}b^n\gamma^{n-1}+d_nb^n\gamma^n\Rightarrow \\
 0&\in d_0b^n+d_1b^n\gamma+...+d_{n-1}b^n\gamma^{n-1}
 -\sum^{n-1}_{j=0}\binom{n}{j}d_n(b\gamma^j)a^{n-j}+d_n(a+b\gamma)^n\Rightarrow \\
0&\in\left[d_0b^n-\binom{n}{0}d_na^{n}\right]+\left[d_1b^n-\binom{n}{1}d_nba^{n-1}\right]\gamma+...+ 
\left[d_{n-1}b^n-\binom{n}{n-1}d_nb^{n-1}a\right]\gamma^{n-1}+d_n(a+b\gamma)^n.
\end{align*}
Then,
$$0\in e_0+e_1\gamma+...+e_{n-1}\gamma^{n-1}+d_n(a+b\gamma)^n\mbox{ for some }e_j\in d_jb^n-\binom{n}{j}d_nb^ja^{n-j},\,j=0,...,n-1.$$
Repeating this process $n-1$ times more, we arrive at an expression
$$0\in z_0+z_1(a+b\gamma)+...+z_{n-1}(a+b\gamma)^{n-1}+z_n(a+b\gamma)^n$$
for some $z_j\in F$, $j=0,...,n$. Then $0\in ev(g,a+b\gamma)$ for the polynomial
$$g(X)=z_0+z_1X+...+z_{n-1}X^{n-1}+z_nX^n.$$

For the general case, suppose the desired valid for $p$ and let $a_0,...,a_{p+1}\in F$. Then we need to prove that there exist a polynomial 
$g(X)\in F[X]$ such that $0\in ev(g,a_0+a_1\gamma+...+a_{p+1}\gamma^{p+1},K)$. Note that, with Proposition \ref{teohell3}, if $0\in ev(g,a_0+a_1\gamma+...+a_p\gamma^p,K)$ then there exist some $h(X)\in F[X]$ with
$$0\in ev(h,\gamma(a_0+a_1\gamma+...+a_p\gamma^p),K)=ev(h,a_0\gamma+a_1\gamma^2+...+a_p\gamma^{p+1}),K).$$

Now suppose without loss of generality that $a_{p+1}\ne0$. Then by induction hypothesis, there exist $d_0,...,d_m\in F$ with
$$0\in d_0+d_1(a_1+a_2\gamma+...+a_{p+1}\gamma^{p})+...+d_m(a_1+a_2\gamma+...+a_{p+1}\gamma^{p})^m,$$
and by above considerations, there exists $e_0,...,e_s\in F$ with
$$0\in e_0+e_1(a_1\gamma+a_2\gamma^2+...+a_{p+1}\gamma^{p+1})+...+e_s(a_1\gamma+a_2\gamma^2+...+a_{p+1}\gamma^{p+1})^s.$$
Using Lemma \ref{lemfator3} we have
\begin{align*}
    &e_s(a_1\gamma+a_2\gamma^2+...+a_{p+1}\gamma^{p+1})^s\\
    &\subseteq
    e_s\left[(a_0+(a_1\gamma+a_2\gamma^2+...+a_{p+1}\gamma^{p+1}))^s
    -\sum^{s-1}_{j=0}\binom{s}{j}a_0^j(a_1\gamma+a_2\gamma^2+...+a_{p+1}\gamma^{p+1})^{s-j}\right] \\
    &\subseteq
    -\sum^{s-1}_{j=0}\binom{s}{j}e_sa_0^j(a_1\gamma+a_2\gamma^2+...+a_{p+1}\gamma^{p+1})^{s-j}+
    e_s(a_0+a_1\gamma+a_2\gamma^2+...+a_{p+1}\gamma^{p+1})^s,
\end{align*}
we conclude
\begin{align*}
 0&\in e_0+e_1(a_1\gamma+a_2\gamma^2+...+a_{p+1}\gamma^{p+1})+...+e_s(a_1\gamma+a_2\gamma^2+...+a_{p+1}\gamma^{p+1})^s\Rightarrow \\
 0&\in e_0+e_1(a_1\gamma+a_2\gamma^2+...+a_{p+1}\gamma^{p+1})+...+\\
 &-\sum^{s-1}_{j=0}\binom{n}{j}e_sa_0^j(a_1\gamma+a_2\gamma^2+...+a_{p+1}\gamma^{p+1})^{s-j}+
    e_s(a_0+a_1\gamma+a_2\gamma^2+...+a_{p+1}\gamma^{p+1})^s\Rightarrow \\
    0&\in\left[e_0-\binom{s}{0}e_sa^{s}\right]+\left[e_1-\binom{s}{1}e_na^{s-1}\right](a_1\gamma+a_2\gamma^2+...+a_{p+1}\gamma^{p+1})+...+ \\
&+\left[e_{s-1}-\binom{s}{s-1}e_na_0\right](a_1\gamma+a_2\gamma^2+...+a_{p+1}\gamma^{p+1})^{n-1}+
e_s(a_0+a_1\gamma+a_2\gamma^2+...+a_{p+1}\gamma^{p+1})^s.
\end{align*}
Then
\begin{align*}
    0&\in f_0+f_1\gamma+...+f_{s-1}\gamma^{s-1}+e_s(a_0+a_1\gamma+a_2\gamma^2+...+a_{p+1}\gamma^{p+1})^s\mbox{ for some }\\
    f_j&\in e_{s-1}-\binom{s}{s-1}e_na_0,\,j=0,...,n-1
\end{align*}
Repeating this process $n-1$ times more, we arrive at an expression
$$0\in z_0+z_1(a_0+a_1\gamma+a_2\gamma^2+...+a_{p+1}\gamma^{p+1})+...+z_s(a_0+a_1\gamma+a_2\gamma^2+...+a_{p+1}\gamma^{p+1})^s$$
for some $z_s\in F$, $j=0,...,n$. Then $0\in ev(g,a_0+a_1\gamma+a_2\gamma^2+...+a_{p+1}\gamma^{p+1})$ for the polynomial
$$g(X)=z_0+z_1X+...+z_{s-1}X^{s-1}+z_sX^s.$$
\end{proof}

In the sequence, we have a key result, which states that our "best candidate for simple extension", $F(p)$, is an full algebraic and almost full extension of $F$.

\begin{teo}\label{teohell2}
Let $F$ be a superfield and $p\in F[X]$ be an irreducible polynomial. Then $F(p)$ is a $F$-almost full extension and $F(p)|_fF$ is algebraic. In particular, for all $p\in F[X]$ there exist an algebraic strong extension $K|_fF$ such that $p$ has a root in $K$.
\end{teo}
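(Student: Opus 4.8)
The plan is to take $K:=F(p)=F[X]/\langle p\rangle$, write $\lambda:=\overline X$ and $n+1:=\deg p$, and verify the assertions in turn. That the canonical map $F\hookrightarrow F(p)$, $a\mapsto\overline a$, is a \emph{full} embedding was essentially observed in the proof of Theorem \ref{root1}: sums and products of constant polynomials remain constant, so $\overline{a+b}=\overline a+\overline b$ and $\overline{a\cdot b}=\overline a\cdot\overline b$, even though $F[X]\twoheadrightarrow F(p)$ itself need not be full. By Remark \ref{rem1}(d) the set $\{1,\lambda,\dots,\lambda^n\}$ is an $F$-basis of $F(p)$, hence $F(p)$ is $F$-generated by it, as Definition \ref{almostfull} requires.

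For the almost-full identity, fix $a,b,c\in F$ and pairwise distinct $i,j,k\in\mathbb N$; by Lemma \ref{lembasic1}(a) we may assume $i<j<k$. Applying Lemma \ref{lemfator2}(ii) twice with $f=\lambda$ splits $(a\lambda^i+b\lambda^j+c\lambda^k)\lambda$ as $(a\lambda^i)\lambda+(b\lambda^j)\lambda+(c\lambda^k)\lambda$; since the product of a monomial with $\lambda$ is again a single monomial (the image in $F(p)$ of the relations in Lemma \ref{lemperm}(a)--(b)), this equals $a\lambda^{i+1}+b\lambda^{j+1}+c\lambda^{k+1}$. Hence $F(p)$ is $F$-almost full relative to $\lambda$, which together with fullness of $F\hookrightarrow F(p)$ gives the first claim.

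For algebraicity, recall from Equation \ref{tret} that every element of $F(p)$ has the form $\overline{a_0}+\overline{a_1}\lambda+\dots+\overline{a_n}\lambda^n$ with $a_0,\dots,a_n\in F$. Because $\lambda$ is a root of the irreducible polynomial $p$, the minimal polynomial $\mbox{Irr}_F(\lambda,F(p))$ exists and has degree $\ge1$. Theorem \ref{teohell1} now applies — its hypotheses (a full extension $F$-generated by $\{1,\lambda,\dots,\lambda^n\}$ and almost full relative to $\lambda$) were just verified — and yields, for each such element, a $g\in F[X]$ with $0\in ev(g,\overline{a_0}+\dots+\overline{a_n}\lambda^n,F(p))$. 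One must only note that the $g$ produced by that argument can be taken non-constant (its leading coefficient descends from the leading coefficient of $\mbox{Irr}_F(\lambda,F(p))$ through products of nonzero elements, which remain nonzero since $F(p)$ is a superdomain); then every element of $F(p)$ is genuinely $F(p)$-algebraic over $F$, i.e.\ $F(p)|_fF$ is algebraic.

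For the final (``in particular'') clause, let $p$ be arbitrary of degree $\ge1$. Exactly as in the proof of Theorem \ref{root1}, reduce to the case $p$ irreducible by passing to an irreducible divisor $q$ of $p$ (a divisor of least positive degree is irreducible, by the classical argument transcribed to $F[X]$); then $K:=F(q)$ is, by the first part, a full algebraic extension of $F$ that is almost full relative to $\overline X$, and the root $\overline X\in K$ of $q$ is also a root of $p$, since $p\in\langle q\rangle$ (combined with the facts of Lemma \ref{factstrong2}) forces $0\in ev(p,\overline X,K)$. I expect the main difficulty to be the algebraicity step — invoking Theorem \ref{teohell1} correctly and extracting from it an honest (positive-degree) witnessing polynomial — with the degree bookkeeping needed to isolate an irreducible divisor a secondary concern, delicate precisely because $F$ is not assumed full, so $F[X]$ need not be a principal ideal domain.
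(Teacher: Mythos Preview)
Your proof is correct and follows essentially the same route as the paper: verify that $F(p)$ is $F$-almost full relative to $\lambda=\overline{X}$ (the paper simply says ``the operations in $F(p)$ are inherited from $F[X]$'', while you make this explicit via Lemma~\ref{lemfator2}), and then invoke Theorem~\ref{teohell1} to obtain algebraicity. One point the paper states more explicitly and which your argument uses only implicitly (through Equation~\ref{tret}) is that for $a_0,\dots,a_{n}\in F$ the set $\overline{a_0}+\overline{a_1}\lambda+\dots+\overline{a_n}\lambda^{n}$ is a \emph{singleton}; this is what lets the conclusion $0\in ev(g,(a_0+a_1\lambda+\dots+a_n\lambda^n),F(p))$ of Theorem~\ref{teohell1} --- which in general only guarantees a root \emph{somewhere} in the evaluation set --- actually pin down the specific element $\sigma$ as algebraic. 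Conversely, you supply two details the paper leaves tacit: that the witnessing $g$ has positive degree, and the reduction to an irreducible divisor for the ``in particular'' clause.
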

\begin{proof}
Let $\omega=\overline{X}\in F(p)$. Then $F(p)$ is generated by $\{1,\omega,...,\omega^{n-1}\}$, with $n=\deg(p)$ and since the operations in $F(p)$ are inherited from $F[X]$, we get that $F(p)$ is almost full. By construction, for all $a_0,...,a_{n-1}\in F$ we have
$$[a_0+a_1X...a_{n-1}X^{n-1}]=a_0+a_1\omega+...+a_{n-1}\omega^{n-1},$$
in other words, is for all $a_0,...,a_{n-1}\in F$ the set $a_0+a_1\omega+...+a_{n-1}\omega^{n-1}$ is unitary.

Now let $\sigma\in F(p)$, $\sigma=a_0+a_1\omega+...+a_{n-1}\omega^{n-1}$.
Using Theorem \ref{teohell1} there is some $g\in F[X]$ such that
$$0\in ev(g,(a_0+a_1\omega+...+a_{n-1}\omega^{n-1}),F(p))=ev(g,\sigma,F(p)).$$
Then $\sigma$ is algebraic $F(p)$-algebraic over $F$, completing the proof.
\end{proof}

Keeping on hands the Theorem \ref{teohell2}, we work in order to legitimate $F(p)$ as the simple extension of $F$ by $\alpha$. But before we do that, lets make some considerations about general almost full extensions.

The proof of Theorem \ref{teohell2} strongly rely in the fact that $a_0+a_1\omega+...+a_{n-1}\omega^{n-1}$ is unitary. It is a special property of $F(p)$, and is not necessarily valid for a general almost full full extension.

For an almost full extension $K|_fF$ denote
$$\mbox{Alg}(K,F)=\{\alpha\in K:\alpha\mbox{ is algebraic over }F\}.$$
We do not know if $\mbox{Alg}(K,F)$ is a superfield in general. The difficult here is that despite the fact that Theorem \ref{teohell2} is still available, we cannot use it to conclude that all elements in $\alpha\beta$ and $\alpha+\beta$ are algebraic if $\alpha$ and $\beta$ are algebraic.

It is time to define a notion of simple extension.
\begin{defn}[Simple Extension]
Let $K|_fF$ be a full extension and $\alpha\in K$ algebraic. We define the \textbf{simple extension} $F(\alpha,K)$ by
$$F(\alpha,K):=\bigcap\{L:L|_fF\mbox{ is full and }F[\alpha]\subseteq L\}.$$
Note that we have a full extension $F(\alpha,K)|_fF$. If $\lambda_1,...,\lambda_n\in K$ are algebraic, we define
$$F(\lambda_1,...,\lambda_n,K):=F(\lambda_1,...,\lambda_{n-1},K)(\lambda_n,K).$$
By Theorem \ref{unicityext} we can simply write $F(\alpha)$ to indicate $F(\alpha,K)$
\end{defn}

\begin{teo}
$ $
\begin{enumerate}[i -]
    \item Let $K|_fF$ be a full extension with $\alpha\in K$ algebraic. Let $p(X)=\mbox{Irr}_F(\alpha,K)$. Then $F(\alpha)\cong F(p)$.
    
    \item Let $K|_fF$ be a full extension and $\alpha,\beta\in K$ algebraic such that $F(\alpha)(\beta)|_fF(\alpha)$ and $F(\beta)(\alpha)|_fF(\beta)$ are almost full extensions relative to $\alpha$ and $\beta$ respectively. Then
    $$F(\alpha)(\beta)\cong F(\beta)(\alpha).$$
    
    \item Let $K|_fF$ be a full extension. For all $\alpha_1,...,\alpha_n\in K$ and all $\sigma\in S_n$ we have
    $$F(\alpha_1,...,\alpha_n)\cong F(\alpha_{\sigma(1)},...,\alpha_{\sigma(n)}).$$
\end{enumerate}
\end{teo}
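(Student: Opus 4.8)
\noindent The plan is to reduce all three parts to a single \emph{minimality} description of the iterated simple extensions inside $K$, together with the structure of $F(p)$ already recorded in Theorem~\ref{teohell2}. First I would prove a minimality lemma: \emph{for algebraic $\lambda_1,\dots,\lambda_n\in K$, the superfield $F(\lambda_1,\dots,\lambda_n,K)$ is the smallest sub-superfield $M\subseteq K$ such that $F\hookrightarrow M$ is full and $\lambda_1,\dots,\lambda_n\in M$.} This is an induction on $n$ unwinding the definitions. On one side, $F(\lambda_1,\dots,\lambda_n,K)$ is such an $M$: each floor of the tower is a full extension of the previous sub-superfield, full embeddings compose, every floor sits inside $K$, and the $k$-th floor contains $\lambda_k=ev(X,\lambda_k,K)$. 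On the other side, if $M$ is any full sub-extension of $F$ inside $K$ containing all the $\lambda_i$, then fullness of $M\hookrightarrow K$ forces $ev(f,\lambda_k,K)=ev(f,\lambda_k,M)\subseteq M$ for every polynomial $f$ over each intermediate field, so at every floor $M$ lies among the sub-superfields over which the intersection defining the next simple extension is taken; induction then gives $F(\lambda_1,\dots,\lambda_n,K)\subseteq M$. (The step ``$A\subseteq M\subseteq K$ with $A\hookrightarrow K$ and $M\hookrightarrow K$ full implies $A\hookrightarrow M$ full'' is exactly Lemma~\ref{factstrong2}(e).)

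Granting this lemma, parts (iii) and (ii) are immediate. The minimality description of $F(\alpha_1,\dots,\alpha_n,K)$ does not mention the ordering of the $\alpha_i$, so for every $\sigma\in S_n$ we get $F(\alpha_1,\dots,\alpha_n,K)=F(\alpha_{\sigma(1)},\dots,\alpha_{\sigma(n)},K)$ as subsets of $K$ (in particular they are isomorphic), which is (iii); and (ii) is the case $n=2$, $\sigma=(1\ 2)$. The additional almost-full hypotheses in (ii) are not needed for this bare isomorphism: they serve to identify $F(\alpha)(\beta)$ with the \emph{computable} two-step quotient, through Theorems~\ref{teohell1}--\ref{teohell2} and Lemma~\ref{almostfact}(ii) applied on the upper floor.

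For (i), put $n=\deg p$ and $\omega=\overline X\in F(p)$. By Theorem~\ref{teohell2}, $F(p)|_fF$ is a full, algebraic, $F$-almost full extension relative to $\omega$, and by construction $\omega$ is a root of $p$; since every ``degree $<n$'' class of $F(p)$ is a singleton (Theorem~\ref{teohell2}) while the nonzero elements of $\langle p\rangle$ have degree $\ge n$ (Lemma~\ref{degreelemma}), one obtains $\mathrm{Irr}_F(\omega,F(p))=p$ and, by Lemma~\ref{almostfact}(i), $F(p)=F[\omega,F(p)]=F(\omega,F(p))$. The plan is then to construct a full embedding $\Phi:F(p)\hookrightarrow K$ fixing $F$ with $\Phi(\omega)=\alpha$: on the unique degree-$<n$ representative $g=b_0+\dots+b_{n-1}X^{n-1}$ of a class, $\Phi([g])$ is the element of $ev(g,\alpha,K)$ obtained by reading the expression $b_0+b_1\alpha+\dots+b_{n-1}\alpha^{n-1}$ inside $K$, mirroring the identity $[g]=b_0+b_1\omega+\dots+b_{n-1}\omega^{n-1}$ in $F(p)$. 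Once $\Phi$ is known to be a well-defined full embedding we are done: $\Phi(F(p))$ is a superfield and a full sub-extension of $F$ inside $K$ that contains $\alpha$, hence contains $F(\alpha,K)$ by minimality; conversely $\Phi([g])\in ev(g,\alpha,K)\subseteq F[\alpha,K]\subseteq F(\alpha,K)$, so $\Phi(F(p))\subseteq F(\alpha,K)$, whence $\Phi$ is an isomorphism $F(p)\xrightarrow{\sim}F(\alpha,K)$.

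The step I expect to be the real work — and the main obstacle — is establishing that $\Phi$ is well defined and full. Single-valuedness of $g\mapsto ev(g,\alpha,K)$ on degree-$<n$ polynomials, and injectivity of $\Phi$, should follow from the minimality built into $\mathrm{Irr}_F(\alpha,K)$: if some nonzero polynomial of degree $<n$ had $0$ among its values at $\alpha$, then, after passing to a suitable irreducible polynomial and using fullness of $F\hookrightarrow K$, $\alpha$ would possess an annihilating irreducible polynomial of degree $<n=\deg\mathrm{Irr}_F(\alpha,K)$, a contradiction — and this extraction of an irreducible annihilator (delicate when $F$ itself need not be a full superring, so that $F[X]$ need not be a PID) is the genuinely delicate point. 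Compatibility of $\Phi$ with $+,\cdot,-$ should come from the rewriting identities of $F(p)$ (Lemma~\ref{lemfator2}) together with fullness of $F\hookrightarrow K$ (Lemma~\ref{factstrong2}). With $\Phi$ in hand, everything else is the minimality bookkeeping above.
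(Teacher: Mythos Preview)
Your approach is essentially the paper's, made more explicit: both argue via minimality of the simple extension inside $K$ and both identify $F(\alpha)$ with $F(p)$ through the generating set $\{1,\alpha,\dots,\alpha^{n-1}\}$. The paper's proof of (i) is terse --- it simply asserts that ``$F(p)|_fF$ is a full extension containing $F[\alpha,K]$'' and that ``$F[\alpha]$ is already a superfield'', without constructing the embedding --- so your explicit $\Phi:F(p)\to K$ and your honest flagging of its well-definedness (single-valuedness of $g\mapsto ev(g,\alpha,K)$ for $\deg g<n$, and extraction of an irreducible annihilator when $F$ is not assumed full) are genuine gains in rigor rather than a different strategy; the paper does not address this point at all. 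For (ii) and (iii) the paper likewise argues by mutual containment (``$ev(p,\alpha,\cdot)\subseteq F(\beta)(\alpha)$, hence $F(\alpha)(\beta)\subseteq F(\beta)(\alpha)$, and symmetrically'') and then induction, which is exactly your minimality lemma unpacked; your remark that the almost-full hypotheses in (ii) are not used for the bare isomorphism is consistent with the paper's own two-line argument there.
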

\begin{proof}
$ $
\begin{enumerate}[i -]
    \item  We have that $F(p)|_fF$ is a full extension containing $F[\alpha,K]$ (see Theorem \ref{root1}), so $F(\alpha)\subseteq F(p)$. Moreover, $F(p)$ is generated by $\{1,\alpha,...,\alpha^{n-1}\}$, where $n=\deg(p)$. Then $F[\alpha]=F[\{1,\alpha,...,\alpha^{n-1}\}]$ already is a superfield and
    $$F(p)\cong F[\{1,\alpha,...,\alpha^{n-1}\}]=F(\alpha).$$
    
    \item By construction, $ev(p,\alpha,F(\alpha)[X])\subseteq F(\beta)(\alpha)$ for all $p\in F(\alpha)[X]$. Then $F(\alpha)(\beta)\subseteq F(\beta)(\alpha)$. Reverting the argument we conclude $F(\beta)(\alpha)\subseteq F(\alpha)(\beta)$.
    
    \item Just use previous item and induction.
\end{enumerate}
\end{proof}

\begin{cor}
Let $K|_fF$ be a full extension with $\alpha\in K$ algebraic and $\deg(\mbox{Irr}_F(\alpha))=n$. Then
$$F(\alpha)\cong\{a_0+a_1\alpha+...+a_n\alpha^n:a_0,...,a_n\in F\},$$
with operations in the set on the right inherited from $F[X]$.
\end{cor}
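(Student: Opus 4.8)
The plan is to derive this from part (i) of the preceding theorem together with Theorem \ref{teohell2}, so that no new computation is really needed. Write $p(X)=\mbox{Irr}_F(\alpha,K)$, so that $\deg p=n$ and, by the very definition of $\mbox{Irr}_F$, $\alpha$ is a root of $p$ in $K$. Part (i) of the theorem above furnishes an isomorphism $\varphi:F(p)\to F(\alpha)$; inspecting its proof, which realizes $F(\alpha)=F[\alpha,K]$ as the image of the evaluation morphism $F(p)\to K$ sending $\overline X$ to $\alpha$, one sees that $\varphi$ is the identity on $F$ and sends $\omega:=\overline X\in F(p)$ to $\alpha$. Since an isomorphism of superrings commutes with the multivalued sum and product (and, by Lemma \ref{factstrong2}, with finite iterations of them), $\varphi$ identifies the $F$-generating set $\{1,\omega,\dots,\omega^{n-1}\}$ of $F(p)$ (Remark \ref{rem1}(d)) with $\{1,\alpha,\dots,\alpha^{n-1}\}\subseteq K$, and carries each expression $a_0+a_1\omega+\cdots+a_{n-1}\omega^{n-1}$ to the set $a_0+a_1\alpha+\cdots+a_{n-1}\alpha^{n-1}$ — computed in $F(\alpha)$, equivalently in $K$ because $F(\alpha)\hookrightarrow K$ is full.

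Next I would invoke Theorem \ref{teohell2}: $F(p)$ is $F$-almost full and, crucially, for all $a_0,\dots,a_{n-1}\in F$ the set $a_0+a_1\omega+\cdots+a_{n-1}\omega^{n-1}$ is a singleton. Hence, as a plain set,
$$F(p)=\{\,a_0+a_1\omega+\cdots+a_{n-1}\omega^{n-1}:a_0,\dots,a_{n-1}\in F\,\},$$
and transporting along $\varphi$ yields
$$F(\alpha)=\{\,a_0+a_1\alpha+\cdots+a_{n-1}\alpha^{n-1}:a_0,\dots,a_{n-1}\in F\,\}\subseteq K,$$
with the multivalued operations read off from $F[X]$ by evaluation at $\alpha$ (using Lemma \ref{lemfator3} and the almost-full identities to reorganize the relevant finite sums and products).

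It then remains only to check the displayed formula, which permits the extra term $a_n\alpha^n$. Taking $a_n=0$ gives one inclusion; for the other, $F(\alpha)$ is a superfield containing $F\cup\{\alpha\}$ and is therefore closed under the pertinent sums and products, so $a_0+a_1\alpha+\cdots+a_n\alpha^n\subseteq F(\alpha)$ for every choice of $a_0,\dots,a_n\in F$. Combined with the previous paragraph this gives the asserted equality of underlying sets, and since in both descriptions the operations are those inherited from $F[X]$, the equality is one of superfields.

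I expect the only genuine subtlety to be the bookkeeping at the interface of three objects — the simple extension $F(\alpha)\subseteq K$, its abstract model $F(p)$, and the ambient $K$ — namely checking that the isomorphism of part (i) fixes $F$ pointwise and sends $\omega$ to $\alpha$, and that fullness of $F(\alpha)\hookrightarrow K$ lets one evaluate all these polynomial expressions inside $K$ without altering their value-sets. Granting this, the statement reduces to the singleton property of Theorem \ref{teohell2} and routine manipulation of finite sums and products (Lemmas \ref{lembasic1} and \ref{lemfator3}).
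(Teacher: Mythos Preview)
Your proposal is correct and follows the route the paper implicitly intends: the corollary is stated without proof, immediately after the theorem whose part (i) gives $F(\alpha)\cong F(p)$, so the expected argument is exactly to transport the description of $F(p)$ (Remark \ref{rem1}(d) and the singleton property noted in the proof of Theorem \ref{teohell2}) along that isomorphism. Your treatment of the extra $a_n\alpha^n$ term and your care about fullness of $F(\alpha)\hookrightarrow K$ are more explicit than anything the paper writes down, but they are the natural bookkeeping steps and introduce no new ideas beyond the preceding theorem.
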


Of course, deal with $F(p)$ is much easier to deal with the general expression
$$\bigcap\{L:L|_fF\mbox{ is full and }F[\alpha]\subseteq L\}$$
in the sense of make calculations. But the task of determining $F(p)$ "by hand" was already difficult in the field case. In the superfield case this difficult is accentuate, even for low degree polynomials.

\begin{ex}[Quadratic Extensions of $H_3$]\label{ext1ex}
Of course, the only irreducible polynomial of degree 2 over $H_2$ is $f(X)=X^2+2$. We want to describe some possibilities for $H_3(\sqrt{2},K)$ (even in the case of non full extensions).

We first use Theorem \ref{root1}. Let $\mbox{Irr}_{H_3}(\sqrt{2})=p(X)=X^2+2$ and consider $K=H_3(p)$. Lets look closely at the operations on $K$. Denote an element in $K$ by $[f]\in K$, $f\in H_3[X]$. We have
$$K=\{[0],[1],[2],[X],[2X],[1+X],[2+X], [1+2X], [2+2X]\}.$$
By definition, for $[f],[g]\in K$ we have
$$[f]+[g]:=\{[h]:h\in f+g\}\mbox{ and }[f]\cdot[g]:=\{[h]:h\in fg\}.$$
With these rules is easy to show that $K|H_3$ is an algebraic full extension (for example, $[1+X]$ is a root of $f(X)=X^2+1$). In fact, $K=H_3(\sqrt2)$. Moreover $K$ is not a hyperfield because
$$([1+X])([1+X])=\dot K.$$

Now le $L=H_3\times_hH_5$. Note that $|L|=(3-1)(5-1)+1=2\cdot4+1=9$. Moreover, we have a morphism $i:H_3\hookrightarrow H_5$ given by the rule $i(x)=(1,x^2)$. Denoting $\omega=(1,2)$, we have
$$\omega^2=(1,2)^2=(1,2)\cdot(1,2)=(1,2^2)=(1,4)=i(2).$$
More explicitly, doing the following identifications
\begin{align*}
    (1,1)\mapsto1,&\qquad\qquad(2,1)\mapsto a,\\
    (1,2)\mapsto\omega,&\qquad\qquad(2,2)\mapsto b,\\
    (1,3)\mapsto2\omega,&\qquad\qquad(2,3)\mapsto c,\\
    (1,4)\mapsto2,&\qquad\qquad(2,4)\mapsto d,
\end{align*}
we have that
$$L\cong\{0,1,2,\omega,2\omega,a,b,c,d\}$$
with the following table of operations:
\begin{center}
    \begin{tabular}{|c||c|c|c|c|c|c|c|}
    \hline
   $+$ & $\omega$ & $2\omega$ & $a$ & $b$ & $c$ & $d$ \\
    \hline
    \hline
    $1$ & $\{1,\omega,a,b\}$ & $\{1,2\omega,a,c\}$ & $K\setminus\{0\}$ & $\{1,\omega,a,b\}$ & $\{1,2\omega,a,c\}$ & $\{1,2,a,d\}$ \\
    \hline
    $2$ & $\{2,\omega,b,d\}$ & $\{2,2\omega,c,d\}$ & $\{1,2,a,d\}$ & $\{2,\omega,b,d\}$ & $\{2,2\omega,c,d\}$ & $K\setminus\{0\}$ \\
    \hline
    $\omega$ & $K$ & $\{\omega,2\omega,b,c\}$ & $\{1,\omega,a,b\}$ & $K\setminus\{0\}$ & $\{\omega,2\omega,b,c\}$ & $\{2,\omega,b,d\}$ \\
    \hline
    $2\omega$ & $\{\omega,2\omega,b,c\}$ & $K$ & $\{1,2\omega,a,c\}$ & $\{\omega,2\omega,b,c\}$ & $K\setminus\{0\}$ & $\{2,2\omega,c,d\}$ \\
    \hline
    $a$ & $\{1,\omega,a,b\}$ & $\{1,2\omega,a,c\}$ & $K$ & $\{1,\omega,a,b\}$ & $\{1,2\omega,a,c\}$ & $\{1,2,a,d\}$ \\
    \hline
    $b$ & $K\setminus\{0\}$ & $\{\omega,2\omega,b,c\}$ &  $\{1,\omega,a,b\}$ & $K$ & $\{\omega,2\omega,b,c\}$ & $\{2,\omega,b,d\}$ \\
    \hline
    $c$ & $\{\omega,2\omega,b,c\}$ & $K\setminus\{0\}$ & $\{1,2\omega,a,c\}$ & $\{\omega,2\omega,b,c\}$ & $K$ & $\{2,2\omega,c,d\}$ \\
    \hline
    $d$ & $\{2,\omega,b,d\}$ & $\{2,2\omega,c,d\}$ & $\{1,2,a,d\}$ & $\{2,\omega,b,d\}$ & $\{2,2\omega,c,d\}$ & $K$ \\
    \hline
\end{tabular}
\end{center}

\begin{center}
    \begin{tabular}{|c||c|c|c|c|c|c|c|c|c|}
    \hline
   $\cdot$ & $2$ & $\omega$ & $2\omega$ & $a$ & $b$ & $c$ & $d$ \\
    \hline
    \hline
    $2$ & $1$ & $2\omega$ & $\omega$ & $d$ & $c$ & $b$ & $a$ \\
    \hline
    $\omega$ & $2\omega$ & $2$ & $1$ & $b$ & $d$ & $a$ & $c$ \\
    \hline
    $2\omega$ & $\omega$ & $1$ & $2$ & $c$ & $a$ & $d$ & $b$ \\
    \hline
    $a$ & $d$ & $b$ & $c$ & $1$ & $\omega$ & $2\omega$ & $2$ \\
    \hline
    $b$ & $c$ & $d$ & $a$ & $\omega$ & $2$ & $1$ & $2\omega$ \\
    \hline
    $c$  & $b$ & $a$ & $d$ & $2\omega$ & $1$ & $2$ & $\omega$ \\
    \hline
    $d$  & $a$ & $c$ & $b$ & $2$ & $2\omega$ & $\omega$ & $1$ \\
    \hline
\end{tabular}
\end{center}
and of course, $1+1=2+2=L$, $0+x=\{x\}$, $1\cdot x=x$ and $0\cdot x=0$ for all $x\in L$. With these calculations we immediately have that $L$ is an algebraic extension of $H_3$. 

Now Let $q$ be an odd prime integer greater than 3. The same calculations (with $\omega=(1,2)$) proves that $H_3\times_hH_q$ is another algebraic extension of $H_3$. Of course, we clearly have $H_3\times_hH_5\ncong H_3\times_hH_q$ for $g\ge7$. And since all these $H_3\times_hH_q$ are hyperfields and $K$ is a superfield that is not a hyperfield we have $K\ncong H_3\times_hH_q$ for all prime $q\ge5$. Conclusion: we have infinite non isomorphic algebraic (and non full) hyperfield extensions of $H_3$.
\end{ex}

\section{Algebraic Closure}

As expected, there are some generalizations to the classic notion of algebraic closure for fields.

 \begin{defn}[Algebraic Closures]\label{extension}
Let $F$ and $K$ be superfields.
\begin{enumerate}[i -]
    \item We say that $K$ is a \textbf{proto algebraic closure} of $F$ if $K$ is algebraically closed and $K|_pF$ is algebraic.
    \item We say that $K$ is an \textbf{algebraic closure} of $F$ if $K$ is algebraically closed and $K|F$ is algebraic.
    \item We say that $K$ is a \textbf{full algebraic closure} of $F$ if $K$ is algebraically closed and $K|_fF$ is algebraic.
\end{enumerate}
Of course, all these notions coincide if we choose a field $F$.
\end{defn}

\begin{lem}\label{lemuinq1}
 Let $F$ be a superfield and $K|_fF$ be an algebraic extension. If $K$ is a full algebraic closure of $F$ then $K|_fF$ is a maximal full algebraic extension.
\end{lem}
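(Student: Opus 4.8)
The plan is to unwind the word ``maximal'': it means that there is no full superfield extension $L|_fK$ with $L\neq K$ for which $L|_fF$ stays algebraic. So I would fix an arbitrary full extension $L|_fK$ with $L|_fF$ algebraic and prove $L=K$. Since $F\subseteq K$ and $F\hookrightarrow K$ is full, a polynomial over $F$ witnessing algebraicity of an element over $F$ is a polynomial over $K$ witnessing it over $K$; hence $L|_fK$ is algebraic, and it suffices to show that an arbitrary $\alpha\in L$ already lies in $K$.

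Fix $\alpha\in L$ and let $p(X)=\mathrm{Irr}_K(\alpha,L)\in K[X]$, an irreducible polynomial with $0\in ev(p,\alpha,L)$; by the simple extension theory (the theorem giving $F(\alpha)\cong F(p)$) there is a $K$-isomorphism $K(\alpha,L)\cong K(p)=K[X]/\langle p\rangle$ carrying $\alpha$ to $\overline X$. Now use that $K$ is algebraically closed: $p$ has a root $\gamma\in K$, i.e. $0\in ev(p,\gamma,K)$. The key step is to upgrade this to $p\in\langle X-\gamma\rangle$. In the field case this is immediate, but over a superfield a root need not be an effective root and products are multivalued, so one cannot simply ``divide out'' $X-\gamma$. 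To see $p\in\langle X-\gamma\rangle$ one may assume $\langle X-\gamma\rangle\subsetneq K[X]$ (otherwise it is obvious); then $\langle X-\gamma\rangle$ contains no nonzero constant (else it would contain $1$), so the map $\phi\colon K\to K[X]/\langle X-\gamma\rangle$, $a\mapsto[a]$, is a full isomorphism (full because the quotient map is full by definition of the quotient operations and $K\hookrightarrow K[X]$ is full by Lemma \ref{lemperm}(g); bijective by Euclid's division, Theorem \ref{euclid}, together with the absence of nonzero constants) and it carries $\gamma$ to $\overline X$ since $X-\gamma\in\langle X-\gamma\rangle$. By Lemma \ref{factstrong2}(d), $\phi$ maps the set $ev(p,\gamma,K)$ onto $ev(p,\overline X,K[X]/\langle X-\gamma\rangle)$, and the latter is the \emph{singleton} $\{[p]\}$ (fullness of the quotient map together with Lemma \ref{lemperm}(d), since the monomials $p_iX^i$ sit in distinct degrees). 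Hence $[0]=\phi(0)\in\{[p]\}$, that is, $p\in\langle X-\gamma\rangle$.

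With this the rest is formal. Since $p$ is irreducible and the non-constant $X-\gamma$ divides it, the definition of irreducibility gives $\langle p\rangle=\langle X-\gamma\rangle$; since $K[X]/\langle p\rangle$ is a nontrivial superfield (Theorem \ref{lemquadext}), $\langle p\rangle$ is proper, hence $\langle X-\gamma\rangle\neq K[X]$, so $\phi$ above is genuinely an isomorphism and $K(p)=K[X]/\langle p\rangle=K[X]/\langle X-\gamma\rangle\cong K$. Composing with $K(\alpha,L)\cong K(p)$ yields $\psi\colon K(\alpha,L)\xrightarrow{\sim}K$ that restricts to the identity on $K\subseteq K(\alpha,L)$. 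For any $x\in K(\alpha,L)$ we then have $\psi(\psi(x))=\psi(x)$, because $\psi(x)\in K$, so $x=\psi(x)\in K$ by injectivity of $\psi$. Therefore $K(\alpha,L)=K$, whence $\alpha\in K$; as $\alpha\in L$ was arbitrary, $L=K$, which is the asserted maximality.

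The step I expect to be the main obstacle is exactly ``$0\in ev(p,\gamma,K)\Rightarrow p\in\langle X-\gamma\rangle$'': the field-theoretic reflex (a root yields a linear factor) is unavailable, and the workaround hinges on the single-valuedness of the evaluation of a \emph{fixed} polynomial at the class of $X$ inside a quotient $K[X]/\langle d\rangle$, a fact that ultimately rests on Lemma \ref{lemperm}(d). A minor side point to verify is that $\langle X-\gamma\rangle$ is a proper ideal here, which is forced a posteriori by $K(p)$ being a genuine (nontrivial) superfield.
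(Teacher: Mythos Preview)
Your proof is correct and considerably more thorough than the paper's. The paper proves the lemma in a single sentence by contrapositive: ``If $K|_fF$ is not maximal, there is a nontrivial full algebraic extension $L|_fK$. In particular, there is a nontrivial simple extension $K(\alpha)|_fK$, then $K$ is not an algebraic closure.'' The last clause is asserted without justification. In the classical field case it is immediate (the minimal polynomial of $\alpha$ would have a root in $K$, hence a linear factor, forcing $\alpha\in K$), but in the superfield setting this is exactly the subtlety you flag: a root need not be an effective root.

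Your argument is the direct form of the same implication, and it actually \emph{does} the work the paper omits. Given $\alpha\in L$ with $p=\mathrm{Irr}_K(\alpha,L)$ and a root $\gamma\in K$ of $p$, you show $p\in\langle X-\gamma\rangle$ by passing through the quotient $K[X]/\langle X-\gamma\rangle$: the composite $K\hookrightarrow K[X]\to K[X]/\langle X-\gamma\rangle$ is full and sends $\gamma$ to $[X]$, and by Lemma~\ref{lemperm}(d) the evaluation $[a_0]+[a_1][X]+\cdots+[a_n][X]^n$ collapses to the singleton $\{[p]\}$. This is the genuine content missing from the paper's one-liner. From there your conclusion $K(\alpha,L)\cong K(p)=K[X]/\langle X-\gamma\rangle\cong K$ over $K$, hence $\alpha\in K$, is clean.

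So the two approaches are not really different strategies---yours is the contrapositive unwound---but yours supplies the argument the paper leaves implicit, and you correctly locate the only non-obvious step. One small dependency to be aware of: you invoke the isomorphism $K(\alpha,L)\cong K(p)$ as a $K$-isomorphism carrying $\alpha\mapsto\overline X$; the paper's proof of that theorem is somewhat sketchy on this point, but it is the intended statement.
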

\begin{proof}
 If $K|_fF$ is not maximal, there is a nontrivial full algebraic extension $L|_fK$. In particular, there is a nontrivial simple extension $K(\alpha)|_fK$, then $K$ is not an algebraic closure.
\end{proof}

Here we achieve the main result of this present paper.

\begin{teo}[Existence of the full Algebraic Closure]\label{algclos}
Let $F$ be a superfield. Then exists a full superfield extension $K|_fF$ such that $K$ is algebraically closed (and then, a full algebraic closure of $F$). Moreover, we can choose $K$ in order that $K|_fF$ is algebraic.
\end{teo}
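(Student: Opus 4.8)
The plan is to realize $K$ as a \emph{maximal full algebraic extension} of $F$, produced by Zorn's Lemma; the one substantial ingredient is a transitivity property for algebraicity, which I would prove first. \textbf{Transitivity.} I claim that if $E|_fF$ and $L|_fE$ are full algebraic extensions, then $L|_fF$ is algebraic. Fix $\alpha\in L$. Since $L|_fE$ is algebraic there is an irreducible $q(X)=b_0+b_1X+\dots+b_mX^m\in E[X]$ with $\alpha$ a root of $q$ in $L$, and each $b_i$ is $E$-algebraic over $F$, hence (as $E|_fF$ is algebraic) algebraic over $F$. Inside $L$ form the finite tower $F\subseteq F(b_0)\subseteq F(b_0,b_1)\subseteq\dots\subseteq F(b_0,\dots,b_m)\subseteq F(b_0,\dots,b_m)(\alpha)=:K_0$; this makes sense because $L|_fF$ is full (a composite of full morphisms), each adjoined element is algebraic over the field below it ($\alpha$ because $q\in F(b_0,\dots,b_m)[X]$), and by Theorem \ref{unicityext} these simple extensions do not depend on the ambient superfield. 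By Theorem \ref{teohell2}, every one-step extension in this tower is full and almost full relative to the element just adjoined, so Lemma \ref{almostfact}(ii), applied $m+1$ times, shows that $K_0|_fF$ is almost full; in particular $K_0$ is $F$-generated by $\{1,\delta,\dots,\delta^N\}$ for a single $\delta\in K_0$ and some $N$. Theorem \ref{teohell1} then guarantees that every element of such a $K_0$ is $K_0$-algebraic over $F$; applying this to $\alpha$ proves the claim.

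\textbf{Existence via Zorn.} Next I would bound cardinalities: every element of a full algebraic extension $E|_fF$ lies in some $F[\gamma,E]\cong F(\mathrm{Irr}_F(\gamma))$, which has cardinality at most $|F|^{\deg\mathrm{Irr}_F(\gamma)}$, and summing over the at most $\mu:=\max(|F|,\aleph_0)$ polynomials of $F[X]$ gives $|E|\le\mu$. Fix a set $\Omega\supseteq F$ with $|\Omega|=\mu$, and let $\mathcal P$ be the poset of all full algebraic extensions of $F$ whose underlying set is contained in $\Omega$, ordered by ``is a full sub-superfield of''. A chain in $\mathcal P$ has the union of its members as an upper bound: that union is a full superfield extension of $F$ (the operations are compatible, and fullness of the inclusion only involves pairs of elements of $F$, present in every member), it is algebraic over $F$ (each element lies in some member), and its underlying set is in $\Omega$. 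By Zorn's Lemma there is a maximal $K\in\mathcal P$. If $K$ were not algebraically closed, choose a non-constant $p\in K[X]$ with no root in $K$; by Theorem \ref{teohell2} there is a full extension $L'|_fK$, algebraic over $K$, containing a root of $p$ (so $L'\neq K$), and by the transitivity step $L'|_fF$ is algebraic, so by the cardinality bound $L'$ is isomorphic over $F$ to a member of $\mathcal P$ strictly above $K$ --- contradiction. Hence $K$ is algebraically closed, and as $K|_fF$ is algebraic by construction, $K$ is a full algebraic closure of $F$ with $K|_fF$ algebraic, as required.

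\textbf{Main obstacle.} The cardinality bound and the Zorn argument are the usual Artin--Zorn routine. The heart of the matter is the transitivity step: over superfields there is no multiplicativity of degrees to lean on, so transitivity of algebraicity has to be pushed through the ``almost full'' machinery (Theorems \ref{teohell1}, \ref{teohell2} and Lemma \ref{almostfact}), the key point being that a finite tower of almost-full extensions is again almost full and therefore generated by the powers of a single element, which is exactly what makes Theorem \ref{teohell1} applicable to \emph{every} element of $K_0$. One must also check carefully that the intermediate subfields constructed inside $L$ are full in $L$ and independent of $L$ (Theorem \ref{unicityext}), so that being a root over $K_0$ genuinely witnesses algebraicity over $F$ inside $L$.
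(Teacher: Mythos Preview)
Your Zorn's-lemma architecture is the same as the paper's, but the transitivity step you single out as ``the heart of the matter'' contains a genuine gap, and it is exactly the gap the paper itself flags as open.  You argue: $K_0|_fF$ is almost full by Lemma~\ref{almostfact}(ii), hence $K_0$ is $F$-generated by $\{1,\delta,\dots,\delta^N\}$ for a single $\delta$, and therefore Theorem~\ref{teohell1} makes every element of $K_0$ algebraic over $F$.  But Theorem~\ref{teohell1} does not say that.  For fixed $a_0,\dots,a_p\in F$ the object $a_0+a_1\delta+\cdots+a_p\delta^p$ is a \emph{subset} of $K_0$, and the conclusion
\[
0\in ev\bigl(g,\,a_0+a_1\delta+\cdots+a_p\delta^p,\,K_0\bigr)=\bigcup_{\sigma\in a_0+a_1\delta+\cdots+a_p\delta^p} ev(g,\sigma,K_0)
\]
only asserts that \emph{some} element of that set is a root of $g$, not that your particular $\alpha$ is.  The paper states this limitation explicitly right after Theorem~\ref{teohell2}: the proof that $F(p)|_fF$ is algebraic ``strongly rely[s] in the fact that $a_0+a_1\omega+\dots+a_{n-1}\omega^{n-1}$ is unitary.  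It is a special property of $F(p)$, and is not necessarily valid for a general almost full extension'', and it goes on to say that whether $\mbox{Alg}(K,F)$ is a superfield for a general almost full $K|_fF$ is unknown.  Your transitivity claim is precisely the statement the authors were unable to prove with the available machinery.

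The paper's proof sidesteps this obstacle rather than resolving it.  Its poset $S$ is built only from extensions of the special shape $F(\omega)\cong F(p)$, where the ``unitary'' property holds and Theorem~\ref{teohell2} applies directly; and in the final contradiction step the root-free polynomial is taken in $F[X]$ (not in $\overline F[X]$), so that adjoining a root yields another $F(\omega)$ without any appeal to transitivity of algebraicity over the intermediate superfield $\overline F$.  Your cardinality bound and your more honest attempt to check closure for polynomials over $K$ itself are natural improvements, but they force you into exactly the transitivity question that the tools of the paper do not settle.
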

\begin{proof}
Let $F$ be a superfield. Consider the following set
$$A:=\{\omega^f_i:f\in F[X],\,\deg(f)\ge1,\,i=1,...,\deg(f)\}.$$
In other words, for each $f$ of degree greater or equal to $1$, we are choosing elements $\omega^f_1,...,\omega^f_{\deg(f)}$ 
to represent "some possible roots for $f$". For each $a\in F$, $a$ is the root of $f_a(X)=X-a$, and hence there is an element 
$\omega^{f_a}_1\in A$. Let
$$\Omega=\left(\mathcal P(A)\setminus\bigcup_{a\in F}\{\omega^{f_a}_1\}\right)\cup F.$$
Then $F\subseteq\Omega$. Now, consider all the possible superfields that can be defined on elements of $\Omega$. Denote the set 
of all such superfields by $\mathcal E$. Since $\mathcal E\subseteq\Omega$, it is in fact a set, and since $F\in\mathcal E$, it is 
a non-empty set.

Let $E|_fF$ be an almost full algebraic extension of $F$-generated by $\{1,\gamma,...,\gamma^n\}$ where $\gamma\in E\setminus F$ is a root of $f$ in $F[X]$. In other words, we have $E=F(\gamma)$. Let $\omega\in\Omega\setminus F$. We can "make the variable change" 
$\gamma\mapsto\omega$ and choose distinct elements for all elements in $F(\gamma)$ in order to get a field $F(\omega)\cong 
F(\gamma)$, such that $F\subseteq F(\omega)\subseteq\Omega$.

Then, for all almost full algebraic extension $E_j\subseteq\Omega$ obtained by the above process, we can take the set
$$S=\{E_j:j\in J\}.$$
We have $F\in S$ and $S$ is partially ordered by inclusion.

Let $T=\{E_{kj}:k\in K\}$ be a chain in $S$ and
$$W=\bigcup_{k\in K}E_{kj}.$$
Since $W$ is an algebraic extension of $F$, we get $W\in S$. By Zorn's Lemma, there exist some maximal element $\overline 
F\in S$. We prove that $\overline F$ is an algebraic closure of $F$.

In fact, suppose that exists $f(X)\in F[X]$ such that $f$ has no roots in $\overline F[X]$. Then, take $\omega\in\Omega$ such 
that $\omega\notin\overline F$ and $\omega$ is a root of $f(X)$. Consider the field $\overline F(\omega)$ as we did above. 
Then $\overline F(\omega)$ is an algebraic extension with $\overline F\subsetneq\overline F(\omega)$, contradicting the 
maximality of $\overline F$, which complete the proof.
\end{proof}

We are surprisingly able to prove the uniqueness of full algebraic closures.

\begin{teo}[Uniqueness of the full Algebraic Closure]\label{uniq}
Let $F$ be a superfield. Let $K_1,K_2$ be two full algebraic closures of $F$. Then $K_1\cong K_2$.
\end{teo}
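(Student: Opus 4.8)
The plan is to run the classical ``one root at a time'' uniqueness argument for algebraic closures of fields, using the full-morphism calculus of Sections~3--6 to handle the multivalued bookkeeping. First I would introduce the poset $\mathcal P$ of triples $(L,M,\sigma)$ where $L$ is a sub-superfield of $K_1$ containing $F$, $M$ is a sub-superfield of $K_2$ containing $F$, and $\sigma\colon L\to M$ is a full isomorphism restricting to the identity on $F$, ordered by extension (that is, $(L,M,\sigma)\le(L',M',\sigma')$ iff $L\subseteq L'$, $M\subseteq M'$, and $\sigma'$ agrees with $\sigma$ on $L$). It is nonempty since $(F,F,\mathrm{id}_F)\in\mathcal P$. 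A useful preliminary remark is that any sub-superfield $L$ with $F\subseteq L\subseteq K_1$ already gives a full extension $L|_fF$, and moreover $K_1|_fL$ is full: indeed $F\hookrightarrow L\hookrightarrow K_1$ are full morphisms (the composite because $K_1$ is a full algebraic closure of $F$, the second by the definition of sub-superfield), so Lemma~\ref{factstrong2}(e) gives fullness of $F\hookrightarrow L$. Chains in $\mathcal P$ have upper bounds given by componentwise unions --- a union of a chain of sub-superfields is a sub-superfield, and since fullness and the superfield axioms constrain only boundedly many elements at a time, the union of the $\sigma$'s is again a full isomorphism --- so Zorn's Lemma yields a maximal element $(\overline L,\overline M,\sigma)$.

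Next I would show $\overline L=K_1$. If not, choose $\alpha\in K_1\setminus\overline L$; since $K_1|_fF$ is algebraic and $F\subseteq\overline L$, $\alpha$ is algebraic over $\overline L$, so put $p=\mathrm{Irr}_{\overline L}(\alpha,K_1)$, an irreducible polynomial of $\overline L[X]$. Applying $\sigma$ to coefficients gives a full isomorphism $\sigma^X\colon\overline L[X]\to\overline M[X]$ sending $p$ to an irreducible $p^\sigma\in\overline M[X]$ and carrying $\langle p\rangle$ onto $\langle p^\sigma\rangle$, hence a full isomorphism $\overline L(p)=\overline L[X]/\langle p\rangle\cong\overline M[X]/\langle p^\sigma\rangle=\overline M(p^\sigma)$ extending $\sigma$. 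Because $K_2$ is algebraically closed, $p^\sigma$ has a root $\beta\in K_2$. Invoking the theorem asserting $F(\alpha)\cong F(p)$ for $p=\mathrm{Irr}_F(\alpha,K)$ (applied over $\overline L$ inside $K_1$ and over $\overline M$ inside $K_2$, legitimate since $K_1|_f\overline L$ and $K_2|_f\overline M$ are full), we obtain full isomorphisms $\overline L(\alpha,K_1)\cong\overline L(p)$ and $\overline M(\beta,K_2)\cong\overline M(p^\sigma)$ over $\overline L$ and $\overline M$ respectively. Composing, $\sigma$ extends to a full isomorphism between the sub-superfields $\overline L(\alpha,K_1)\subseteq K_1$ and $\overline M(\beta,K_2)\subseteq K_2$, a strictly larger triple in $\mathcal P$, contradicting maximality. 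Thus $\overline L=K_1$ and $\sigma\colon K_1\to K_2$ is a full $F$-embedding. Its image $\overline M=\sigma(K_1)$ is then a sub-superfield of $K_2$ (closed under the operations by fullness of $\sigma$), algebraically closed (transport of algebraic closedness along $\sigma$), with $\overline M|_fF$ algebraic; that is, $\overline M$ is a full algebraic closure of $F$ inside $K_2$. By Lemma~\ref{lemuinq1}, $\overline M|_fF$ is a maximal full algebraic extension, while $K_2|_f\overline M$ is a full algebraic extension above it; hence $\overline M=K_2$ and $\sigma$ is an isomorphism $K_1\cong K_2$.

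The step I expect to be the main obstacle is the adjunction step, specifically making everything \emph{full}: one must verify that the coefficientwise map $\sigma^X$ is a full isomorphism of the (non-full) polynomial superrings, that it carries $\langle p\rangle$ exactly onto $\langle p^\sigma\rangle$ so that the quotients are isomorphic \emph{as full extensions, compatibly with $\sigma$}, and --- crucially --- that $\overline L(\alpha,K_1)\cong\overline L(p)$ genuinely holds inside $K_1$, which is exactly where the hypotheses ``$K_1|_fF$ full'' and ``$p$ irreducible'' enter through Theorems~\ref{root1} and \ref{teohell2} and the preceding $F(\alpha)\cong F(p)$ theorem. Everything else --- the chain argument, transport of algebraicity and of algebraic closedness along full isomorphisms, and the final appeal to Lemma~\ref{lemuinq1} for surjectivity --- should be routine once the full-morphism lemmas of Section~3 are available.
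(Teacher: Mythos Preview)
Your argument is correct and follows the classical ``extend an isomorphism one root at a time'' template, but the paper organises the proof differently. Rather than a single Zorn argument on triples $(L,M,\sigma)$, the paper factors the work into two lemmas: Lemma~\ref{uniq1} (any full algebraic extension of $F$ admits an $F$-embedding into any fixed full algebraic closure, proved by Zorn on pairs $(K,i)$) and Lemma~\ref{uniq2} (any $F$-self-embedding of a full algebraic closure is surjective, using Lemma~\ref{lemuinq1}). The main proof is then a Cantor--Schr\"oder--Bernstein style argument: obtain $F$-embeddings $i_1\colon K_1\to K_2$ and $i_2\colon K_2\to K_1$, and conclude that each is an isomorphism because the composites $i_1\circ i_2$ and $i_2\circ i_1$ are $F$-self-embeddings of closures.

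What your route buys is directness: you never need the second embedding $i_2$, and your surjectivity step via Lemma~\ref{lemuinq1} is essentially the content of Lemma~\ref{uniq2} without the detour through self-maps. What the paper's decomposition buys is that Lemma~\ref{uniq1} is stated in reusable form (embeddings of arbitrary algebraic extensions into a closure, not just of one closure into another) and its proof avoids the explicit $F(\alpha)\cong F(p)$ bookkeeping you flag as the delicate point --- it simply observes that $N(\alpha)\hookrightarrow\overline F$ exists and invokes maximality. One small wrinkle in your adjunction step: to invoke the $F(\alpha)\cong F(p)$ theorem on the $K_2$ side you need $p^\sigma=\mathrm{Irr}_{\overline M}(\beta,K_2)$, not merely that $p^\sigma$ is irreducible with root $\beta$; in the superfield setting this is not automatic from the definitions, so you should either argue it or, more simply, bypass it by using the quotient description $\overline M(p^\sigma)$ directly and embedding it into $K_2$ via $\overline X\mapsto\beta$.
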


To prove Theorem \ref{uniq} we need two Lemmas. Let $L|_fF$ be a full superfield extension and $N$ be another superfield. An \textbf{$F$-embedding} is a full embedding $\iota:L\rightarrow N$ such that $\iota(a)=a, a \in F$.

\begin{lem}\label{uniq1}
Let $L|_fF$ be an algebraic full extension and $N|_fL$ another algebraic full extension, and $\overline F$ some full algebraic closure of $F$. There is a $F$-embedding $i:L\rightarrow\overline F$ and once $i$ is picked there exists a $F$-embedding $N\rightarrow\overline F$ extending $i$.
\end{lem}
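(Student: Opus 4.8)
The plan is to follow the classical field-theoretic strategy for extending embeddings into an algebraic closure, adapting each step to the multivalued setting. First I would handle the case of a simple extension $L = F(\alpha)$ with $\alpha$ algebraic over $F$: let $p(X) = \mathrm{Irr}_F(\alpha,L)$, which is irreducible in $F[X]$. Since $\overline F$ is a full algebraic closure of $F$, the polynomial $p$ has a root $\beta \in \overline F$; moreover $\beta$ is $\overline F$-algebraic over $F$ (as $\overline F|_fF$ is algebraic), so $\mathrm{Irr}_F(\beta,\overline F)$ divides (in the sense of $\langle\cdot\rangle$) a polynomial that $\beta$ annihilates, and by irreducibility of $p$ and minimality we may arrange $\mathrm{Irr}_F(\beta,\overline F) = p$ up to the divisibility relation. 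By the Theorem in the excerpt characterizing $F(\alpha)$ (item (i) of the unnamed theorem after the definition of simple extension), $F(\alpha) \cong F(p) \cong F(\beta) \subseteq \overline F$, and this isomorphism fixes $F$ pointwise because it is induced by $\iota: F[X]/\langle p\rangle \to \overline F$, $\overline{g(X)} \mapsto g(\beta)$, which is a full embedding by Lemma \ref{lemperm}(g) together with the almost-full machinery of Theorems \ref{teohell2} and \ref{teohell1}. This gives the desired $F$-embedding $i: L = F(\alpha) \to \overline F$.

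For general algebraic full $L|_fF$, I would run a Zorn's Lemma argument on the poset of pairs $(M, j)$ where $F \subseteq M \subseteq L$ is an intermediate full extension and $j: M \to \overline F$ is an $F$-embedding, ordered by extension. Chains have upper bounds by taking unions (the union of a chain of full extensions is a full extension, as in the proof of Theorem \ref{algclos}, and the union of compatible embeddings is an embedding; fullness of the union-map follows since fullness is checked on pairs of elements, each of which already lies in some member of the chain). A maximal element $(M_0, j_0)$ must have $M_0 = L$: if $\alpha \in L \setminus M_0$, then $\alpha$ is algebraic over $F$ hence over $M_0$, and applying the simple-extension case above over the base $M_0$ (using that $j_0(M_0) \subseteq \overline F$ and that $\overline F$ is a full algebraic closure of $j_0(M_0)$ as well, since $\overline F|_fF$ algebraic and $F \subseteq j_0(M_0)$ forces $\overline F|_f j_0(M_0)$ algebraic) extends $j_0$ to $M_0(\alpha)$, contradicting maximality. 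This produces $i: L \to \overline F$.

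Finally, for the second assertion: given the chosen $i: L \to \overline F$, identify $L$ with $i(L) \subseteq \overline F$, so that $\overline F$ is a full algebraic closure of $i(L) \cong L$ and $N|_fL$ is an algebraic full extension of $i(L)$. Applying the first part with $i(L)$ in place of $F$ and $N$ in place of $L$ yields an $i(L)$-embedding $N \to \overline F$, which is by construction an $F$-embedding extending $i$ (it fixes $i(L)$ pointwise, hence fixes $F$ pointwise and agrees with $i$ on $L$).

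The main obstacle I anticipate is justifying, in the simple-extension step, that the map $\overline{g(X)} \mapsto g(\beta)$ is genuinely a \emph{full} embedding $F(p) \to \overline F$ rather than merely a morphism: one must verify that distinct classes in $F[X]/\langle p\rangle$ go to distinct elements (injectivity, which uses that $\beta$ is a root of the irreducible $p$ and that $F(p)$ is a superfield by Theorem \ref{lemquadext}) and that sums and products are preserved \emph{as sets}, which requires the almost-full structure of $F(p)$ established in Theorem \ref{teohell2} together with Lemma \ref{lemfator2}. The subtlety specific to superfields — absent in the field case — is that $\mathrm{Irr}_F(\beta,\overline F)$ is only unique up to the divisibility relation $\langle\cdot\rangle$, not literally unique, so one must be careful to match it with $p$ using the irreducibility hypothesis before invoking the isomorphism $F(\alpha)\cong F(p)$.
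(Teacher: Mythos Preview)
Your proposal is correct and follows essentially the same strategy as the paper: reduce the second assertion to the first by renaming the base superfield, then run Zorn's Lemma on pairs $(M,j)$ with $F\subseteq M\subseteq L$ and $j:M\to\overline F$ an $F$-embedding, and use a simple-extension step to rule out a proper maximal element. The paper's write-up is terser than yours and does not isolate the simple-extension case beforehand; it simply asserts at the maximality step that ``$N(\alpha)|_fN$ and $\overline F|_fN(\alpha)$,'' whereas you spell this out via $F(\alpha)\cong F(p)\cong F(\beta)\subseteq\overline F$ and, notably, you flag the genuine technical point the paper leaves implicit --- namely, checking that the evaluation map $\overline{g(X)}\mapsto g(\beta)$ from $F(p)$ into $\overline F$ is a \emph{full} embedding, not merely a morphism.
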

\begin{proof}
Since a full embedding $i:L\rightarrow\overline F$ realizes the full algebraically closed $\overline F$ as an algebraic extension of $L$ (and hence as a full algebraic closure of $L$), by renaming the base superfield as $L$ it suffices to just prove the first part: any strong algebraic extension admits a full embedding into a specified full algebraic closure.

Let $\Sigma$ to be the set of pairs $(K,i)$ such that $K|_fF$, $L|_fK$ and the inclusion map $i:K\rightarrow\overline F$ is a $F$-embedding. Of course, $(F,i)\in\Sigma$, and using the partial order defined by
$$(K_1,i_1)\le(K_2,i_2) \mbox{ iff }K_2|_fK_1,\,L|_fK_2\mbox{ and }i_2|_{k_1}=i_1,$$
we obtain that every chain has an upper bound (the superfield obtained by directed union). Then we are under the hypothesis of Zorn's Lemma and there exists a maximal element $(N,i)\in\Sigma$.

We just have to show $N=L$. Pick $\alpha\in L$, so $\alpha$ is algebraic over $N$ (as it is algebraic over $F$). We have $N(\alpha)|_fN$ and $\overline F|_fN(\alpha)$. In other words, the inclusion map $i:N(\alpha)\rightarrow\overline F$ is a full $N$-embedding. By maximality of $N$ we get $N(\alpha)=N$ for all $\alpha\in L$, which imply $N=L$.
\end{proof}

\begin{lem}\label{uniq2}
 Let $F$ be a superfield and $\overline{F}$ be some full algebraic closure of $F$. If $\phi:\overline{F}\rightarrow\overline F$ is a $F$-embedding then $\phi$ is an isomorphism.
\end{lem}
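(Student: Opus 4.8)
The plan is to adapt the classical field-theoretic argument: show that $\phi(\overline F)$ is an algebraically closed sub-superfield of $\overline F$ containing $F$ over which $\overline F$ is a full algebraic extension, and then invoke Lemma \ref{lemuinq1} to force that extension to be trivial, so that $\phi$ is surjective.

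First I would set $L:=\phi(\overline F)$. Since $\phi$ is a full embedding, $L$ is closed under the multioperations of $\overline F$, the inclusion $L\hookrightarrow\overline F$ is a full morphism, and $\phi\colon\overline F\to L$ is a full isomorphism of superfields; moreover $F\subseteq L$ because $\phi$ fixes $F$ pointwise. Next I would check that $L$ is algebraically closed: given a non-constant $h=c_0+c_1X+\dots+c_mX^m\in L[X]$, write $c_i=\phi(d_i)$ (with $d_m\ne 0$ by injectivity) and set $g=d_0+d_1X+\dots+d_mX^m\in\overline F[X]$, which is non-constant; as $\overline F$ is algebraically closed, $g$ has a root $\beta\in\overline F$, and applying $\phi$ while using that full morphisms commute with polynomial evaluation (Lemma \ref{factstrong2}(d)) yields $0\in ev(\phi(\beta),h,L)$, so $\phi(\beta)$ is a root of $h$ in $L$.

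Then I would verify that $\overline F\,|_f\,L$ is algebraic: each $\alpha\in\overline F$ is $\overline F$-algebraic over $F$, i.e.\ $0\in ev(\alpha,g,\overline F)$ for some $g\in F[X]$, and since $F[X]\subseteq L[X]$ the very same $g$ witnesses that $\alpha$ is $\overline F$-algebraic over $L$. Now $L$ is a full algebraic closure of itself (it is algebraically closed, and $L\,|_f\,L$ is trivially algebraic), so by Lemma \ref{lemuinq1} the extension $L\,|_f\,L$ is a maximal full algebraic extension; since $\overline F\,|_f\,L$ is a full algebraic extension with $L\subseteq\overline F$, maximality forces $\overline F=L=\phi(\overline F)$. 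Hence $\phi$ is a bijective full morphism of superfields, and the inverse of a bijective full morphism is again a (full) morphism — the identities $\phi(a+b)=\phi(a)+\phi(b)$ and $\phi(a\cdot b)=\phi(a)\cdot\phi(b)$ transfer verbatim to $\phi^{-1}$ using injectivity — so $\phi$ is an isomorphism.

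The step requiring the most care is the passage from $\overline F$ to $L=\phi(\overline F)$: both that $L$ inherits the structure of an \emph{algebraically closed} superfield and that ``$\overline F$-algebraic over $F$'' upgrades to ``$\overline F$-algebraic over $L$'' rest essentially on $\phi$ being \emph{full} rather than merely a morphism, together with the compatibility of full morphisms with evaluation of polynomials; with a non-full morphism one would obtain only inclusions between the relevant sum--product sets and the argument would not close.
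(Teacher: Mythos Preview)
Your proof is correct and follows essentially the same strategy as the paper's: set $L=\phi(\overline F)$, argue that $L$ is algebraically closed and that $\overline F\,|_f\,L$ is a full algebraic extension, and then invoke Lemma~\ref{lemuinq1} to force $\overline F=L$. If anything, your write-up is more careful than the paper's, since you explicitly verify that \emph{every} non-constant polynomial over $L$ (not just those over $F$) has a root in $L$, using the full isomorphism $\phi\colon\overline F\to L$ together with Lemma~\ref{factstrong2}(d).
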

\begin{proof}
 We only need to show that $\phi$ is surjective. Let $\gamma\in\overline F$. Then there exist $p(X)\in F[X]$, saying $p(X)=X^n+a_{n-1}X^{n-1}+...+a_1X+a_0$ with $0\in p(\gamma)$. Since $\phi$ is a $F$-embedding, we have
 $$p^\phi(X):=X^n+\phi(a_{n-1})X^{n-1}+...+\phi(a_1)X+\phi(a_0)=X^n+a_{n-1}X^{n-1}+...+a_1X+a_0=p(X).$$
 Then $\phi(\gamma)$ is a root of $p(X)$ because
 \begin{align*}
   &0\in a_n\gamma^n+a_{n-1}\gamma^{n-1}+...+a_1\gamma+a_0\Rightarrow
   \phi(0)\in \phi(a_n\gamma^n+a_{n-1}\gamma^{n-1}+...+a_1\gamma+a_0)\Rightarrow \\
   &0\in a_n\phi(\gamma)^n+a_{n-1}\phi(\gamma)^{n-1}+...+a_1\phi(\gamma)+a_0.
 \end{align*}
 
 Since $\phi$ is a full embedding, we have a full embedding $\phi(\overline{F})\hookrightarrow\overline{F}$. Then $\overline{F}|_f\phi(\overline{F})$. Since $\overline F$ is algebraically closed, every non-constant polynomial $p(X)\in F[X]$ has a root $\gamma\in\overline F$, and then, a root $\phi(\gamma)\in \phi(\overline{F})$. If $\phi(\overline{F})\ne \overline F$, we have a contradiction with the maximality of $\phi(\overline{F})$ obtained in Lemma \ref{lemuinq1}.
\end{proof}

\begin{proof}[Proof of Theorem \ref{uniq}]
By Lemma \ref{uniq1} applied to $L=K_1$ and $\overline F=K_2$ (a full algebraic closed superfield equipped with a structure of algebraic extension of $F$), there exists a $F$-embedding $i_1:K_1\rightarrow K_2$. By the very same argument, there also exists a $F$-embedding $i_2:K_2\rightarrow K_1$. Moreover, $i_1\circ i_2:K_1\rightarrow K_1$ and $i_2\circ i_1:K_2\rightarrow K_2$ are $F$-embeddings. By Lemma \ref{uniq2}, both $i_1\circ i_2$ and $i_2\circ i_1$ are isomorphisms, implying that $i_1$ and $i_2$ are also isomorphisms.
\end{proof}

\begin{ex}\label{ext2ex}
Lets look at $H_3$ again. Consider $L_1=H_3\times_hH_5$ and $L_2=H_3\times_hH_7$. We do not know precisely the relations between the full algebraic closures $\overline{H_3}$, $\overline{L_1}$ and $\overline{L_2}$.

Of course, since $L_1|H_3$ and $L_2|H_3$ are algebraic extensions of $H_3$, we have that $\overline{L_1}$ and $\overline{L_2}$ are algebraic closures of $\overline{H_3}$. Since $L_2$ is an algebraic extension of $L_1$, we know that $\overline{L_2}$ is an algebraic closure of $L_1$. But we do not know if $\overline{H_3}$, $\overline{L_1}$ and $\overline{L_2}$ are isomorphic (or not). 
\end{ex}

\bibliographystyle{plain}
\bibliography{one_for_all.bib}
\end{document}